\title{$\Sigma^1_3$ sets in the Sacks model}
\author{Jonathan Schilhan}
\email{jonathan.schilhan@univie.ac.at}
\urladdr{http://www.logic.univie.ac.at/~jschilhan/}
\address{University of Vienna,
Institute of Mathematics,
Kurt Gödel Research Center,
Kolingasse 14-16,
1090 Vienna,
Austria}
\date{\today}
\subjclass[2020]{03E15, 03E35}
\keywords{Sacks forcing, Mansfield-Solovay, Marczewski measurable, projective sets}
    \DeclareMathOperator{\Ss}{\mathbb{S}}
    \DeclareMathOperator{\OD}{OD}
    \DeclareMathOperator{\cf}{cf}
\newcommand{\axiom}[1]{\mathsf{#1}}
\newcommand{\CH}{\axiom{CH}}
\newcommand{\MS}{\axiom{MS}}
\newcommand{\ZF}{\axiom{ZF}}
\newcommand{\WO}{\axiom{WO}}
\theoremstyle{plain}
\newtheorem{thm}{Theorem}[section]
\newtheorem{lemma}[thm]{Lemma}
\newtheorem{prop}[thm]{Proposition}
\newtheorem{cor}[thm]{Corollary}
\newtheorem{claim}[thm]{Claim}
\newtheorem{fact}[thm]{Fact}
\theoremstyle{definition}
\newtheorem{definition}[thm]{Definition}
\newtheorem{remark}[thm]{Remark}
\newtheorem{quest}[thm]{Question}
\begin{document}

\begin{abstract}
    We show that in the iterated Sacks model over the constructible universe the Mansfield-Solovay Theorem holds for $\Sigma^1_3$ sets. In particular, every $\mathbf{\Sigma}^1_3$ set is Marczewski measurable and the optimal complexity for a Bernstein set is $\Delta^1_4$. Based on a result by Kanovei, we also briefly show how to separate the Mansfield-Solovay Theorem at non-trivial levels of the projective hierarchy.
\end{abstract}

\maketitle

\section{Introduction}

For the purposes of this article a \emph{pointclass} will be a collection $\Gamma$ of subsets of the Polish spaces $(\omega^\omega)^n$ for $n \geq 1$. Whenever $B \subseteq (\omega^\omega)^{n+1}$ for some $n \geq 1$ and $x \in \omega^\omega$, the $x$'th section of $B$ is the set $$B_x = \{ (y_0, \dots, y_{n-1}) \in (\omega^\omega)^n : (x,y_0, \dots, y_{n-1}) \in B \}.$$ To any pointclass $\Gamma$ and $x \in \omega^\omega$ we associate the \emph{parameterized pointclass} $\Gamma(x) = \{ B_x  : B \in \Gamma\}$ and the \emph{boldface pointclass} $\mathbf{\Gamma} = \bigcup_{x \in \omega^\omega} \Gamma(x)$. We will be mainly interested in the projective classes  $\Sigma^1_n$, $\Pi^1_n$ and $\Delta^1_n$ for $n = 1, 2, 3, 4$. For a more general treatment and more information about pointclasses we refer to \cite{Moschovakis2009}.

\begin{definition}
    Let $\Gamma$ be a pointclass. Then the Mansfield-Solovay Theorem (MST) for $\Gamma$ is denoted $\MS(\Gamma)$ and says that for every $r \in \omega^\omega$ and $A \in \Gamma(r)$, at least one of the following is satisfied: 

    \begin{enumerate}
        \item $A \subseteq L[r]$.
        \item $A$ contains a (non-empty) perfect subset. 
    \end{enumerate}
\end{definition}

Recall that a perfect subset of a topological space is a closed set without isolated points. As usual in the literature, we will always consider perfect sets to additionally be non-empty, whence the brackets in (2) above.

The classical Mansfield-Solovay Theorem (see e.g. \cite[Theorem 25.23]{Jech}), a major cornerstone of descriptive inner-model theory, concerns the pointclass $\Sigma^1_2$ and can be expressed as follows.

\begin{thm}[Mansfield-Solovay]
$\MS(\Sigma^1_2)$ holds in $\ZF$.
\end{thm}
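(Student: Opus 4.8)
The plan is to combine the Shoenfield tree representation of $\Sigma^1_2$ sets with a Cantor--Bendixson-style derivative argument due to Mansfield. Fix $r \in \omega^\omega$ and a $\Sigma^1_2(r)$ set $A$. By the normal form for $\Pi^1_1$, there is a family $(U_{x,y})_{x,y \in \omega^\omega}$ of trees on $\omega$, recursive in $(x,y,r)$, such that $A = \{x : \exists y\ (U_{x,y} \text{ is well-founded})\}$. The first step is to build from this a tree $T$ on $\omega \times \kappa$, for $\kappa$ any ordinal $\geq \omega_1$, whose infinite branches code a triple $(x,y,\pi)$ with $\pi$ a rank function witnessing the well-foundedness of $U_{x,y}$; then $A = p[T]$. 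The crucial bookkeeping point is that $T$ is $\Delta_0$-definable from $r$ and $\kappa$, so $T \in L[r]$, and that the identity $A = p[T]$ survives passage to $V$: a countable tree on $\omega$ is well-founded iff it admits a rank function into $\omega_1$, so taking ordinals up to $\omega_1^V$ (which $L[r]$ contains, together with the definition of $T$) changes nothing.

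The second step is the derivative. Working inside $L[r]$, define $d(T) = \{(s,h) \in T : \text{there are } (s_0,h_0),(s_1,h_1) \in T \text{ extending } (s,h) \text{ with } s_0 \perp s_1\}$. This operation is $\subseteq$-monotone and deflationary, and each $d^\alpha(T)$ is downward closed, so the transfinite iterates form a decreasing sequence stabilising at a fixed point $T^\infty$; since $T \in L[r]$, both the iteration and the value $T^\infty$ are absolute between $L[r]$ and $V$. I then split into two cases. If $T^\infty \neq \emptyset$, every node of $T^\infty$ has two extensions in $T^\infty$ that split in the $\omega^\omega$-coordinate, so by a routine recursion one embeds $2^{<\omega}$ into $T^\infty$ and reads off a continuous injection $2^\omega \to [T^\infty]$ whose first-coordinate projection is still injective; its range is a perfect subset of $p[T^\infty] \subseteq p[T] = A$, giving alternative (2). (These choices are made canonically from a fixed well-ordering of $\omega^{<\omega} \times \kappa^{<\omega}$, so no appeal to $\AC$ is needed, and the argument goes through in $\ZF$.)

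If instead $T^\infty = \emptyset$, then every node of $T$ is deleted at some stage. Given $x \in p[T]$ with witness $(x,h) \in [T]$, the deletion stage of $(x \restriction n, h \restriction n)$ is non-increasing in $n$ (since the $d^\alpha(T)$ are downward closed), hence eventually constant, equal to some $\alpha$, say for $n \geq N$. Then $(x,h)$ is a branch of $d^\alpha(T)$ through $(x \restriction N, h \restriction N)$, and because $(x \restriction N, h \restriction N) \notin d^{\alpha+1}(T)$, the first coordinates of all nodes of $d^\alpha(T)$ lying above it are pairwise comparable; their union must therefore be $x$. Thus $x$ is definable in $L[r]$ from $d^\alpha(T)$, the ordinal $\alpha$, the integer $N$, and the finite sequence $(x \restriction N, h \restriction N)$, all of which lie in $L[r]$; hence $x \in L[r]$. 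So $A = p[T] \subseteq L[r]$, which is alternative (1).

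I expect the main obstacle to be the first step: making sure the Shoenfield tree can genuinely be taken to be an element of $L[r]$ while still projecting onto all of $A$ as computed in $V$. The resolution is precisely the observation above: one lets the second coordinate of $T$ range over $\omega_1^V$ (or over $\Ord$, truncating at will), which is harmless since rank functions into any ordinal $\geq \omega_1$ already detect well-foundedness, and $L[r]$ contains every ordinal together with the $\Delta_0$ definition of $T$. Everything after that — the monotonicity and absoluteness of the derivative, the perfect-tree construction, and the definability computation in the case $T^\infty = \emptyset$ — is routine.
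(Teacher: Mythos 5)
The paper does not prove this statement: it is quoted as the classical Mansfield--Solovay Theorem with a citation to Jech (Theorem 25.23), so there is no in-paper argument to compare against. Your proof is the standard one from the literature --- Shoenfield's tree representation of $\Sigma^1_2$ sets over $L[r]$ followed by Mansfield's splitting derivative, with the two cases $T^\infty\neq\emptyset$ and $T^\infty=\emptyset$ yielding alternatives (2) and (1) respectively --- and it is correct, including the points needed to keep it in $\ZF$ (canonical choices via a well-ordering of the finite sequences, and the fact that rank functions into $\omega_1$ suffice to detect well-foundedness of countable trees).
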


Of course, in the constructible universe itself the MST is trivial and holds for the pointclass of all sets of reals. On the other hand, whenever $r \notin L$, $\{r\}$ neither has a perfect subset, nor is a subset of $L$ and this extreme end of the MST must fail. This is not exactly fair, as in choosing the pointclass of all sets, we have simply disregarded $r$ as a parameter in the definition of its singleton. 

Rather, we should consider some notion of a \emph{lightface pointclass} for the theorem to be useful. Still, it is in general not hard to obtain strict forcing extensions of $L$ in which $\MS(\OD)$ holds, where $\OD$ is the pointclass of all ordinal definable sets of reals. For instance, this is the case after adding uncountably many Cohen reals.\footnote{Use the homogeneity of Cohen forcing and the fact that it adds a perfect set of Cohen reals.} On the other hand, we do not know what happens after adding a single Cohen real (see Question~\ref{q:Cohen}). $\OD$ is in a sense the largest sensible lightface pointclass.

In the other direction, note that when $0^\sharp$ exists, necessarily $\MS(\Pi^1_2)$ must fail, as $\{0^\sharp\}$ is $\Pi^1_2$ but not contained in $L$. Moreover, this persists through any larger model (with the same ordinals). For this reason, it makes sense in general to consider a refinement of the MST relative to any ``ground model", not necessarily equal to $L$. This will be useful to state our results in a slightly more general form.

\begin{definition}
    Let $\Gamma$ be a pointclass, $M$ an inner model. We write $\MS(\Gamma, M)$ to say that for any $r \in \omega^\omega$ and $A \in \Gamma(r)$, one of the following is satisfied: 

    \begin{enumerate}
        \item $A \subseteq M[r]$.
        \item $A$ contains a perfect set.
    \end{enumerate}
\end{definition}

Even without the use of large cardinals, it is possible to have non-constructible $\Pi^1_2$ singletons and thus obtain a failure of $\MS(\Pi^1_2)$, as shown by Jensen (see \cite{Jensen1970}). Yet, there is an even simpler way towards the same result: add any number of random reals over $L$. Being a random real over $L$ is a $\Pi^1_2$ property. On the other hand, random forcing does not add a perfect set of random reals (see \cite{Bartoszynski1990}).

The strong discrepancy between the Cohen and the random models begs the question of what interesting intermediate results we can obtain in other well-known forcing extensions. 

The focus of our paper is therefore on the \emph{Sacks model}, or rather the several \emph{iterated Sacks models}, which are obtained using countable support iterations of \emph{Sacks forcing} (see Definition~\ref{def:Sacksforcing}). These models are interesting for several reasons. On the one hand, the iteration of length $\omega_2$ (over a model of $\CH$) provides a more or less canonical model for $\neg\CH$ in which many of the cardinal invariants of the continuum simultaneously take value $\aleph_1$. This is somewhat confirmed by Zapletal's theory of \emph{tame invariants}, see \cite{Zapletal2008}. On the other hand, they have been intensely studied for their interesting real degree structure, foremost by Groszek \cite{Groszek1988}, also Kanovei \cite{Kanovei1999} (more on that in Section~\ref{sec:Sacks}). Budinas \cite{Budinas1983} has shown that in the Sacks model over $L$ every analytic equivalence relation has a $\mathbf{\Delta}^1_2$ transversal, that is, a set with exactly one element from each equivalence class. This indicates how the Sacks model is also worth studying for its descriptive properties. In \cite{Schilhan2022a}, we succeeded to extend Budinas' result to $\mathbf{\Delta}^1_2$ maximal independent sets in analytic hypergraphs. 

The following is our main result.

\begin{thm}\label{thm:main}
    Let $\mathbb{P}$ be a countable support iteration of Sacks forcing of length $\lambda$, where $\lambda = 1$ or $\lambda$ is a limit of uncountable cofinality. Then $V^\mathbb{P} \models \MS(\Sigma^1_3, V)$.
\end{thm}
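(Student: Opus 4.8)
The plan is to follow the blueprint of the classical Mansfield--Solovay argument, which analyses a $\Sigma^1_2$ set via the tree of attempts to build an element of it (the Mansfield--Solovay tree on $\omega \times \omega_1$), with the crucial distinction between a perfect subtree and a wellfounded-ish ``thin'' tree coded inside $L$. For $\Sigma^1_3$ the right analogue is to relativise everything one level up: a $\Sigma^1_3(r)$ set $A$ can be written as $\{ x : \exists y\, \varphi(x,y,r) \}$ with $\varphi \in \Pi^1_2$, and $\Pi^1_2$ truth is $\Sigma_1$-definable over an appropriate admissible/closure structure (think of $\Pi^1_2$ as ``$\Sigma_1$ over $L_\alpha[z]$ for suitable $\alpha$'', via the Shoenfield-style tree for $\Pi^1_2$, i.e. the tree on $\omega \times \omega \times \omega_1$ coming from the Shoenfield representation of the inner $\Sigma^1_2$ matrix, with an extra $\omega_1$-component tracking the $\Pi^1_2$ quantifier). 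So I would build a tree $T$ on $\omega \times (\omega_1)^{<\omega} \times \cdots$ whose branches give $(x,y)$ together with a witness that $\varphi(x,y,r)$ holds, and then run a Cantor--Bendixson / Mansfield-style perfect set analysis on $T$: either $T$ has a perfect set of branches (giving a perfect subset of $A$), or the set of $x$'s arising from $T$ is, by a condensation argument, contained in $L[r]$ together with a real parameter coding the ``thin kernel'' of $T$.

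The genuinely new input — and this is why the hypothesis on $\mathbb{P}$ matters — is that in the iterated Sacks model this parameter coding the thin part can itself be absorbed into $L[r]$. Concretely, I expect the argument to go through the following steps. First, reduce to the case $\lambda = 1$, i.e. a single Sacks real $s$ over $L$: for the limit case one uses a reflection/factoring argument, observing that any $\Sigma^1_3$ instance with parameter $r$ already lives in an initial segment $V^{\mathbb{P}_\xi}$ (since $r$ is added at some stage $<\lambda$, using $\mathrm{cf}(\lambda) > \omega$), and that the tail iteration is again a countable support Sacks iteration which is sufficiently homogeneous/Sacks-indestructible that it adds no new $\Sigma^1_3$-counterexamples — this is where one needs the Sacks-specific preservation facts referenced in Section~\ref{sec:Sacks}. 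Second, in $L[s]$, analyse the $\Sigma^1_3(r)$ set $A$ via the tree $T$ as above; the point is that $\omega_1^{L[s]} = \omega_1^L$ (Sacks forcing is proper and $\omega_1$-preserving, in fact has the Sacks property), so $T$ is a tree on $\omega \times (\omega_1^L)^{<\omega}$ and the Mansfield dichotomy applies with the ordinal heights computed correctly. Third, in the ``thin'' case, the perfect-set-free part of $A$ is captured by a $\Sigma_1$-over-$L_{\omega_1^L}[r,s]$ analysis, so every element is in $L_{\omega_1^L}[r,s]$; the remaining task is to show this forces the element into $L[r]$, i.e. that $s$ is redundant — here one invokes the key structural feature of the Sacks extension, namely the \emph{minimality} of the Sacks real and Groszek/Kanovei-style results on the degree structure (the Sacks real is the unique real of its degree, and more to the point, $\Sigma^1_3$-definable reals in $L[s]$ cannot ``see'' $s$ unless they compute it), to conclude that any such element lies in $L[r]$.

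The main obstacle, I expect, is exactly this last point: ruling out that the thin part of $A$ contains a real that genuinely uses $s$ but is not $\{s\}$ itself. The classical Mansfield argument has nothing to prove here because there is no $s$; the novelty is that we must exploit that $s$ is Sacks-generic over $L[r]$ and that Sacks forcing is, in a precise $\Sigma^1_3$-correct sense, ``too thin'' to produce new definable reals. I anticipate this requires a genericity/fusion argument: given a name $\dot{x}$ for a real and a condition forcing $\dot{x} \in A$ but $\dot{x} \notin L[r]$, one builds a perfect set of mutually Sacks-generic reals (Sacks forcing adds a perfect set of ... no — rather, one uses that the iteration/product of Sacks forcing still yields, by fusion, continuum-many pairwise generic reals over $L[r]$ below any condition), and via $\Sigma^1_3$-absoluteness between the generic extensions one reads off a perfect subset of $A$ directly, contradicting thinness. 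So the dichotomy is really ``perfect subset'' versus ``the whole analysis collapses into $L[r]$'', and the engine making the non-perfect side land in $L[r]$ rather than merely in $L[r,s]$ is a fusion argument producing perfect antichains of Sacks reals. A secondary, more bookkeeping-type obstacle is getting the tree representation of $\Pi^1_2$ set up so that the relevant ordinals are genuinely below $\omega_1$ (rather than $\omega_2$), which is what makes the condensation argument give containment in $L[r]$ and not something larger; I would handle this by working with $\Pi^1_2$ as $\Sigma_1$-over the minimal admissible set containing the parameters and its reals, using $\Sigma^1_2$-absoluteness of the Shoenfield tree's wellfoundedness.
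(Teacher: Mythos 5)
Your closing paragraph has the right instinct --- the engine really is a perfect set of mutually generic reals plus absoluteness, not a Mansfield--Solovay tree with a Cantor--Bendixson/condensation analysis --- but the architecture you build around it has a gap that would sink the argument: the reduction to $\lambda=1$. In the full model $V[\bar s]$ a $\Sigma^1_3(r)$ set can contain elements whose $V$-degree is $[\bar s\restriction\alpha]_V$ for an arbitrary countable $\alpha$, and such elements simply do not exist in $V[s_0]$. For instance, over $L$ the $\Delta^1_3$ set of reals of the form $x_0\oplus x_1$ with $0<_L x_0<_L x_1$ is empty in $L[s_0]$ but non-empty and not contained in $L$ in the full extension; your factoring step would learn nothing about it from the single-Sacks-real model. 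The correct reflection is down to the first $\omega_1$ stages, and even that is not a soft homogeneity remark: one must transfer a condition with countable support $C\subseteq\lambda$ to one supported on $\otp(C)<\omega_1$ and verify that $\Pi^1_2$ forcing statements are invariant under this transfer. No further reflection to length $1$ is possible, so the minimality of a single Sacks real (which does settle the case $\lambda=1$, even for $\MS(\OD,V)$) has no purchase when $\lambda>1$: the extension is then very far from minimal.

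The step you flag as the main obstacle is also under-specified in a way that matters. After reflecting to $\omega_1$ and locating $x\in A\setminus V$ with $[x]_V=[\bar s\restriction\alpha]_V$ for some countable, possibly limit, $\alpha$, one needs a perfect set, \emph{constructed inside $V[\bar s]$}, of sequences that are $\mathbb{S}^{*\alpha}$-generic over $V$ and lie in a prescribed condition. For $\alpha=1$ this is the fusion fact you cite, but for limit $\alpha$ it rests on showing that a sequence of length $\alpha$ all of whose proper initial segments are generic for the corresponding shorter iterations is itself generic --- a statement that fails at successor lengths and whose proof is the real technical content of the theorem. Finally, to see that each member $x'$ of the resulting perfect set belongs to $A$ as computed in $V[\bar s]$, one needs the assertion ``some condition of the $\omega_1$-length iteration over $V[x']$ forces $\exists y\,\varphi(\check{x'},y)$'' to itself be a $\Pi^1_2$ property of $x'$ and of codes for the condition and the continuous name, so that it propagates along the perfect set by Shoenfield absoluteness and can then be realized using the tail of the actual iteration (this is where $\cf(\lambda)>\omega$ is used a second time). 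Your Shoenfield-tree/admissible-set framework does not supply this definability of the forcing relation, and without it the ``thin part lands in $L[r]$'' half of your dichotomy does not close.
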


Particularly, when forcing over the constructible universe, we obtain:

\begin{thm}\label{thm:mainL}
(V=L) Let $\mathbb{P}$ be a countable support iteration of Sacks forcing of length $\lambda$. Then $V^\mathbb{P} \models \MS(\Sigma^1_3)$ exactly when $\cf(\lambda) > \omega$ or $\lambda = 1$. If $\lambda > 1$, then $V^\mathbb{P} \models\neg \MS(\Pi^1_3)$.
\end{thm}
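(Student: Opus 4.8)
The plan is to prove Theorem~\ref{thm:mainL} by reducing everything to Theorem~\ref{thm:main} plus two additional ingredients: a positive direction handling $\lambda = 1$, and a negative direction handling $1 < \lambda$ with $\cf(\lambda) = \omega$ as well as the $\Pi^1_3$ failure in all cases with $\lambda > 1$. For the positive direction, suppose $V = L$ and $\cf(\lambda) > \omega$ or $\lambda = 1$. Then Theorem~\ref{thm:main} gives $V^{\mathbb P} \models \MS(\Sigma^1_3, L)$, and since we are forcing over $L$, the ground model inner model $V = L$ is just $L$ as computed in the extension (Sacks forcing does not add ordinals and $L$ is absolute), so $L[r] = (L[r])^{V^{\mathbb P}}$ and $\MS(\Sigma^1_3, V) = \MS(\Sigma^1_3)$ in $V^{\mathbb P}$. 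This is essentially bookkeeping.

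The substantive new content is the negative direction. First I would treat the $\Pi^1_3$ failure. If $\lambda > 1$, then $\mathbb P$ adds at least one Sacks real $s$ over $L$; I claim that the statement ``$x$ is a real not in $L$'' — equivalently the singleton of a suitably chosen non-constructible real — can be witnessed by a $\Pi^1_3$ (indeed a $\Delta^1_3$) singleton in $V^{\mathbb P}$. The standard mechanism is the following: in the iterated Sacks model the set of reals is contained in $L[G]$ where $G$ is the generic, and by the usual Sacks-model degree analysis (Groszek, Kanovei) there is a real coding a cofinal sequence of ground-model-reals-in-stages, or more simply one uses that $\omega_1^L = \omega_1$ and the first Sacks real $s_0$ has a $\Sigma^1_2(s_0)$-good wellorder of $L[s_0]$'s reals, making $\{s_0\}$ itself not necessarily $\Pi^1_3$ — so instead I would invoke the sharper fact, available in this model, that there is a $\Pi^1_3$ (lightface) real $z \notin L$. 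Concretely: code by $z$ the $<_L$-least real which is not in $L[s \restriction \text{something}]$, or use that the theory ``$\exists$ a non-constructible real'' reflected through the canonical iteration gives a $\Pi^1_2$-singleton-like object one level up. The cleanest route is probably: $z$ = the unique real such that ``$z \notin L$ and $z$ is $\Sigma^1_3$-minimal'', using that in $L$ there is a $\Sigma^1_3$ wellordering relevant to the extension; then $\{z\}$ is $\Pi^1_3$ (or $\Delta^1_3$) but $z \notin L$, so condition (1) of $\MS(\Pi^1_3)$ fails for $A = \{z\}$ with parameter $\emptyset$, and $\{z\}$ has no perfect subset, so condition (2) fails. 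Hence $\neg\MS(\Pi^1_3)$.

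For the remaining positive-to-negative gap, namely $V = L$, $1 < \lambda$, $\cf(\lambda) = \omega$, I need to show $\neg\MS(\Sigma^1_3)$ in $V^{\mathbb P}$. Here the idea is that when $\cf(\lambda) = \omega$, the iteration is ``captured'' by a countable cofinal subsequence, and one can build a $\Sigma^1_3$ set $A$ (with no parameter, or with a parameter in $L$) consisting of a whole family of non-constructible reals — for instance the set of all reals $x$ such that $x$ codes, in a $\Sigma^1_3$-checkable way, a generic for an initial segment of the iteration of countable length — and argue (a) $A \not\subseteq L$ since it contains Sacks reals, and (b) $A$ has no perfect subset because it is, up to a Borel injection, contained in the set of such generics, which by a fusion/Sacks-property argument is thin (every perfect set of reals in $V^{\mathbb P}$ meets a ground-model-coded perfect set, contradicting genericity of a perfect set of mutually generic-like reals). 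The $\cf(\lambda) = \omega$ hypothesis is exactly what makes ``coding a cofinal generic'' a $\Sigma^1_3$ rather than higher-complexity task, paralleling why Theorem~\ref{thm:main} needs uncountable cofinality.

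The main obstacle I anticipate is the $\cf(\lambda) = \omega$ failure of $\MS(\Sigma^1_3)$: getting the complexity of the witnessing set down to exactly $\Sigma^1_3$ requires a delicate analysis of how genericity over $L$ for a countable-support iteration of countable length can be expressed, and one must carefully exploit that over $L$ there is a $\Sigma^1_3$ (or $\Sigma^1_2$-in-a-real) way to recognize the canonical names and the forcing relation for Sacks forcing; matching this against the requirement that the resulting set be perfect-set-free (a fusion argument showing no perfect set of such generics exists in $V^{\mathbb P}$) is where the real work lies. The $\Pi^1_3$ failure, by contrast, should follow fairly directly from the existence in this model of a lightface non-constructible real of the appropriate complexity, which is by now folklore in the Sacks-model literature (cf. the references to Groszek and Kanovei in the excerpt).
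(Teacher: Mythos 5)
Your positive direction (reduce to Theorem~\ref{thm:main} and note that $L$ and $L[r]$ are absolute to the extension) matches the paper and is fine. Both negative directions, however, have genuine gaps. For the failure of $\MS(\Pi^1_3)$ when $\lambda>1$, your plan is to exhibit a lightface $\Pi^1_3$ (or $\Delta^1_3$) \emph{singleton} $\{z\}$ with $z\notin L$. This cannot work here: the countable support iteration of Sacks forcing is homogeneous (the paper uses exactly this in the proof of Theorem~\ref{thm:ref13}), so every ordinal-definable real of the extension already lies in $L$; there is no lightface-definable non-constructible real of any complexity, and the ``folklore'' fact you appeal to is a feature of Jensen-style forcings, not of the Sacks model. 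The paper's witness is instead a non-singleton set: the $\Pi^1_3$ set $A$ of reals that are $L$-minimal (the degree of $s_0$). The real content is showing $A$ has no perfect subset, which the paper does by passing to a further Sacks extension $L[\bar s][s_\lambda]$: a perfect $P\subseteq A$ coded in $L[\bar s]$ would acquire a new branch $x$ of top degree there, forcing $s_1\leq_L x$ by the degree structure, while Shoenfield absoluteness of ``$\forall x\in P\,(s_1\not\leq_L x)$'' says otherwise. (Alternatively one quotes Groszek--Slaman.) Your proposal contains no substitute for this step.

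For the case $1<\lambda$, $\cf(\lambda)=\omega$, your candidate $\Sigma^1_3$ set --- reals coding generics for countable initial segments of the iteration --- fails for a reason internal to the paper: Proposition~\ref{prop:psofgen} shows that the extension \emph{does} contain perfect sets of $\mathbb{S}^{*\alpha}$-generics over $L$ for countable $\alpha$, so the ``fusion argument showing no perfect set of such generics exists'' that you hope for is false, and your set would in fact have a perfect subset. The paper's argument is both different and simpler: by Fact~\ref{fact:degrees}, whenever $\lambda>1$ is not a limit of uncountable cofinality there is a maximal $L$-degree $[a]_L$ in the extension, and the set of reals of \emph{non-maximal} degree is $\Sigma^1_3$, not contained in $L$, and has no perfect subset by the same further-extension-plus-Shoenfield argument applied to ``$a\not\leq_L x$''. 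You would need to identify the maximal degree as the relevant structural feature of the $\cf(\lambda)=\omega$ case and supply the no-perfect-subset mechanism before the proof is complete.
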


Let us mention that the original motivation for this result, comes from a slightly different angle than the Mansfield-Solovay Theorem directly. In \cite{Schilhan2022}, we have observed that, after iterating Sacks forcing over $L$ for $\lambda$-many steps, where $\cf(\lambda) > \omega$, there is a $\Delta^1_4$-definable \emph{Bernstein set}. Theorem~\ref{thm:mainL} confirms that this is the optimal complexity for such a set in that model. Here, recall that $A \subseteq \omega^\omega$ is a Bernstein set if neither $A$ nor $\omega^\omega \setminus A$ contains a perfect set.

\begin{lemma}
    Assume that $\MS(\Gamma)$ holds and $\mathbb{R} \not\subseteq L$. Then there is no Bernstein set in $\Gamma$. If $\mathbb{R} \not\subseteq L[r]$, for every $r \in \omega^\omega$, then there is no Bernstein set in $\mathbf\Gamma$.
\end{lemma}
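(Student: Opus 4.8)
The plan is to argue by contradiction, turning the dichotomy provided by $\MS(\Gamma)$ into a perfect subset of the complement of a putative Bernstein set. The one substantive ingredient, which I would isolate first, is the observation that if $M$ is an inner model and $x \in \omega^\omega \setminus M$, then $\omega^\omega \setminus M$ already contains a perfect set: letting $P_x = \{ y \in \omega^\omega : \forall n\ y(2n) = x(n)\}$, the set $P_x$ is perfect (it is homeomorphic to $\omega^\omega$ via $y \mapsto (y(2n+1))_{n<\omega}$), and every $y \in P_x$ computes $x$, so $y \in M$ would force $x \in M$; hence $P_x \subseteq \omega^\omega \setminus M$. This is exactly what lets one upgrade the strong conclusion ``$A \subseteq M$'' to ``$\omega^\omega \setminus A$ has a perfect subset''.

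Granting this, the first part runs as follows. Suppose $A \in \Gamma$ is Bernstein. Since $A$ contains no perfect set, applying $\MS(\Gamma)$ to $A$ with a recursive real as parameter (for which the relevant inner model is $L$ itself) gives $A \subseteq L$. Using $\mathbb{R} \not\subseteq L$, fix $x \in \omega^\omega \setminus L$; then $P_x \subseteq \omega^\omega \setminus L \subseteq \omega^\omega \setminus A$, so $\omega^\omega \setminus A$ contains a perfect set, contradicting that $A$ is Bernstein. Hence no member of $\Gamma$ is a Bernstein set.

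The second part is the same up to parameters: given a Bernstein set $A \in \mathbf\Gamma$, fix $r \in \omega^\omega$ with $A \in \Gamma(r)$; since $A$ has no perfect subset, $\MS(\Gamma)$ yields $A \subseteq L[r]$; now $\mathbb{R} \not\subseteq L[r]$ supplies $x \in \omega^\omega \setminus L[r]$, whence $P_x$ is a perfect subset of $\omega^\omega \setminus L[r] \subseteq \omega^\omega \setminus A$, again a contradiction. I do not anticipate a real obstacle; the only point that needs attention is the handling of parameters --- for lightface $A \in \Gamma$ the pertinent instance of $\MS(\Gamma)$ is the one over $L$, whereas for $A \in \mathbf\Gamma$ one is only handed some parameter $r$ and must therefore assume $\mathbb{R} \not\subseteq L[r]$ for that $r$ --- which is precisely why the boldface statement is made under the stronger hypothesis that $\mathbb{R} \not\subseteq L[r]$ for every $r \in \omega^\omega$.
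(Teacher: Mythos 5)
Your proof is correct and follows essentially the same route as the paper's: Bernstein sets have no perfect subset, so $\MS(\Gamma)$ forces $A \subseteq L$ (resp.\ $L[r]$), and a single real outside the inner model yields a perfect set disjoint from it, hence a perfect subset of the complement. The only difference is that you spell out the standard construction of that perfect set (your $P_x$), which the paper simply takes for granted.
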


\begin{proof}
As $\mathbb{R} \not\subseteq L$, there is a perfect set of non-constructible reals. Suppose $A \in \Gamma$. If $A \subseteq L$, then there is a perfect subset of $\omega^\omega \setminus A$. The other option is that $A$ contains a perfect subset. In either case, $A$ is not a Bernstein set. Similarly for $\mathbf{\Gamma}$.
\end{proof}

The Mansfield-Solovay theorem is also related to a regularity property known as \emph{Marczewski measurabilty}. This is due to Brendle and Löwe \cite{BrendleLoewe} who used the MST to show that a $\Sigma^1_2(r)$ set is Marczewski measurable exactly when $\omega^\omega \setminus L[r]$ is non-empty.  We recall that a set $A \subseteq \omega^\omega$ is Marczewski measurable if for every perfect set $P \subseteq \omega^\omega$, there is a perfect subset $P' \subseteq P$ such that either $P' \subseteq A$, or $P' \cap A = \emptyset$. With exactly the same argument as in \cite{BrendleLoewe}, one concludes: 

\begin{thm}\label{thm:marcz}
Let $\mathbb{P}$ be a countable support iteration of Sacks forcing of length a limit of uncountable cofinality. Then in $V^\mathbb{P}$, every $\mathbf{\Sigma}^1_3$ set is Marczewski measurable.
\end{thm}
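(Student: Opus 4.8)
The plan is to follow the Brendle--Löwe strategy for $\Sigma^1_2$ sets verbatim, replacing the classical Mansfield--Solovay Theorem with Theorem~\ref{thm:marcz}... wait, with Theorem~\ref{thm:main}, the relativized $\MS(\Sigma^1_3,V)$ in the iterated Sacks extension. So suppose $\mathbb{P}$ is a countable support iteration of Sacks forcing of length $\lambda$ with $\cf(\lambda)>\omega$, and work in $W := V^\mathbb{P}$. Let $A \subseteq \omega^\omega$ be $\mathbf\Sigma^1_3$, say $A \in \Sigma^1_3(r)$ for some real parameter $r \in W$, and fix an arbitrary perfect set $P \subseteq \omega^\omega$ in $W$; we must produce a perfect $P' \subseteq P$ that is either contained in $A$ or disjoint from $A$.

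First I would relativize everything to $P$. A perfect set $P$ carries a canonical homeomorphism $h_P \colon \omega^\omega \to P$ (or onto a comeager/Borel subset; more cleanly one uses the homeomorphism of $2^\omega$ onto $[T_P]$ for the tree $T_P$ of $P$) which is coded by a real $p \in W$. Then $B := h_P^{-1}[A \cap P]$ is again $\Sigma^1_3(r,p)$ as a subset of $2^\omega$ (or $\omega^\omega$), since $h_P$ and its inverse are continuous with a parameter, and projective pointclasses are closed under continuous preimages. Now apply Theorem~\ref{thm:main}: since $\MS(\Sigma^1_3,V)$ holds in $W$, either $B \subseteq V[\langle r,p\rangle]$, or $B$ contains a perfect subset $Q$. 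In the second case $h_P[Q]$ is a perfect subset of $P$ contained in $A$, and we are done. So assume $B \subseteq V[\langle r,p\rangle]$.

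The key point is then to handle the first case by a symmetric argument on the complement. The set $2^\omega \setminus B = h_P^{-1}[P \setminus A]$ is $\mathbf\Pi^1_3$, hence also $\mathbf\Sigma^1_3$ in $W$ (using, e.g., that over this model every $\mathbf\Pi^1_3$ set is $\mathbf\Sigma^1_3$ — or, more carefully, just apply Theorem~\ref{thm:main} again but now we need it for $\Pi^1_3$; however Theorem~\ref{thm:mainL} warns $\MS(\Pi^1_3)$ actually \emph{fails} in this model, so we cannot do that). Instead, the correct move, exactly as in Brendle--Löwe, is: because $B \subseteq V[\langle r,p\rangle]$ and $W$ is a proper forcing extension adding reals over $V[\langle r,p\rangle]$ — indeed a countable support Sacks iteration of length $\lambda$ with $\cf(\lambda)>\omega$ adds, cofinally often and in particular after the stage where $r,p$ appear, a Sacks real, and in fact a perfect set of mutually generic Sacks reals over $V[\langle r,p\rangle]$ — there is a perfect set $Q \subseteq 2^\omega$ in $W$ with $Q \cap V[\langle r,p\rangle] = \emptyset$. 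Then $Q' := Q \setminus B = Q$ is a perfect subset of $2^\omega \setminus B$, and $h_P[Q]$ is a perfect subset of $P$ disjoint from $A$. This completes the dichotomy, so $A$ is Marczewski measurable.

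I expect the main obstacle to be the verification that $W$ contains a perfect set of reals avoiding $V[\langle r,p\rangle]$, i.e. that the ``new reals'' from the iteration form, on some perfect set, a set disjoint from the relevant intermediate model. Since $\cf(\lambda)>\omega$, the real $\langle r,p\rangle$ appears at some stage $\alpha<\lambda$ with $\alpha$ of countable cofinality or a successor, and the remaining tail iteration (of length a limit of uncountable cofinality, or at least still adding Sacks reals) adds a Sacks real $s$ over $V[\langle r,p\rangle]$; the perfect tree of ``Sacks conditions below the generic'' — more precisely the perfect set of branches through the generic Sacks real's defining tree, or the image of $2^\omega$ under the generic's splitting structure — yields a perfect set all of whose elements are Sacks-generic, hence not in $V[\langle r,p\rangle]$, over that intermediate model. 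This is the same fact (``Sacks forcing adds a perfect set of mutually generic reals'', used for $\mathbb P$ a single Sacks step and then absorbed into the iteration) that underlies the whole enterprise and is recorded in Section~\ref{sec:Sacks}; here one just has to note it applies over the intermediate model $V[\langle r,p\rangle]$ rather than over $V$, which is routine since that model satisfies $\CH$ (or at least the hypotheses needed) and the tail of a Sacks iteration over it is again of the required form. Everything else is bookkeeping with the closure properties of $\mathbf\Sigma^1_3$ under continuous (pre)images and the transfer along $h_P$.
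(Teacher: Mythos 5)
Your proposal is correct and follows essentially the same route as the paper: apply the relativized $\MS(\Sigma^1_3,V)$ of Theorem~\ref{thm:main} to $P\cap A$ (viewed as $\Sigma^1_3(r,p)$ with $p$ a code for $P$), and in the case $P\cap A\subseteq V[r,p]$ produce a perfect subset of $P$ avoiding $V[r,p]$ via the Sacks reals added by the tail of the iteration. The only cosmetic differences are your pullback along $h_P$ instead of working with $P\cap A$ directly, and the unnecessary invocation of \emph{mutual} genericity where plain newness of the reals suffices.
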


Part of the argument for Theorem~\ref{thm:main} includes a reflection theorem for the Sacks iteration that might be of independent interest. Roughly speaking, it corresponds to the absoluteness of $\Sigma^1_3$ formulas between the full iteration and the first $\omega_1$-many steps. Moreover, when the length of the iteration has uncountable cofinality, this can be extended up to $\Sigma^1_4$. See Theorem~\ref{thm:ref13} and ~\ref{thm:ref14} and the subsequent corollaries.

While our main focus is on a particular forcing construction, it is still interesting to consider what can be said more generally about the consistency of $\MS(\Sigma^1_n) \wedge \neg\MS(\Pi^1_n)$, for other $n$. 

In \cite{Kanovei1978}, Kanovei has shown how to extend Jensen's result to obtain, for any $n \geq 2$, a forcing extension of $L$ with a $\Pi^1_n$ singleton, where every $\Sigma^1_n$ real (that is, corresponding to a $\Delta^1_n$ singleton) is constructible. Interestingly, the properties of the forcing notions defined in \cite{Kanovei1978} can directly be used to provide an answer to our question.

\begin{thm}\label{thm:kanovei}
    For every $n \geq 2$, there is a forcing extension of $L$ satisfying $\MS(\Sigma^1_n)$, but not $\MS(\Pi^1_n)$.
\end{thm}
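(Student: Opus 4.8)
The plan is to invoke Kanovei's construction from \cite{Kanovei1978} as a black box and verify that the resulting model has the two desired properties. Fix $n \geq 2$. Kanovei produces a forcing notion $\mathbb{P}$ over $L$ such that in $L^\mathbb{P}$ there is a real $a$ whose singleton $\{a\}$ is $\Pi^1_n$, while every real $x$ with a $\Delta^1_n$-definable singleton lies in $L$. The failure of $\MS(\Pi^1_n)$ is then immediate: $\{a\}$ is a $\Pi^1_n(r)$ set (with $r$ a recursive real, so lightface) which is neither contained in $L = L[r]$ — since $a \notin L$ — nor contains a perfect set, being a singleton. So the real content is the positive direction: $L^\mathbb{P} \models \MS(\Sigma^1_n)$.

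For that I would argue as follows. Let $A \in \Sigma^1_n(r)$ for some real $r$, and suppose $A$ has no perfect subset; I must show $A \subseteq L[r]$. Here I would want to combine two ingredients. First, the classical Mansfield--Solovay theorem (in the form $\MS(\Sigma^1_2)$ in $\ZF$) applied not to $A$ itself but to a suitable relativization: the point is that if $A$ is $\Sigma^1_n$ and has a new element, one can, by a Skolem-hull/absoluteness argument inside an appropriate inner model where $A$ becomes $\Sigma^1_2$ (e.g. working in $L[r]$-like models equipped with the relevant sharps or with Kanovei's coding structure), extract from $A$ either a perfect set or the conclusion that every element of $A$ is ``$\Sigma^1_n$ over $L[r]$'' in the sense of belonging to a $\Delta^1_n(r)$ singleton. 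Second, I would invoke the key feature of Kanovei's forcing: it is designed precisely so that the only $\Delta^1_n$ reals are the constructible ones (relativizing, the only $\Delta^1_n(r)$ reals are those in $L[r]$, since $\mathbb{P}$ is homogeneous enough and $r$ can be absorbed, or one re-runs the construction over $L[r]$). Putting these together, every element of $A$ is in $L[r]$, which is (1).

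The step I expect to be the main obstacle is making the first ingredient precise, i.e.\ showing that a $\Sigma^1_n$ set without a perfect subset can only contain reals that are $\Delta^1_n$ singletons over the appropriate base. In the boldface/classical setting this is exactly the Mansfield--Solovay dichotomy, but lifting it from $\Sigma^1_2$ to $\Sigma^1_n$ requires either large-cardinal-type hypotheses (sharps) or, as is appropriate here since we work over $L$, a careful use of the fine structure of Kanovei's model — in particular the $\Sigma^1_n$-correctness of certain inner models or the existence of a $\Sigma^1_n$ good wellordering-like scale on the reals that his construction provides. I would therefore look back at \cite{Kanovei1978} for the precise form of the definability/absoluteness lemmas proven there (the ones guaranteeing that $\Delta^1_n$ reals are constructible are surely proven via exactly such a correctness analysis), and reuse that same analysis to run the tree-construction of a perfect subset of $A$ whenever $A \not\subseteq L[r]$: if some $a \in A \setminus L[r]$, build a perfect tree of elements of $A$ by the usual splitting argument, using $\Sigma^1_n$-absoluteness between $V$ and the relevant inner models to keep the construction going. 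Once the absoluteness machinery is quoted correctly, the perfect-set construction itself is routine and essentially identical to the classical Mansfield tree argument.
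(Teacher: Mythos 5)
Your negative direction ($\neg\MS(\Pi^1_n)$ via the $\Pi^1_n$ singleton $\{a\}$) is fine and matches the paper. The positive direction, however, has a genuine gap, and it is exactly the step you flag as ``the main obstacle.'' Your plan reduces $\MS(\Sigma^1_n)$ to the claim that every element of a thin $\Sigma^1_n(r)$ set is a $\Delta^1_n(r)$ singleton, so that Kanovei's ``every $\Delta^1_n$ real is constructible'' finishes the job. But that intermediate claim is a basis-type theorem at level $n$ that does not follow from anything Kanovei proves and is not true in the generality you need: Kanovei's theorem is about individual \emph{definable} reals, whereas a thin $\Sigma^1_n$ set in this model typically contains reals (e.g.\ arbitrary members of the degree $[a]_L$) that are not $\Delta^1_n$ singletons at all. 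Even the classical Mansfield--Solovay argument does not show that members of a thin $\Sigma^1_2(r)$ set are $\Delta^1_2(r)$ singletons --- it shows they lie in $L[r]$ via the tree representation, a weaker and differently-proved conclusion. The appeals to ``$\Sigma^1_n$-correctness of inner models'' and ``a $\Sigma^1_n$ good wellordering'' do not supply the missing argument, and the treatment of the parameter $r$ (``$r$ can be absorbed, or one re-runs the construction'') skips the point that matters: by minimality of the extension, if $r\notin L$ then $L[r]$ is already the whole model and the statement is vacuous, so one may assume $r\in L$.

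The paper's actual mechanism is quite different and is worth internalizing. The crucial property of $\mathbb{P}(n)$ is not a correctness or basis theorem but a \emph{forcing transfer}: for $\mathbf{\Pi}^1_{n-1}$ formulas about the generic real, $\mathbb{P}(n)$-forcing coincides with \emph{Sacks} forcing (Lemma~\ref{lem:K2}). Given $x\in A\setminus L$ with witness $y$, minimality and continuous reading of reals (Lemma~\ref{lem:K1}) give continuous $f,g\in L$ with $x=f(a)$, $y=g(a)$, and some $T$ in the generic filter Sacks-forces $\varphi(f(\dot s),g(\dot s))$. One then uses the standard fact that every new branch of $T$ in a Sacks extension is itself Sacks generic to force, again in the Sacks sense, that a whole perfect set of reals satisfies $\exists y\,\varphi(\cdot,y)$; transferring back via Lemma~\ref{lem:K2} yields a perfect subset of $A$ in $L[G]$. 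In short, the proof constructs the perfect set directly rather than showing $A\subseteq L[r]$; your proposal would need to locate and use the Sacks-forcing equivalence (or an equivalent homogeneity/definability property of $\mathbb{P}(n)$) to be completed.
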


The rest of the article is organised as follows. We will start by reviewing Sacks forcing and its iterations, and introduce the vitally important notation around conditions that will dominate our arguments. Next, in Section~\ref{sec:reflection}, we provide the reflection arguments mentioned above. Section~\ref{sec:coding} introduces a simple coding mechanism that will be used at one point in the proof of the main result. Finally, Section~\ref{sec:main} is concerned with proving the main result. Theorem~\ref{thm:kanovei} above is given in Section~\ref{sec:Kanovei}. We close with some open questions.

\section{The Sacks model}\label{sec:Sacks}

Much of what we say in this section is folklore and can be found in the many references for Sacks forcing in the literature (e.g. \cite{BartoszynskiJudah1995, BaumgartnerLaver, Geschke, Jech, Judah1992}). Recall that a subset $T \subseteq 2^{<\omega}$ is a \emph{tree} if it is non-empty and for any $t \in T$ and $s \subseteq t$, also $s \in T$. It is called a \emph{perfect tree} if for any $t \in T$ there are \emph{incompatible} $t_0, t_1 \in T$ with $t \subseteq t_0, t_1$, where $t_0$ and $t_1$ are incompatible when neither $t_0 \subseteq t_1$, nor $t_1 \subseteq t_0$.

\begin{definition}\label{def:Sacksforcing}
    Sacks forcing $\mathbb{S}$ is the forcing notion consisting of perfect subtrees $T \subseteq 2^{<\omega}$, ordered by inclusion. We let $\dot s$ be a name for the generic Sacks real added by $\mathbb{S}$, namely $\dot s^G = \bigcup \bigcap G \in 2^\omega$, for a generic filter $G$.

    The countable support iteration of Sacks forcing of length $\lambda$ will be denoted $\mathbb{S}^{*\lambda}$. For each $\alpha < \lambda$, we write $\dot s_\alpha$ for a name for the Sacks real added in step $\alpha$ and let $\dot{\bar s}^\lambda$ be a name for the sequence of generic Sacks reals.
\end{definition}

As usual, we can identify the Sacks real, or in the case of the iteration, the sequence of Sacks reals,  with the generic filter. Thus we shall generally rather speak of the real $s$ or the sequence $\bar s = \langle s_\alpha : \alpha < \lambda \rangle$ being $\mathbb{S}$ or $\mathbb{S}^{*\lambda}$-generic. Whenever $s$ is a Sacks real over $V$, the generic filter is given precisely by the set of perfect trees $T \in V$ so that $s \in [T]$. Something similar can be said about the iteration, see Fact~\ref{fact:basic} below.

The first use of iterated Sacks forcing appears in \cite{BaumgartnerLaver} in which it is shown that $\omega_1$ is not collapsed. Today, this is usually phrased in terms of proper forcing. 

\begin{fact}
    Sacks forcing and its countable support iterations are proper and thus do not collapse $\omega_1$.
\end{fact}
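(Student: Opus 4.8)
The plan is threefold: first establish that $\mathbb{S}$ is proper (in fact satisfies Baumgartner's Axiom A) via a \emph{fusion} argument, then deduce that the countable support iteration $\mathbb{S}^{*\lambda}$ is proper, and finally recall that proper forcing preserves $\omega_1$.

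For the single step I would work with the \emph{splitting levels} of a perfect tree. For $T \in \mathbb{S}$ and $n < \omega$ let $\mathrm{split}_n(T)$ be the set of $n$-th splitting nodes of $T$: $\mathrm{split}_0(T)$ is the singleton containing the stem of $T$, and $\mathrm{split}_{n+1}(T)$ consists, for every $t \in \mathrm{split}_n(T)$ and $i \in 2$, of the shortest splitting node of $T$ extending $t^\frown i$. Declare $T' \leq_n T$ iff $T' \subseteq T$ and $\mathrm{split}_k(T') = \mathrm{split}_k(T)$ for all $k \leq n$. Then $\leq_0$ is just $\leq$, each $\leq_{n+1}$ refines $\leq_n$, and any \emph{fusion sequence} $T_0 \geq_0 T_1 \geq_1 T_2 \geq_2 \cdots$ has a lower bound, namely $T^\ast := \bigcap_n T_n$, which is again a perfect tree with $T^\ast \leq_n T_n$ for all $n$: the levels $\mathrm{split}_k(T_m)$ are frozen once $m \geq k$, and $T^\ast$ is simply the downward closure of the resulting skeleton of splitting nodes. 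The one substantial point is the \emph{amalgamation lemma}: for every $T$, every $n$, and every dense open $D \subseteq \mathbb{S}$, there is $T' \leq_n T$ such that for each $t \in \mathrm{split}_n(T')$ and $i \in 2$, the part of $T'$ lying on or above $t^\frown i$ belongs to $D$. I would prove this by going through the finitely many $t \in \mathrm{split}_n(T)$ one at a time and, for each $i \in 2$, replacing the part of $T$ on or above $t^\frown i$ (a perfect tree) by a perfect refinement inside $D$, using density; nothing at or below splitting level $n$ is disturbed, so the union is $\leq_n T$. Given these two lemmas, properness follows in the standard way: for a countable $M \prec H_\theta$ with $\mathbb{S} \in M$ and $T \in \mathbb{S} \cap M$, enumerate the dense open sets of $\mathbb{S}$ in $M$ as $\langle D_n : n < \omega\rangle$ and build, recursively inside $M$, a fusion sequence $T = T_0 \geq_0 T_1 \geq_1 \cdots$ with $T_{n+1} \leq_n T_n$ amalgamating $D_n$. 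Then $T^\ast = \bigcap_n T_n$ is $(M,\mathbb{S})$-generic, because a generic $G \ni T^\ast$ has $\dot s^G \in [T^\ast] \subseteq [T_{n+1}]$, this branch passes through some $t^\frown i$ with $t \in \mathrm{split}_n(T_{n+1})$, and above that node $T_{n+1}$ — hence the corresponding member of $G$ — lies in $D_n \cap M$, so $G \cap D_n \cap M \neq \emptyset$.

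For the iteration, the cleanest route is to invoke Shelah's theorem that a countable support iteration of proper forcings is proper; since every iterand of $\mathbb{S}^{*\lambda}$ is (a name for) Sacks forcing, which is proper by the above, $\mathbb{S}^{*\lambda}$ is proper. Alternatively one can give a direct proof by \emph{iterated fusion} in the spirit of \cite{BaumgartnerLaver}: for $p \geq q$ in $\mathbb{S}^{*\lambda}$, finite $F \subseteq \supp(q)$ and $n < \omega$, set $q \leq_{F,n} p$ iff $q \restriction \alpha$ forces $q(\alpha) \leq_n p(\alpha)$ for all $\alpha \in F$, and run the submodel argument coordinate by coordinate along an increasing chain of finite sets exhausting $M \cap \lambda$, simultaneously handling the countably many dense sets in $M$. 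Either way $\mathbb{S}^{*\lambda}$ is proper, and then $\omega_1$ is not collapsed: were some condition to force $\dot f \colon \omega \to \omega_1$ to be surjective, choosing a countable $M \prec H_\theta$ containing that condition, $\dot f$, $\mathbb{S}^{*\lambda}$, and an $(M,\mathbb{S}^{*\lambda})$-generic $q$ below it, $q$ would force $\dot f[\omega] \subseteq M \cap \omega_1 \in \omega_1$, a contradiction.

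The main obstacle is the amalgamation lemma at the single step — everything downstream is bookkeeping — and, on the direct route for the iteration, arranging the fusion sequence to live inside $M$ with supports $\subseteq M$, so that the diagonal union of supports is still a legitimate countable support; the essential role of countable (rather than finite) support shows up precisely in keeping properness at limit stages of countable cofinality.
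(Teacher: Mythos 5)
The paper states this as a folklore \emph{Fact} and gives no proof, pointing instead to the literature (in particular Baumgartner--Laver); your argument is precisely the standard one that those references contain, and it is correct: Axiom A for $\mathbb{S}$ via splitting levels and the amalgamation lemma, preservation of properness under countable support iteration (or direct iterated fusion), and the usual derivation that proper forcing preserves $\omega_1$. The only cosmetic slip is that with your definition $\leq_0$ requires preserving the stem, so it is not literally the ordering $\leq$ as Axiom A's convention demands; shifting the indices by one fixes this and nothing else is affected.
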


As mentioned in the introduction, the models obtained by iterating Sacks forcing are interesting for their degree structure, which we will outline in the following. 

\begin{definition}
    Let $V$ be an inner model and $x, y$ be reals. Then we write $x \leq_V y$ if $x \in V(y)$ and $x \equiv_V y$ if $x \leq_V y$ and $y \leq_V x$. We write $[x]_V$ for the set $\{ y \in \omega^\omega : y \equiv_V x \}$.
\end{definition}

Clearly $\leq_V$ is a partial order and $\equiv_V$ is an equivalence relation. We call the sets $[x]_V$, the \emph{$V$-degrees} of reals. When $V = L$, we typically speak of the \emph{degrees of constructibility}.  

\begin{fact}[Degree structure, folklore]\label{fact:degrees}
    Let $\lambda$ be an ordinal and $\bar s$ be $\mathbb{S}^{*\lambda}$-generic over $V$. Let $A$ be the set of $\alpha \leq \lambda$, that are not a limit of uncountable cofinality. For each $\alpha \in A$, let $C_\alpha \in V$ be a countable cofinal subset of $\alpha$, if $\alpha$ is limit, or $C_\alpha = \{ \alpha \}$ otherwise. 
    
    Then the $V$-degrees of reals in $V[\bar s]$ are exactly $[\bar s \restriction C_\alpha]_V$, for $\alpha \in A$, and $\bar s \restriction C_\alpha \leq_V \bar s \restriction C_\beta$, whenever $\beta \leq \alpha$. In particular, they form a well-order of type $\lambda$, if $\lambda$ is a limit of uncountable cofinality, or $\lambda +1$ otherwise. 
\end{fact}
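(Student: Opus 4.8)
The plan is to reduce the statement to two facts about (iterated) Sacks forcing, both obtained by fusion, and then to organise the conclusion by an induction on $\lambda$ using the standard factorisation of the iteration.

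The first ingredient is the single--step analysis. Sacks forcing is \emph{minimal}: $\Vdash_{\mathbb{S}}\forall x\in\omega^\omega\,(x\in\check V\vee\dot s\in\check V[x])$. Given a name $\dot x$ for a real and a condition $p$ with $p\Vdash\dot x\notin\check V$, one fuses a condition $q\leq p$ together with a choice of splitting nodes $\{t^q_\sigma:\sigma\in 2^{<\omega}\}$ so that for each $\sigma$ the conditions $q_{t^q_{\sigma^\frown 0}}$ and $q_{t^q_{\sigma^\frown 1}}$ decide incompatible initial segments of $\dot x$; then below $q$ the real $x$ recovers the branch through the splitting nodes, which is $s$, via a homeomorphism lying in $V$, so $s\in V[x]$. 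I also use the \emph{coding} refinement: below any condition $(p,\dot r)$, where $\dot r$ names a real of the ground model, there is $q\leq p$ forcing $\dot s$ to code $\dot r$, obtained by thinning the Sacks tree to a perfect subtree whose branches all encode $\dot r$ through a fixed $V$--name; this is what makes consecutive degrees $\leq_V$--comparable rather than antichains, and in particular makes each $V[\bar s\restriction(\alpha+1)]$ a ``one real extension'' of $V[\bar s\restriction\alpha]$ on the correct side.

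The second ingredient is the Baumgartner--Laver fusion for countable support iterations of Sacks forcing: given a name for a real and a condition, there is a stronger condition and a countable set $D\in V$ of coordinates so that the name is continuously read off $\dot{\bar s}\restriction D$; hence every real of $V[\bar s]$ lies in $V[\bar s\restriction D]$ for some countable $D\in V$. Combined with the factorisation of $\mathbb{S}^{*\lambda}$ as $\mathbb{S}^{*\alpha}$ followed by (a name for) a countable support iteration of Sacks forcing over $V[\bar s\restriction\alpha]$, and, when $\cf(\lambda)>\omega$, the standard fact that countable support iterations of proper forcings add no reals at stages of uncountable cofinality, this pins down the shape of the degree structure. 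For a real $x$ the least $\alpha$ with $x\in V[\bar s\restriction\alpha]$ cannot be a limit of uncountable cofinality — otherwise $x$, seeing only countably many coordinates, would already appear at a bounded stage — so $\alpha\in A$; applying minimality and the coding refinement at stage $\alpha$ one identifies $[x]_V$ with $[\bar s\restriction C_\alpha]_V$, and the well-ordering and the stated order types follow by induction on $\lambda$ (factorisation at successor steps, reflection of reals at limits of uncountable cofinality, and at such limits the observation that no new $V$--degree appears since every new real was already captured boundedly below).

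The main obstacle is the bookkeeping in this induction, and it concentrates at limit stages $\alpha$ of countable cofinality. There one has to fix the countable cofinal sets $C_\alpha$ coherently and show — this is the genuinely folklore, delicate point — that, using the coding refinement along a cofinal $\omega$--sequence in $\alpha$, the real $\bar s\restriction C_\alpha$ is $\leq_V$--above every real whose least stage is $\alpha$, so that all such reals lie in the single $V$--degree $[\bar s\restriction C_\alpha]_V$; one must also verify that a stage of cofinality $\omega_1$ contributes no new $V$--degree. Granting these, the coding refinement at the successor step gives $\bar s\restriction C_{\alpha+1}\equiv_V\langle\bar s\restriction C_\alpha,s_\alpha\rangle$ with $\bar s\restriction C_\alpha<_V\bar s\restriction C_{\alpha+1}$, so the degrees below $[\bar s\restriction C_\alpha]_V$ are exactly the $[\bar s\restriction C_\beta]_V$ for $\beta\in A\cap(\alpha+1)$, and tallying indices yields order type $\lambda$ when $\cf(\lambda)>\omega$ and $\lambda+1$ otherwise.
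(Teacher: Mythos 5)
First, a caveat: the paper does not prove this Fact at all --- it is stated as folklore and attributed to Groszek (Corollary 14 of \cite{Groszek1988}) and Kanovei \cite{Kanovei1999} --- so your sketch has to be measured against the standard literature argument rather than an in-paper proof. Your ingredients are the right ones (single-step minimality by fusion, continuous reading of names on countable supports in $V$, a coding density argument linking consecutive degrees, factorisation and induction on $\lambda$), and you correctly isolate limits of countable cofinality, and the exclusion of limits of uncountable cofinality, as the points needing care.

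There is, however, a genuine gap, and it already occurs at the successor step. Minimality of the Sacks step at coordinate $\alpha$ is minimality \emph{over the intermediate model} $W=V[\bar s\restriction\alpha]$: the continuous injection $f$ with $x=f(s_\alpha)$ for a new real $x$ of $W[s_\alpha]$ is coded in $W$, not in $V$ (this is Lemma~\ref{lem:singlesacks} applied over $W$). Your coding refinement arranges $s_\alpha\geq_V\bar s\restriction C_\alpha$, but an arbitrary new real $x$ is an arbitrary continuous image of $s_\alpha$ under a map living only in $W$, and nothing in your argument prevents that map from destroying the coding; so the direction $x\geq_V\bar s\restriction C_{\alpha+1}$, which is what forces $[x]_V=[\bar s\restriction C_{\alpha+1}]_V$, does not follow from minimality over $W$ plus coding into $s_\alpha$. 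The same half is missing at limits of countable cofinality: the mechanism you describe yields $x\leq_V\bar s\restriction C_\alpha$, but the claim that all reals first appearing at level $\alpha$ lie in the \emph{single} degree $[\bar s\restriction C_\alpha]_V$ also needs $x\geq_V\bar s\restriction C_\alpha$. The standard repair is to run the fusion globally over $V$: for a name $\dot x$ forced to be new at level $\alpha$, produce a condition $(X,C)$ and a continuous \emph{injection} $F\colon X\to\omega^\omega$ coded in $V$ with $x=F(\bar s\restriction C)$; by compactness of $X$ the inverse $F^{-1}$ is continuous and coded in $V$, so $\bar s\restriction C=F^{-1}(x)\in V(x)$, after which the coding density argument identifies $[\bar s\restriction C]_V$ with $[\bar s\restriction C_\alpha]_V$. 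This injective reading is exactly what the paper's Lemma~\ref{lem:injection} supplies, but there it is \emph{derived from} the degree hypothesis, so for the present Fact you must obtain it directly by fusion, as in Groszek's and Kanovei's arguments.
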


Formulated in a slightly different way, in $V[\bar s]$, there is a distinct degree for each individual Sacks real $s_\alpha$ and one for each limit of countable cofinality, which is the supremum of previous degrees. Specifically, the choice of the cofinal set $C_\alpha \subseteq \alpha$ in the fact above does not matter, as long as it is made in $V$.

As far as we can tell, the first more or less explicit mention of this in the literature is \cite[Corollary 14]{Groszek1988}. Groszek remarks that the result follows from work of A. Miller \cite{Miller1983}. In \cite{Kanovei1999}, Kanovei vastly generalizes the result to more general iterations of Sacks forcing along arbitrary partial orders (in contrast to an ordinal $\lambda$), which may also be ill-founded. Another reformulation of Fact~\ref{fact:degrees} can be found directly after Theorem 1 in \cite{Kanovei1999}.

An extremely helpful tool in the study of the countable support iteration of Sacks forcing is to consider conditions topologically. This is well-known in the context of \emph{idealized forcing} (see \cite{Zapletal2008}) and let's us consider properties such as the \emph{continuous reading of names}. There are a few approaches to this in the literature. The one we use here comes closest to Budinas' notion of normal sets from \cite{Budinas1983}, and, in our opionion, gives the most streamlined arguments due to its simplicity of notation and intuition. It can also be found in our previous work \cite{Schilhan2022, Schilhan2022a} and in some form in \cite{Kanovei1999} (see the definition of $\mathtt{Perf}_\zeta$). Also, it falls in line with Zapletal's presentation of the iteration as an idealized forcing notion (see more in \cite[Section 5.1.1]{Zapletal2008}).

\begin{definition}
    The partial order $\tilde{\mathbb{S}}^{*\lambda}$ consists of pairs $(X,C)$, where $C$ is a countable subset of $\lambda$ and $X$ is a non-empty closed subset of $(2^{\omega})^C$ such that for each $\alpha \in C$ and $\bar x \in X \restriction \alpha = \{ \bar z \restriction \alpha : \bar z \in X \}$, the set $$T_{X, \bar x} = \{ s \in 2^{<\omega} : \exists y, \bar z ( s \subseteq y \wedge \bar x^\frown y^\frown \bar z \in X)   \}$$ is a perfect tree. The order is given by $(Y,D) \leq (X,C)$ iff $C \subseteq D$ and $Y \restriction C \subseteq X$.
\end{definition}

Consider the map $i_\lambda \colon \tilde{\mathbb{S}}^{*\lambda} \to \mathbb{S}^{*\lambda}$, given by $i_\lambda(X,C) = \bar p$, where for each $\alpha < \lambda$, $p(\alpha)$ is an $\mathbb{S}^{*\alpha}$-name for $$\begin{cases}
    2^{<\omega} &\text{ if } \alpha \notin C\\
    T_{X, \dot{\bar s}^\lambda \restriction (C \cap \alpha)} &\text{ if } \alpha \in C.
\end{cases}$$

\begin{fact}\label{fact:basic}
    The map $i_\lambda$ is a dense embedding. Whenever $\dot x$ is an $\mathbb{S}^{*\lambda}$-name for an element of $\omega^\omega$ and $(X, C) \in \tilde{\mathbb{S}}^{*\lambda}$, there is $(Y,D) \leq (X,C)$ and a continuous function $f \colon Y \to \omega^\omega$ so that $$i_\lambda(Y,D) \Vdash \dot x = f(\dot{\bar s}^\lambda \restriction D).$$
\end{fact}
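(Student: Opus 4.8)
The plan is to prove the two assertions separately. For the claim that $i_\lambda$ is a dense embedding, I would first check that $i_\lambda$ is order-preserving and that incompatible elements are sent to incompatible elements; both follow by unwinding the definitions, using that $(Y,D) \leq (X,C)$ forces the generic to lie in $Y$ (restricted to $C$) and hence in $X$, so that the tree $T_{X,\bar x}$ is, coordinate by coordinate, a superset of $T_{Y,\bar y}$ along the generic. For density, I would start with an arbitrary $\bar p \in \mathbb{S}^{*\lambda}$, pass to $C = \supp(\bar p)$ (countable by definition of countable support), and build a condition $(X,C)$ in the image by a fusion argument along $C$: one recursively thins out the name $p(\alpha)$ to a genuine ground-model perfect tree depending continuously on the already-chosen coordinates below $\alpha$. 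The set $X$ is then the closed set of all $\bar z \in (2^\omega)^C$ threading these trees; the perfect-tree requirement in the definition of $\tilde{\mathbb{S}}^{*\lambda}$ is exactly what the fusion delivers at each coordinate, and $i_\lambda(X,C) \leq \bar p$.

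For the second assertion — continuous reading of names — I would again run a fusion argument, this time interleaving the thinning of the closed set $X$ with deciding longer and longer initial segments of $\dot x$. Concretely, enumerate $C$ and all ``finitely many splitting nodes'' conditions, and at stage $n$ shrink $X$ so that for each of the finitely many relevant ``cylinders'' the value $\dot x(n)$ is decided; simultaneously keep the perfect-tree condition alive at every coordinate by never removing more than a prescribed amount of splitting. In the limit one obtains $(Y,D) \leq (X,C)$ (with $D = C$ achievable, or $D$ a countable superset if one prefers to add coordinates freely — for this statement $D = C$ suffices) together with a map $f$ that reads $\dot x(n)$ off a finite initial segment of $\dot{\bar s}^\lambda \restriction D$; continuity of $f$ is immediate since each coordinate of the output depends on only finitely much of the input. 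Then $i_\lambda(Y,D) \Vdash \dot x = f(\dot{\bar s}^\lambda \restriction D)$ by construction.

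The main obstacle is the bookkeeping in the fusion: one must simultaneously (i) preserve, at \emph{every} coordinate $\alpha \in C$ and \emph{every} partial thread $\bar x \in Y \restriction \alpha$, that $T_{Y,\bar x}$ remains a perfect tree, (ii) make the shrinking converge so that the limit $Y$ is still nonempty and closed, and (iii) decide enough of $\dot x$. This is the standard ``fusion for the countable support Sacks iteration'' and is where all the real work sits; the topological reformulation via $\tilde{\mathbb{S}}^{*\lambda}$ is designed precisely so that conditions (i)–(ii) become a transparent statement about closed subsets of $(2^\omega)^C$ rather than about iterated names. Since this fusion machinery is entirely standard (see the references cited before the statement, in particular \cite[Section 5.1.1]{Zapletal2008} and \cite{Kanovei1999}), I would present the argument in outline and refer to the literature for the detailed splitting-node bookkeeping. $\qed$
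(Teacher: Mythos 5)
The paper does not prove this statement at all: it is labelled a \emph{Fact} and, per the opening of Section~2, deferred to the folklore literature (Baumgartner--Laver, Geschke--Quickert, Zapletal), so there is no in-paper argument to compare against. Your outline is the standard fusion argument that those references carry out, and at the level of a sketch it is sound: density via fusing the coordinate names of $\bar p$ into genuine ground-model trees along $\supp(\bar p)$, and continuous reading of names via a fusion that interleaves deciding initial segments of $\dot x$ with preserving splitting at every coordinate and every partial thread. One concrete correction: your parenthetical claim that $D = C$ suffices in the continuous-reading part is false in general. If $\dot x$ is (a name for) $\dot s_\alpha$ with $\alpha \notin C$, then no $(Y,C) \leq (X,C)$ can force $\dot x = f(\dot{\bar s}^\lambda \restriction C)$, since $s_\alpha \notin V[\bar s \restriction C]$ while $f(\bar s \restriction C) \in V[\bar s \restriction C]$. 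The support must be allowed to grow during the fusion --- each step of deciding $\dot x(n)$ extends the condition in $\mathbb{S}^{*\lambda}$ and may introduce new coordinates, and $D$ is the countable union of the supports that appear --- which is exactly why the statement is phrased with $(Y,D) \leq (X,C)$ for a possibly larger countable $D$. With that fixed, deferring the splitting-node bookkeeping to the cited references is the same choice the paper itself makes.
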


We may also note then that a sequence $\bar s \in (2^\omega)^\lambda$ is $\mathbb{S}^{*\lambda}$-generic over $V$ exactly if for each dense subset $D \in V$ of $\tilde{\mathbb{S}}^{*\lambda}$, there is $(X, C) \in D$ with $\bar s \restriction C \in X$.

The next lemma is also well-known and not really specific to Sacks forcing. The argument is relatively short and fits well in the line of our other proofs, which is why we include it.

\begin{lemma}\label{lem:realising}
    Let $\dot x$ and $(X,C)$ be as above and let $A \subseteq \omega^\omega$ be either $\mathbf\Sigma^1_1$ or $\mathbf{\Pi}^1_1$. Then there is $(Y,D)$ and $f$ as above, so that additionally, $i_\lambda(Y,D) \Vdash \dot x \in A$ iff $f''Y \subseteq A$. Moreover, for a $\mathbf{\Sigma}^1_2$ set $A$, we may ensure that $f''Y \subseteq A$ if $i_\lambda(Y,D) \Vdash \dot x \in A$.
\end{lemma}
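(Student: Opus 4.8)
The plan is to prove the lemma by a fusion-style argument over $\tilde{\mathbb{S}}^{*\lambda}$, refining the condition $(X,C)$ together with the continuous function along a sequence of approximations. First I would invoke Fact~\ref{fact:basic} to get an initial $(Y_0, D_0) \leq (X,C)$ and continuous $f_0 \colon Y_0 \to \omega^\omega$ with $i_\lambda(Y_0,D_0) \Vdash \dot x = f_0(\dot{\bar s}^\lambda \restriction D_0)$. The key point is that, since $i_\lambda$ is a dense embedding and $\mathbb{S}^{*\lambda}$ is (in the relevant sense) a forcing with continuous reading of names, forcing statements about $\dot x$ translate into topological statements about $f_0$ on (closed subsets of) $Y_0$. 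The heart of the matter is then the following local structure: for a fixed basic open (equivalently, a fixed node of one of the perfect trees $T_{Y_0,\bar x}$) the set of $\bar y \in Y_0$ over which the corresponding condition forces $\dot x \in A$ is, by the usual absoluteness, itself the preimage under $f_0$ of a set of the same projective complexity; so we are reduced to a statement about $f_0^{-1}(A)$.

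The core of the argument splits according to the complexity of $A$. For $A$ analytic ($\mathbf{\Sigma}^1_1$): the crucial observation is that $f_0^{-1}(A)$ is a $\mathbf{\Sigma}^1_1$ subset of the Polish space $Y_0$, so by the perfect set property (in $\ZF$, since analytic sets have the perfect set property outright) either $f_0^{-1}(A)$ is countable — but a condition $(Y,D)$ always carries a perfect set of branches in every coordinate, so $Y_0$ is uncountable and the complement $Y_0 \setminus f_0^{-1}(A)$ then contains a subcondition — or $f_0^{-1}(A)$ contains a perfect set which, by a further fusion refinement inside $Y_0$, can be thinned to a subcondition $(Y,D) \leq (Y_0,D_0)$ with $Y \restriction D \subseteq f_0^{-1}(A)$, i.e. $f_0''Y \subseteq A$. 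In the first case $i_\lambda(Y,D) \Vdash \dot x \notin A$ and $f_0''Y \cap A = \emptyset$, so the biconditional $i_\lambda(Y,D) \Vdash \dot x \in A$ iff $f_0''Y \subseteq A$ holds vacuously (both sides false); in the second it holds with both sides true. For $A$ co-analytic ($\mathbf{\Pi}^1_1$): one runs essentially the same dichotomy on the $\mathbf{\Sigma}^1_1$ complement, and since $\mathbf{\Pi}^1_1$ sets with the perfect set property likewise either contain a perfect set or are contained in a set of size $\aleph_1$ — but here we need more care, so instead I would directly argue: either the condition below $(Y_0,D_0)$ already forces $\dot x \in A$ generically, in which case all generic reals land in $A$ and by a density/fusion argument $f_0''Y \subseteq A$ for a suitable subcondition, or some subcondition forces $\dot x \notin A$, and then $f_0''Y \subseteq \omega^\omega \setminus A$ for a refinement, again making both sides of the biconditional false. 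The point is that the biconditional is arranged to be a tautology once we have decided which "side" we are on.

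For the "moreover" clause with $A \in \mathbf{\Sigma}^1_2$: here we only want the single implication that $i_\lambda(Y,D) \Vdash \dot x \in A$ implies $f''Y \subseteq A$. The idea is that if $i_\lambda(Y_0,D_0) \Vdash \dot x \in A$ then, writing $A$ via a $\mathbf{\Pi}^1_1$ matrix as $A = \{ z : \exists w\, R(z,w)\}$ with $R$ co-analytic, we may by Fact~\ref{fact:basic} pass to a subcondition continuously reading a witness: find $(Y_1,D_1) \leq (Y_0,D_0)$ and continuous $g \colon Y_1 \to \omega^\omega$ with $i_\lambda(Y_1,D_1) \Vdash R(\dot x, g(\dot{\bar s}^\lambda \restriction D_1))$; i.e. $i_\lambda(Y_1,D_1) \Vdash (f_0 \times g)(\dot{\bar s}^\lambda \restriction D_1) \in R$. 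Now apply the $\mathbf{\Pi}^1_1$ case of the lemma already proved to the pair $(f_0,g)$ and the co-analytic set $R$: we get $(Y,D) \leq (Y_1,D_1)$ with $(f_0 \times g)''Y \subseteq R$ (it cannot land in the complement, since the condition forces membership in $R$), hence $f_0''Y \subseteq A$, as $g$ exhibits the existential witness pointwise on $Y$.

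The main obstacle I anticipate is the fusion bookkeeping: turning "the preimage $f_0^{-1}(A)$ contains a perfect set $P \subseteq Y_0$" into an honest subcondition $(Y,D) \in \tilde{\mathbb{S}}^{*\lambda}$ below $(Y_0,D_0)$, since a perfect subset of $Y_0 \subseteq (2^\omega)^{D_0}$ need not have the product/tree structure required of a condition (each coordinate section must be a perfect tree over the appropriate $T_{Y,\bar x}$). Overcoming this requires the standard Sacks fusion technique adapted to $\tilde{\mathbb{S}}^{*\lambda}$: enumerate the (countably many) coordinates in $D_0$ and the splitting requirements, and build a decreasing fusion sequence of conditions each living inside the previously chosen perfect set, taking the limit — exactly the kind of argument that Fact~\ref{fact:basic} is proved by, so it is available to us; I would cite it as the continuous-reading/fusion machinery rather than redoing it. Everything else (the absoluteness translating forcing statements into topological ones, the perfect set property for $\mathbf{\Sigma}^1_1$) is standard.
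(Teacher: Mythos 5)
Your reduction of the problem to a statement about $f_0^{-1}(A)$ and your handling of the $\mathbf{\Sigma}^1_2$ ``moreover'' clause (read a witness continuously, then apply the $\mathbf{\Pi}^1_1$ case to the matrix $R$ and the pair $(f_0,g)$) are sound and match the intended argument. But the core of your proof --- the analytic case --- rests on a false step. The dichotomy ``$f_0^{-1}(A)$ is countable or contains a perfect set'' is not the relevant one, because a perfect subset $P$ of $Y_0 \subseteq (2^\omega)^{D_0}$ need in general contain \emph{no} subcondition at all, and no fusion ``inside $P$'' can manufacture one: take $\lambda=2$, $Y_0=(2^\omega)^2$ and $P=\{(x,x): x\in 2^\omega\}$; every section $T_{P,\bar x}$ over a first coordinate is a single branch, so nothing below $P$ is a condition. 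What you actually need is a Marczewski-type measurability of $f_0^{-1}(A)$ with respect to the iterated-Sacks conditions, and that is essentially the content of the lemma itself, so your argument is circular at this point. The paper instead proves the forward implication directly from the forcing relation: writing $A=\proj B$ with $B$ closed, it reads a witness $\dot y$ continuously as $g$, observes that the set of restrictions of $M$-generics (for a countable $M\preceq H(\theta)$) is dense in $Y$, notes $(f(\bar y),g(\bar y))\in B$ on that dense set, and then propagates this to all of $Y$ using continuity of $f,g$ and \emph{closedness} of $B$. No perfect set property is invoked.

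The coanalytic case has a second, independent gap. Your phrase ``all generic reals land in $A$ and by a density/fusion argument $f_0''Y\subseteq A$'' is exactly the step that fails: $A$ is $\mathbf{\Pi}^1_1$, not closed, so knowing $f_0(\bar y)\in A$ on a dense set of $\bar y$ gives nothing about the remaining points of $Y$. The missing idea is a boundedness argument: represent $A=h^{-1}(\WO)$, use that the iteration preserves $\omega_1$ to extend the condition so that it decides the order type, i.e.\ forces $h(\dot x)\in\WO_\alpha$ for a fixed $\alpha<\omega_1^V$, and then observe that $\{x: h(x)\in\WO_\alpha\}$ is an analytic subset of $A$, reducing to the previous case. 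Without this (or some equivalent), the $\mathbf{\Pi}^1_1$ case --- and hence also your $\mathbf{\Sigma}^1_2$ argument, which depends on it --- does not go through.
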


\begin{proof}
If $f''Y \subseteq A$, clearly we must have that $i_\lambda(Y,D) \Vdash \dot x \in A$, as the statement ``$f(\bar y) \in A$, for each $\bar y \in Y$", is absolute. 

Now suppose that $A$ is analytic. More specifically, let $A$ be the projection of a closed set $B \subseteq (\omega^\omega)^2$. For any $(Y, D)$, if $i_\lambda(Y,D) \Vdash \dot x \in A$, there must be a $\mathbb{S}^{*\lambda}$-name $\dot y$ so that $i_\lambda(Y,D) \Vdash (\dot x, \dot y) \in B$. By extending $(Y,D)$ a bit further, using the fact above, we may assume there is a continuous function $g \colon Y \to \omega^\omega$, so that $i_\lambda(Y,D) \Vdash \dot y = g(\dot{\bar s}^\lambda \restriction D)$. Letting $M$ be a countable elementary substructure of some large model $H(\theta)$, note that the subset $Y' \subseteq Y$ with elements of the form $\bar s \restriction D$, for $\bar s$ generic over $M$, is dense in $Y$. For each $\bar y \in Y'$, we have that $(f(\bar y), g(\bar y)) \in B$. But then for each $\bar y \in Y$, $(f(\bar y), g(\bar y)) \in B$, by continuity of $f$ and $g$, and closure of $B$. Followingly, $f''Y \subseteq A$.

Now suppose $A$ is coanalytic. Recall the following representation theorem for $A$ (see e.g. \cite{Kechris}): There is a continuous function $h \colon \omega^\omega \to 2^{\omega}$, so that $A = h^{-1}(\WO)$, where $\WO \subseteq 2^{\omega}$ consists of those $r$, with $E_r = \{ (n,m): r(2^n3^m) = 1 \}$ a well-order on $\omega$. For any ordinal $\alpha < \omega_1$, write $\WO_\alpha$ for the set of $r \in \WO$, where $(\omega, E_r)$ has order-type $\alpha$. If $i_\lambda(Y,D) \Vdash \dot x \in A$, by extending $(Y,D)$, we may assume that $i_\lambda(Y,D) \Vdash h(\dot x) \in \WO_\alpha$, for a particular ordinal $\alpha \in \omega_1^V$, as $\omega_1$ is not collapsed.\footnote{The same representation holds for the reinterpretation of $A$ in any extension of $V$, where we reinterpret $h$ and $\WO$ accordingly.} But then again, the set of $x \in \omega^\omega$, with $h(x) \in \WO_\alpha$, is analytic and a subset of $A$, so we may fall back to the analytic case, which we have already provided.\end{proof}

The following quite simple observation will be used a few times in Section~\ref{sec:reflection}.

\begin{lemma}
    Let $(X, C) \in \tilde{\mathbb{S}}^{*\lambda}$ and $C' \subseteq C$. Then $(X \restriction C', C')$ is a condition.
\end{lemma}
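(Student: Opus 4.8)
The plan is to verify directly that the pair $(X\restriction C',C')$ satisfies the three requirements in the definition of $\tilde{\mathbb{S}}^{*\lambda}$. That $C'$ is a countable subset of $\lambda$ is immediate from $C'\subseteq C$. For non-emptiness and closedness of $X\restriction C'$ I would invoke compactness: $(2^\omega)^C$ is compact, so the closed set $X$ is compact, and $X\restriction C'$ is its image under a continuous coordinate projection, hence compact and in particular closed in $(2^\omega)^{C'}$; it is non-empty because $X$ is.

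The one point requiring an argument is that $T_{X\restriction C',\bar x}$ is a perfect tree for every $\alpha\in C'$ and $\bar x\in(X\restriction C')\restriction\alpha$. First note that $(X\restriction C')\restriction\alpha=X\restriction(C'\cap\alpha)$, and I would then establish the identity
$$T_{X\restriction C',\bar x}=\bigcup\bigl\{\,T_{X,\bar x'}:\bar x'\in X\restriction\alpha,\ \bar x'\restriction(C'\cap\alpha)=\bar x\,\bigr\}$$
by unwinding the definitions: a witness $\bar u\in X$ for $s\in T_{X\restriction C',\bar x}$ satisfies $\bar u\restriction(C'\cap\alpha)=\bar x$, and then $\bar x':=\bar u\restriction(C\cap\alpha)$ lies in the index set and witnesses $s\in T_{X,\bar x'}$; conversely, if $\bar x'\restriction(C'\cap\alpha)=\bar x$, then any witness $\bar v\in X$ for a node of $T_{X,\bar x'}$ satisfies $\bar v\restriction(C'\cap\alpha)=\bar x$, so, after restricting it to $C'$, it witnesses that same node for $T_{X\restriction C',\bar x}$; hence $T_{X,\bar x'}\subseteq T_{X\restriction C',\bar x}$. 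Since $\bar x\in X\restriction(C'\cap\alpha)$, the index set is non-empty (take $\bar u\in X$ with $\bar u\restriction(C'\cap\alpha)=\bar x$, then $\bar u\restriction(C\cap\alpha)$ works), and each $T_{X,\bar x'}$ occurring in the union is a perfect tree because $(X,C)$ is a condition.

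Finally I would use the elementary observation that a non-empty union of perfect trees is again a perfect tree: the union is clearly a non-empty downward-closed subset of $2^{<\omega}$, and any of its nodes $t$ already belongs to some $T_{X,\bar x'}$ in the union, which supplies incompatible $t_0,t_1\supseteq t$ inside $T_{X,\bar x'}$ and hence inside the union. I do not expect a genuine obstacle; the only subtlety worth flagging is the natural worry that shrinking the support might enlarge the generating tree $T_{X\restriction C',\bar x}$ and introduce dead ends or non-splitting nodes, and the displayed identity is exactly what rules this out, by exhibiting $T_{X\restriction C',\bar x}$ as a union of the old perfect trees.
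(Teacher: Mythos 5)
Your proof is correct and follows the same route as the paper: closedness of $X\restriction C'$ via projection of a compact set, and exhibiting $T_{X\restriction C',\bar x}$ as a non-empty union of the perfect trees $T_{X,\bar x'}$, which is therefore perfect. You merely spell out the identity and the union-of-perfect-trees observation that the paper leaves as "clearly".
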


\begin{proof}
    $X \restriction C'$ is closed as it is the projection of a compact set. If $\alpha \in C'$ and $\bar x \in X \restriction (C' \cap \alpha)$, then $T_{X \restriction C', \bar x}$ is clearly the union of perfect trees, as $(X, C)$ was a condition. Thus $T_{X \restriction C', \bar x}$ is itself a perfect tree.
\end{proof}

Also the following is well-known and due to Sacks' original work \cite{Sacks}.

\begin{lemma}\label{lem:singlesacks}
    Let $s$ be a Sacks real over $V$. Then for every real $x \in 2^\omega \cap ( V[s] \setminus V)$, $x$ is a Sacks real over $V$, $V[x] = V[s]$ and there is a continuous injection $f \colon 2^\omega \to 2^\omega$ in $V$, such that $f(x) = s$. 
\end{lemma}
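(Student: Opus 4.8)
The plan is to combine continuous reading of names with a fusion argument, and then to extend the resulting homeomorphism to all of $2^\omega$ by a standard Cantor‑space construction. First, since $x \in 2^\omega \cap V[s]$, the continuous reading of names for $\Ss$ (the case $\lambda = 1$ of Fact~\ref{fact:basic}, applied below a condition in $s$'s generic filter that forces a value for $x$) yields a perfect tree $T$ in the generic filter and a continuous $g \colon [T] \to 2^\omega$ in $V$ with $g(s) = x$. Shrinking $T$ within the generic filter so that $T \Vdash g(\dot s) \notin V$ (possible since $g(s) = x \notin V$), we may assume $g\restriction[S]$ is non‑constant for every perfect tree $S \subseteq T$ (a constant value would be a forced value lying in $V$).

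Next I build, by fusion, a perfect tree $S \subseteq T$ such that $g\restriction[S]$ is injective and $[S]$ has Lebesgue measure zero. At the $n$‑th stage, for each $n$‑th splitting node $t$ with immediate successors $u_0, u_1$, I use non‑constancy of $g$ on the perfect set above $t$ to pick branches $y_0, y_1$ above $u_0, u_1$ with $g(y_0) \neq g(y_1)$, and then, by continuity of $g$, shrink the current subtrees above $u_0$ and $u_1$ — extending their stems to some length $\geq 2(n+1)$ — so that their $g$‑images lie in two disjoint cylinders. Any two distinct branches of the resulting $[S]$ split at some stage and then go into subtrees with disjoint $g$‑images, so $g\restriction[S]$ is injective; and for every $n$, $[S]$ is covered by at most $2^{n+1}$ cylinders of length $\geq 2(n+1)$, so $\mu([S]) = 0$. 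This set of $S$ being dense below $T$, I fix such an $S$ in the generic filter: then $s \in [S]$, $g\restriction[S]$ is a homeomorphism of $[S]$ onto the perfect closed set $P := g''[S] \in V$ with $x \in P$, and $[S]$ is closed and nowhere dense. From this the first two clauses follow quickly. Writing $h := (g\restriction[S])^{-1}\colon P \to [S]$, which is a continuous injection in $V$ with $h(x) = s$, we get $s \in V[x]$, hence $V[x] = V[s]$. For genericity: $P = [R_P]$ for a perfect tree $R_P \in V$, and given any dense open $D \subseteq \Ss$ below $R_P$ in $V$, the family of perfect trees $S'$ with $[S'] = h''[R']$ for $R' \in D$ is dense below $S$ (given $Q \leq S$, $g''[Q] = [R_Q]$ for some $R_Q \leq R_P$; refine $R_Q$ into $D$ and pull it back by $h$); since $S$ is in the generic filter, $x = g(s)$ meets $D$. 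Thus $x$ is $\Ss$‑generic over $V$, i.e.\ a Sacks real over $V$.

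It remains to extend $h$ to a continuous injection $f \colon 2^\omega \to 2^\omega$ in $V$ with $f(x) = s$; this is a general topological fact (a continuous injection of a closed subset of $2^\omega$ into $2^\omega$ with nowhere‑dense image extends to a continuous injection of $2^\omega$), and is the step needing the most care. Write $2^\omega \setminus P = \bigsqcup_i [t_i]$, where $\{t_i\}$ is the antichain of minimal nodes whose cylinder misses $P$, so that the immediate predecessor $t_i^-$ satisfies $[t_i^-] \cap P \neq \emptyset$. Enumerate the $[t_i]$ and, processing them in turn, set $f\restriction[t_i]$ to be the canonical homeomorphism onto a basic clopen set $D_i \subseteq [e_i] \setminus ([S] \cup \bigcup_{j<i} D_j)$, where $[e_i]$ is a basic clopen set containing $h''([t_i^-] \cap P)$ whose length grows with $|t_i|$ (by uniform continuity of $h$ on the compact set $P$); such $D_i$ exists because $[S] = h''P$ is nowhere dense while $[e_i]$ meets $[S]$ and the finitely many earlier $D_j$ do not, so $[e_i]$ is not covered by $[S] \cup \bigcup_{j<i} D_j$. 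Putting $f := h$ on $P$, the $D_i$ being pairwise disjoint and disjoint from $[S]$ makes $f$ injective; $f$ is continuous on the open set $\bigsqcup_i [t_i]$; and $f$ is continuous at each $z \in P$, since $z_n \to z$ with $z_n \in [t_{i_n}]$ forces $|t_{i_n}| \to \infty$, so $f(z_n) \in [e_{i_n}]$, cylinders of vanishing diameter meeting $h''([t_{i_n}^-] \cap P)$, which converges to $h(z) = f(z)$. Finally $f(x) = h(x) = s$. The main obstacle is the bookkeeping in the fusion — arranging injectivity of $g\restriction[S]$ together with smallness of $[S]$ (needed so that the image $[S]$ of $h$ is nowhere dense) — and verifying that the extension can be carried out keeping $f$ simultaneously injective and continuous at the points of $P$; both are routine but must be set up carefully.
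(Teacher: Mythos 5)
Your proof is correct and takes essentially the same route as the paper: continuous reading of names below a condition in the generic filter, a fusion argument making the reading injective (plus your measure-zero refinement so that the image is nowhere dense), extension to a total continuous injection in $V$, and a density argument for genericity. The paper cites Geschke--Quickert for the fusion and dispatches the extension step in a single clause (``thinning out $S$ a bit further''), whereas you carry both out explicitly -- and you build $f$ in the direction $f(x)=s$ literally demanded by the statement, while the paper's proof produces $f(s)=x$ and leaves the (immaterial) inversion implicit.
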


\begin{proof}
    Working in $V$, let $\dot x$ be a name for $x$ and $T \in \mathbb{S}$ arbitrary, such that $T \Vdash \dot x \notin V$. By a standard Axiom A-like fusion argument we find $S \leq T$ and a continuous injection $g \colon [S] \to 2^\omega$, with $S \Vdash g(\dot s) = \dot x$ (see for instance \cite[Lemma 25, 28]{Geschke}). Thinning out $S$ a bit further, if necessary, we can extend $g$ to a total continuous injection $f \colon 2^\omega \to 2^\omega$. A genericity argument thus shows that such $f$ exists with $f(s) = x$.   
    
    Next, $f$ being given, and $T \in \mathbb{S}$ being arbitrary again, $f''[T]$ is an uncountable closed set and thus contains the branches of a perfect tree $S$. If $D \subseteq \mathbb{S}$ is an arbitrary dense set, we find $S' \leq S$ with $S' \in D$. $f^{-1}([S']) \subseteq [T]$ contains the branches of a perfect tree $T'$. Then $T' \leq T$ and $T' \Vdash f(\dot s) \in [S']$, where $S' \in D$. Again, by genericity, $x$ is a generic real for $\mathbb{S}$.
 
    Clearly $V[x] \subseteq V[s]$, as $x \in V[s]$. On the other hand, as $s = f^{-1}(x)$, also $s \in V[x]$, so $V[s] \subseteq V[x]$.
\end{proof}

\section{Reflections}\label{sec:reflection}

\begin{definition}
Let $\lambda$ be an ordinal, $C, D \subseteq \lambda$ countable and $\iota \colon C \to D$ an order-isomorphism. Given a condition $(X,C) \in \tilde{\mathbb{S}}^{*\lambda}$, $\iota$ naturally induces a condition $(\iota[X], D) \in \tilde{\mathbb{S}}^{*\lambda}$, where $$\iota[X] = \{ \langle x_{\iota^{-1}(i)} : i \in D \rangle : \bar x \in X\}.$$ 

Similarly, if $f \colon X \to \omega^\omega$, we define the function $\iota[f]\colon \iota[X] \to \omega^\omega$, where $$\iota[f](\bar x) = f(\langle x_{\iota(i)} : i \in C \rangle ).$$
\end{definition}

We first make a simple observation: 

\begin{lemma}
    Let $(X, C) \in \tilde{\mathbb{S}}^{*\lambda}$, $f \colon X \to \omega^\omega$ be continuous and $\iota \colon C \to D$ be an order-isomorphism. Then $$i_\lambda(X, C) \Vdash f(\dot{\bar s}^\lambda \restriction C) \notin V \text{ iff } i_\lambda(\iota[X], D) \Vdash \iota[f](\dot{\bar s}^\lambda \restriction D) \notin V.$$
\end{lemma}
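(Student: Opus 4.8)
The plan is to exploit the fact that the map $\iota$ is a combinatorial relabeling of coordinates that leaves all the relevant forcing-theoretic data invariant. The statement $i_\lambda(X,C) \Vdash f(\dot{\bar s}^\lambda \restriction C) \notin V$ is really a statement about the pair $(X,C)$ together with the continuous function $f$, and about which reals in $(2^\omega)^C$ are $\tilde{\mathbb{S}}^{*\lambda}$-generic over $V$ below the condition $(X,C)$. I would show that $\iota$ induces an isomorphism between the restricted forcings (the conditions below $(X,C)$ in $\tilde{\mathbb{S}}^{*\lambda}$ and those below $(\iota[X],D)$) which commutes with everything in sight.

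First I would check the routine point that $(\iota[X],D)$ is genuinely a condition: for $i \in D$ and $\bar y \in \iota[X] \restriction i$, the tree $T_{\iota[X],\bar y}$ equals $T_{X, \langle y_{\iota(j)} : j \in C, \iota(j) < i\rangle}$ by the definition of $\iota[X]$ and the fact that $\iota$ is order-preserving, hence it is perfect; and $\iota[X]$ is closed since it is a continuous (indeed homeomorphic) image of the compact set $X$. Next I would make precise the map $(Z,E) \mapsto (\iota^+[Z], \iota^+[E])$ on conditions below $(X,C)$, where $\iota^+$ is any order-isomorphism of a countable superset $E \supseteq C$ onto a countable superset of $D$ extending $\iota$ — or, more cleanly, note that for the purpose of forcing a statement about $\dot{\bar s}^\lambda \restriction C$ we may restrict attention to conditions of the form $(Z, C)$ with $(Z,C) \le (X,C)$, using the lemma above that $(Z \restriction C, C)$ is still a condition and that $Z \restriction C$ decides $\dot{\bar s}^\lambda \restriction C$-information just as well. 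On this sub-poset, $Z \mapsto \iota[Z]$ is an order-isomorphism onto the conditions $\le (\iota[X],D)$ of the form $(W,D)$.

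Then the key observation is the genericity transfer: a real $\bar y \in \iota[X]$ (equivalently $\langle y_{\iota(j)} : j\in C\rangle \in X$) is $\tilde{\mathbb{S}}^{*\lambda}$-generic over $V$ relative to the cone below $(\iota[X],D)$ if and only if $\langle y_{\iota(j)} : j \in C\rangle$ is generic relative to the cone below $(X,C)$; this is immediate from the order-isomorphism, since $\bar y$ meets $\iota[\mathcal{D}]$ for a dense $\iota[\mathcal{D}]$ exactly when $\langle y_{\iota(j)}:j\in C\rangle$ meets $\mathcal{D}$, and $\iota$ witnesses a bijection between dense sets below the two conditions. Combining this with $\iota[f](\bar y) = f(\langle y_{\iota(j)} : j \in C\rangle)$, we get that $i_\lambda(\iota[X],D) \Vdash \iota[f](\dot{\bar s}^\lambda \restriction D) \notin V$ iff for every generic $\bar y \in \iota[X]$ over $V$ (below the condition), $f(\langle y_{\iota(j)}:j\in C\rangle) \notin V$, iff for every generic $\bar x \in X$ over $V$ (below the condition), $f(\bar x) \notin V$, iff $i_\lambda(X,C) \Vdash f(\dot{\bar s}^\lambda \restriction C) \notin V$. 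Note the statement ``$f(\bar x) \notin V$'' is absolute given $f$ and $\bar x$, so there is no subtlety there.

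The main obstacle, and the one place requiring a little care, is making the ``restrict to conditions of the form $(Z,C)$'' reduction fully rigorous: a general condition below $i_\lambda(X,C)$ in $\mathbb{S}^{*\lambda}$, pulled back via the dense embedding $i_\lambda$, has the form $(Z,E)$ with $C \subseteq E$ and $E$ possibly much larger than $C$, and $\iota$ is only defined on $C$. The cleanest fix is to observe that whether $i_\lambda(X,C) \Vdash \varphi(\dot{\bar s}^\lambda \restriction C)$ for a formula $\varphi$ depending only on the coordinates in $C$ is determined already by the structure of $X \restriction C$ and the collection of $\bar x \in X\restriction C$ that are generic over $V$; formally, one uses that for any $(Z,E)\le (X,C)$, the condition $(Z\restriction C, C)$ is stronger than $(X,C)$, is compatible with $(Z,E)$, and forces the same thing about $\dot{\bar s}^\lambda \restriction C$, so it suffices to run the whole argument on the dense-below-$(X,C)$ set of conditions of the form $(Z, C)$. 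Everything else is bookkeeping with the definitions of $\iota[X]$, $\iota[f]$, and $T_{X,\bar x}$.
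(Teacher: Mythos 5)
Your guiding intuition---that the relabeling $\iota$ is invisible to the forcing---is correct, and you rightly isolate the one real difficulty: conditions below $(X,C)$ have arbitrary countable supports $E\supseteq C$ on which $\iota$ is undefined. But the fix you offer does not close this gap. The claim that for $(Z,E)\le(X,C)$ the weakening $(Z\restriction C,C)$ ``forces the same thing about $\dot{\bar s}^\lambda\restriction C$'' as $(Z,E)$ is false in general: $(Z,E)$ is a strictly stronger condition and can decide statements about $\dot{\bar s}^\lambda\restriction C$ (evaluated in the full extension) that $(Z\restriction C,C)$ leaves open, and the conditions with support exactly $C$ are \emph{not} dense below $(X,C)$. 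Likewise, the ``genericity transfer'' is not immediate from the order-isomorphism: the assertion you need is that the relabeling carries $\{\bar s\restriction C:\bar s\text{ generic},\ \bar s\restriction C\in X\}$ onto $\{\bar s\restriction D:\bar s\text{ generic},\ \bar s\restriction D\in\iota[X]\}$, and since dense subsets of $\tilde{\mathbb{S}}^{*\lambda}$ below $(X,C)$ involve conditions with supports far larger than $C$, this is essentially the same difficulty as the lemma itself (compare Lemma~\ref{lem:emb13}, where transferring in one direction requires extending $\iota$ to larger supports and hence room in $\lambda$). As written, the argument is circular at its central step.

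The way to avoid all of this is to replace the semantic characterization by a combinatorial one that is manifestly local to the support $C$: the condition $i_\lambda(X,C)$ \emph{fails} to force $f(\dot{\bar s}^\lambda\restriction C)\notin V$ if and only if there is an extension $(Z,E)\le(X,C)$ on which $f$ is constant on $Z\restriction C$ (one direction is immediate; for the other, if some extension forces $f(\dot{\bar s}^\lambda\restriction C)=\check x$, continuity of $f$ lets one shrink until $f\equiv x$ on the $C$-restriction). Constancy of $f$ on $Z\restriction C$ is a property of the set $Z\restriction C$ alone, $(Z\restriction C,C)$ is again a condition, and the relabeled set $(\iota^{-1}[Z\restriction D],C)$ (respectively $(\iota[Z\restriction C],D)$) is an extension of $(X,C)$ (respectively $(\iota[X],D)$) on which the relabeled function is constant. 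This yields both directions of the equivalence without ever having to transfer genericity or act on supports larger than $C$.
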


\begin{proof}
    If $i_\lambda(\iota[X], D)$ does not force $\iota[f](\dot{\bar s}^\lambda \restriction D)$ to be new, then there is some extension $(Z,E) \leq (\iota[X], D)$ so that $f$ is constant on $Z\restriction D$. $(Z \restriction D, D)$ is a condition, $(\iota^{-1}[Z \restriction D], C) \leq (X,C)$ and $f = \iota^{-1}[\iota[f]]$ is clearly also constant on $(\iota^{-1}[Z \restriction D], C)$. Thus $i_\lambda(X, C)$ could not have forced $f(\dot{\bar s}^\lambda \restriction C)$ to be new. Similarly, for the converse implication.
\end{proof}

\begin{lemma}\label{lem:emb12}
    Let $\varphi(v)$ be a $\Pi^1_2(r)$-formula, $r \in (\omega^\omega)^V$. Let $(X, C) \in \tilde{\mathbb{S}}^{*\lambda}$, $f \colon X \to \omega^\omega$ continuous and $\iota \colon C \to D$ an order-isomorphism. Then $$i_\lambda(X, C) \Vdash \varphi(f(\dot{\bar s}^\lambda \restriction C)) \text{ iff } i_\lambda(\iota[X], D) \Vdash \varphi(\iota[f](\dot{\bar s}^\lambda \restriction D)).$$
\end{lemma}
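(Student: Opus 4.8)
The plan is to prove this by induction on the complexity of $\Pi^1_2$ formulas, reducing the $\Pi^1_2$ case to a $\Sigma^1_1$/$\Pi^1_1$ statement via Lemma~\ref{lem:realising} together with the Shoenfield-style absoluteness that governs the Sacks iteration. Since $\iota$ and $\iota^{-1}$ play symmetric roles, it suffices to prove one direction, say that $i_\lambda(X,C) \Vdash \varphi(f(\dot{\bar s}^\lambda \restriction C))$ implies $i_\lambda(\iota[X],D) \Vdash \varphi(\iota[f](\dot{\bar s}^\lambda \restriction D))$; applying this to $\iota^{-1}$, $\iota[X]$, $\iota[f]$ (and noting $\iota^{-1}[\iota[X]] = X$, $\iota^{-1}[\iota[f]] = f$) gives the converse.

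First I would write $\varphi(v) = \forall u\, \psi(u,v)$ where $\psi$ is $\Sigma^1_1(r)$. Suppose toward a contradiction that $i_\lambda(X,C) \Vdash \varphi(f(\dot{\bar s}^\lambda \restriction C))$ but some $(Z,E) \leq (\iota[X],D)$ forces $\neg\varphi(\iota[f](\dot{\bar s}^\lambda \restriction D))$, i.e. forces $\exists u\, \neg\psi(u,\iota[f](\dot{\bar s}^\lambda \restriction D))$. Pick a name $\dot u$ witnessing this. Using Fact~\ref{fact:basic}, extend $(Z,E)$ to $(Z',E')$ with a continuous $g \colon Z' \to \omega^\omega$ so that $i_\lambda(Z',E') \Vdash \dot u = g(\dot{\bar s}^\lambda \restriction E')$; then $i_\lambda(Z',E') \Vdash \neg\psi(g(\dot{\bar s}^\lambda \restriction E'), \iota[f](\dot{\bar s}^\lambda \restriction D))$. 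Since $\neg\psi$ is $\mathbf\Pi^1_1$, apply Lemma~\ref{lem:realising} (to the pair-coded continuous function $\bar y \mapsto (g(\bar y), \iota[f](\bar y \restriction D))$) to get $(Z'',E'') \leq (Z',E')$ with the property that $\neg\psi$ holds of $(g(\bar y), \iota[f](\bar y \restriction D))$ for \emph{every} $\bar y \in Z''$. Now transport back along $\iota^{-1}$: enlarge $E''$ if necessary to contain $\iota^{-1}[E'' \cap D] \cup$ (finitely/countably much more) so that $D \subseteq E''$, set $W = \iota^{-1}[(Z'' \restriction (E'' \cup \iota^{-1}[E'']))]$ suitably — the clean way is to let $F = E'' \cup \iota^{-1}[E'' \setminus D]$, extend $\iota$ to an order isomorphism $\hat\iota$ between $F$ and $E''$ fixing $E'' \cap D$, and put $(W,F) = (\hat\iota^{-1}[Z''], F)$. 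This is a condition refining $(X,C)$ on coordinates in $C$, and by the previous lemma's style of reasoning $\neg\psi$ holds of $(\hat\iota[g'](\bar w), f(\bar w \restriction C))$ for every $\bar w \in W$, where $g' = g$ composed appropriately. Hence $i_\lambda(W,F)$ forces $\exists u\, \neg\psi(u, f(\dot{\bar s}^\lambda \restriction C))$, contradicting $i_\lambda(X,C) \Vdash \varphi(f(\dot{\bar s}^\lambda \restriction C))$.

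The key insight making this work is that Lemma~\ref{lem:realising} lets us pass from a \emph{forcing} statement about a $\mathbf\Pi^1_1$ (or $\mathbf\Sigma^1_1$) property of a name to a \emph{pointwise} statement about a continuous image of a closed set $Y$ — and pointwise statements about continuous images transfer perfectly under the purely combinatorial relabelling $\iota$, since $\iota[f]$ and $\iota[X]$ are defined so that $\iota[f](\iota[X])$ and $f(X)$ are literally the same subset of $\omega^\omega$. The universal quantifier in $\Pi^1_2$ is handled by the usual name-extraction: a forced existential over $\omega^\omega$ is witnessed by a name, which (by continuous reading of names) is a continuous function of the generic, pushing everything into the scope of Lemma~\ref{lem:realising}.

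The main obstacle I expect is purely bookkeeping: managing the supports carefully when transporting conditions back and forth. The issue is that $\dot u$ (equivalently $g$) may depend on coordinates in $E'$ outside of $D$, and these have no canonical image under $\iota$; one must extend $\iota$ to an order-isomorphism of larger countable index sets, and simultaneously extend the original condition $(X,C)$ on \emph{fresh} coordinates (using that conditions put $2^{<\omega}$ on coordinates outside their support, so adding fresh coordinates to the support, with full trees above, is harmless). Making sure the extended map is still an order-isomorphism and that the resulting pair is genuinely a condition in $\tilde{\mathbb{S}}^{*\lambda}$ (i.e. all the $T_{W,\bar w}$ are perfect trees) is the delicate point, though it follows the pattern of the two preceding lemmas. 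One also has to double-check the $\mathbf\Sigma^1_2$ clause of Lemma~\ref{lem:realising} is not needed here — only the $\mathbf\Pi^1_1$ and $\mathbf\Sigma^1_1$ clauses are, which is why the reflection stops at $\Pi^1_2$ rather than going higher.
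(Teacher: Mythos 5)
Your overall architecture is right (reduce to one direction by symmetry, unfold $\varphi = \forall u\,\psi$ with $\psi\in\Sigma^1_1(r)$, extract a name for a counterexample, read it continuously, apply Lemma~\ref{lem:realising} to turn the forcing statement into a pointwise $\mathbf\Pi^1_1$ statement, and transport), but the step you dismiss as ``purely bookkeeping'' is a genuine obstruction, and your fix for it does not work. You propose to extend $\iota$ (equivalently $\iota^{-1}$) to an order-isomorphism $\hat\iota$ whose domain covers the fresh coordinates $E''\setminus D$ introduced by the witness $g$. For arbitrary $\lambda$ and an arbitrary order-isomorphism $\iota\colon C\to D$ between countable subsets of $\lambda$, such an extension need not exist: take $\lambda=\omega+1$, $C=\{\omega\}$, $D=\{0\}$, and an extension with support $E=\{0,1\}$; the new coordinate $1$ would have to be sent to an ordinal of $\lambda$ above $\omega$, and there is none. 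The same problem occurs for coordinates of $E\setminus D$ falling into gaps of $D$ whose $\iota^{-1}$-images are consecutive ordinals. You cannot drop order-preservation either: the defining property of conditions in $\tilde{\mathbb{S}}^{*\lambda}$ (all $T_{X,\bar x}$ perfect along initial segments) is order-sensitive, so an arbitrary relabelling of $E''$ need not yield a condition. (Also, as written, $\iota^{-1}[E''\setminus D]=\emptyset$ since $\iota^{-1}$ is only defined on $D$, so your set $F$ is just $E''$.)

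The paper avoids extending $\iota$ altogether. After obtaining $(Z,E)\leq(\iota[X],D)$ and a continuous $g$ with $\neg\psi(\iota[f](\bar z\restriction D),g(\bar z))$ for all $\bar z\in Z$, it passes to the restricted condition $(Z\restriction D,D)$ and uses the Jankov--von Neumann/compact-section uniformization theorem to choose a Borel section $h\colon Z\restriction D\to Z$ with $h(\bar z)\restriction D=\bar z$. The composite $g\circ h$ is a Borel witness function defined only on coordinates in $D$, so it transports along the given $\iota^{-1}$ to $\iota^{-1}[Z\restriction D]\subseteq X$; the resulting pointwise statement ``$\forall\bar x\ \neg\psi(f(\bar x),\iota^{-1}[g\circ h](\bar x))$'' is still $\mathbf\Pi^1_1$ (Borel functions suffice), hence absolute, and $(\iota^{-1}[Z\restriction D],C)\leq(X,C)$ forces $\neg\varphi$, giving the contradiction. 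Your extension-of-$\iota$ idea is exactly what the paper does use one level up, in Lemma~\ref{lem:emb13}, where the additional hypothesis $\cf(\lambda)>\omega$ and the fact that $\iota$ there is the transitive collapse guarantee that the extension exists; without such hypotheses you need the uniformization trick.
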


\begin{proof}
    As $X = \iota^{-1}[\iota[X]]$ and $f = \iota^{-1}[\iota[f]]$, we really only need to prove the forward direction of the equivalence above. 
    
    Assume that $i_\lambda(X, C) \Vdash \varphi(f(\dot{\bar s}^\lambda \restriction C))$, but there is an extension $(Z, E) \leq (\iota[X], D)$, such that $i_\lambda(Z, E) \Vdash \neg \varphi(\iota[f](\dot{\bar s}^\lambda \restriction D))$. Let $\varphi(v) = \forall v_0 \psi(v, v_0)$, for a $\Sigma^1_1(r)$-formula $\psi$. By Lemma~\ref{lem:realising}, we can assume that for every $\bar z \in Z$, $\neg \psi(\iota[f](\bar z \restriction C), g(\bar z))$, for some continuous $g \colon Z \to \omega^\omega$. There is a Borel function $h \colon Z \restriction D \to Z$ so that $h(\bar z) \restriction D = \bar z$, for instance by the uniformization theorem for compact sections (see \cite[Theorem 18.18]{Kechris}). Then we have that $(Z \restriction D,D)$ is a condition, $(\iota^{-1}[Z \restriction D], C) \leq (X,C)$ and for every $\bar x \in \iota^{-1}[Z \restriction D]$, $\neg \psi(f(\bar x), \iota^{-1}[g\circ h](\bar x))$. This last statement is absolute, as it can be expressed by a $\mathbf\Pi^1_1$-formula. But then $i_\lambda(\iota^{-1}[Z \restriction D], C)$ forces $\neg \varphi(f(\dot{\bar s}^\lambda \restriction C))$ -- contradiction.
\end{proof}

\begin{thm}[Reflection for $\Sigma^1_3$]\label{thm:ref13}
Let $\lambda$ be any ordinal. Let $\varphi(v)$ be $\Sigma^1_3(r)$, $r \in (\omega^\omega)^V$, and let $\bar s$ be $\mathbb{S}^{*\lambda}$ generic over $V$. Then the following are equivalent: \begin{enumerate}
    \item There is $x \in \omega^\omega \cap (V[\bar s] \setminus V)$, with $V[\bar s] \models \varphi(x)$.
    \item There is $x \in \omega^\omega \cap (V[\bar s \restriction \omega_1] \setminus V)$, with $V[\bar s \restriction \omega_1] \models \varphi(x)$.
\end{enumerate}
\end{thm}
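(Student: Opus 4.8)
The plan is to prove $(2)\Rightarrow(1)$ by Shoenfield absoluteness and $(1)\Rightarrow(2)$ by transferring a witnessing condition into the first $\omega_1$ coordinates via Lemma~\ref{lem:emb12}, and then running a homogeneity-flavoured ``copying'' argument. We may assume $\lambda>\omega_1$, as otherwise $\bar s\restriction\omega_1=\bar s$ and the statement is trivial. Write $\varphi(v)=\exists v_0\,\psi(v,v_0)$ with $\psi$ a $\Pi^1_2(r)$-formula, and recall that a countable support iteration factors as $\mathbb{S}^{*\lambda}\cong\mathbb{S}^{*\omega_1}*\dot{\mathbb{Q}}$, so that $V[\bar s]$ is a generic extension of $V[\bar s\restriction\omega_1]$ with the same ordinals. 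For $(2)\Rightarrow(1)$: if $x\in V[\bar s\restriction\omega_1]\setminus V$ and $v_0\in V[\bar s\restriction\omega_1]$ witness $\psi(x,v_0)$, then $\psi(x,v_0)$ still holds in $V[\bar s]$ by Shoenfield absoluteness, and $x\notin V$ is unchanged; so $V[\bar s]\models\varphi(x)$. This direction is immediate.

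For $(1)\Rightarrow(2)$, start from $x\in V[\bar s]\setminus V$ with $V[\bar s]\models\varphi(x)$. Using that $i_\lambda$ is a dense embedding, Fact~\ref{fact:basic}, and the maximal principle, one obtains (working in $V$) a condition $(X,C)\in\tilde{\mathbb{S}}^{*\lambda}$ and continuous functions $f,g\colon X\to\omega^\omega$ with
\[ i_\lambda(X,C)\Vdash \psi\big(f(\dot{\bar s}^\lambda\restriction C),\,g(\dot{\bar s}^\lambda\restriction C)\big)\ \wedge\ f(\dot{\bar s}^\lambda\restriction C)\notin V, \]
where $f$ reads a witnessing real and $g$ reads a witness for the outer existential quantifier of $\varphi$. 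Combining $f$ and $g$ into a single continuous function $h=\langle f,g\rangle$ (via a fixed recursive homeomorphism $\omega^\omega\times\omega^\omega\cong\omega^\omega$) and regarding $\psi$ as a $\Pi^1_2(r)$-formula of one variable, Lemma~\ref{lem:emb12} transfers this along any order-isomorphism of $C$. Fixing an order-isomorphism $\iota\colon C\to D$ with $D\subseteq\omega_1$ (possible since $C$ is countable), and using in addition the preceding lemma on preservation of ``$\notin V$'', we obtain $i_\lambda(\iota[X],D)\Vdash \psi(\iota[f](\dot{\bar s}^\lambda\restriction D),\iota[g](\dot{\bar s}^\lambda\restriction D))\wedge \iota[f](\dot{\bar s}^\lambda\restriction D)\notin V$.

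The goal is now to show that \emph{every} condition of $\mathbb{S}^{*\omega_1}$ forces ``there is a real $\notin V$ satisfying $\varphi$''; since this is a statement about $V[\bar s\restriction\omega_1]$, it then holds there, which is exactly $(2)$. Given an arbitrary $(Z,E)\in\tilde{\mathbb{S}}^{*\omega_1}$, copy $(\iota[X],D)$ onto coordinates disjoint from the (countable, hence bounded) set $E$: put $D'=\{\sup E+1+\xi:\xi\in D\}\subseteq\omega_1$, let $\iota'\colon C\to D'$ be the order-isomorphism, and set $W=\{\bar w\in(2^\omega)^{E\cup D'}:\bar w\restriction E\in Z,\ \bar w\restriction D'\in\iota'[X]\}$. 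A routine check of the perfect-tree conditions (splitting on whether $\alpha\in E$ or $\alpha\in D'$) shows $(W,E\cup D')\in\tilde{\mathbb{S}}^{*\omega_1}$, $(W,E\cup D')\leq (Z,E)$, and $(W,E\cup D')\leq(\iota'[X],D')$, so $i_\lambda(W,E\cup D')$ forces $\psi(\iota'[f](\dot{\bar s}^\lambda\restriction D'),\iota'[g](\dot{\bar s}^\lambda\restriction D'))\wedge \iota'[f](\dot{\bar s}^\lambda\restriction D')\notin V$. Now use $\mathbb{S}^{*\lambda}\cong\mathbb{S}^{*\omega_1}*\dot{\mathbb{Q}}$: whenever $\bar t$ is $\mathbb{S}^{*\omega_1}$-generic over $V$ with $i_{\omega_1}(W,E\cup D')$ in its generic filter, extend it to an $\mathbb{S}^{*\lambda}$-generic $\bar s'$ with $\bar s'\restriction\omega_1=\bar t$; since $i_\lambda(W,E\cup D')$ has support $\subseteq\omega_1$ it lies in the generic filter of $\bar s'$, so $V[\bar s']\models\psi(a,b)$ for $a:=\iota'[f](\bar t\restriction D')\notin V$ and $b:=\iota'[g](\bar t\restriction D')\in V[\bar t]$. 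By Shoenfield absoluteness $V[\bar t]\models\psi(a,b)$, hence $V[\bar t]\models\varphi(a)$ with $a\notin V$. Thus $i_{\omega_1}(W,E\cup D')\Vdash_{\mathbb{S}^{*\omega_1}}$ ``$\exists$ real $\notin V$ satisfying $\varphi$''; as $(Z,E)$ was arbitrary, this is forced by every condition, completing the proof.

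The main obstacle is precisely that the transferred condition $(\iota[X],D)$ in general does \emph{not} belong to the generic filter determined by $\bar s\restriction\omega_1$ (note $\bar s\restriction D\neq\iota[\bar s\restriction C]$ unless $\iota=\mathrm{id}$), so one cannot read the witness off $\bar s\restriction\omega_1$ directly; the copying argument is what bridges this gap, upgrading ``some condition forces it'' to ``all conditions force it''. A secondary point requiring care is that Lemma~\ref{lem:emb12} only transfers $\Pi^1_2$ matrices, so the outer $\Sigma^1_3$-witness must be absorbed into a continuous function before applying it, with the existential quantifier recovered only afterwards — inside a genuine $\mathbb{S}^{*\omega_1}$-extension, via the factorisation and Shoenfield absoluteness.
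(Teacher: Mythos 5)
Your proposal is correct and follows essentially the same route as the paper: extract $(X,C)$, $f$, $g$ with the forcing statement, transfer along an order-isomorphism into $\omega_1$ via Lemma~\ref{lem:emb12} (together with the preceding lemma for ``$\notin V$''), descend from $\mathbb{S}^{*\lambda}$ to $\mathbb{S}^{*\omega_1}$ by the factorisation and Shoenfield absoluteness, and conclude that the trivial condition forces the existence of a new real satisfying $\varphi$. The only difference is that where the paper simply cites the homogeneity of $\mathbb{S}^{*\omega_1}$ for this last step, you prove the required density instance by hand with the disjoint-copy construction $(W,E\cup D')$ --- a correct unwinding of that appeal.
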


\begin{proof}
    The implication from (2) to (1) follows trivially from the upwards-absoluteness of $\Sigma^1_3$ formulas. Let $\varphi(v) = \exists v_0 \psi(v, v_0)$, where $\psi$ is $\Pi^1_2(r)$. Then (1) implies that there are $(X, C) \in \tilde{\mathbb{S}}^{*\lambda}$, a continuous function $f \colon X \to \omega^\omega$, and continuous $g \colon X \to \omega^\omega$, so that $$i_\lambda(X,C) \Vdash \psi(f(\dot{\bar s}^{\lambda} \restriction C), g(\dot{\bar s}^{\lambda} \restriction C)) \wedge f(\dot{\bar s}^{\lambda} \restriction C) \notin V.$$
    Consider the unique order-isomorphism $\iota \colon C \to \alpha$, where $\alpha < \omega_1$ is the order-type of $C$. According to Lemma~\ref{lem:emb12}, also $$i_{\lambda}(\iota[X],\alpha) \Vdash_{\mathbb{S}^{*\lambda}} \psi(\iota[f](\dot{\bar s}^{\alpha}), \iota[g](\dot{\bar s}^{\alpha})) \wedge \iota[f](\dot{\bar s}^{\alpha}) \notin V.$$

    By Shoenfield's absoluteness theorem, the same is forced relative to $\mathbb{S}^{*\omega_1}$. This finishes the proof: $\mathbb{S}^{*\omega_1}$ is homogeneous, so if any condition forces a new real satisfying $\varphi(v)$, so does the trivial condition. 
\end{proof}

The following is an immediate corollary: 

\begin{cor}[$\Sigma^1_3$ absoluteness]\label{cor:abs13}
    Let $\lambda$ be any ordinal and $\bar s$ be $\mathbb{S}^{*\lambda}$ generic over $V$. Then $\Sigma^1_3$ formulas are absolute between $V[\bar s \restriction \omega_1]$ and $V[\bar s]$.
\end{cor}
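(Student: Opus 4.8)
The plan is to deduce this from Theorem~\ref{thm:ref13}. The case $\lambda \le \omega_1$ is trivial, since then $\bar s \restriction \omega_1 = \bar s$; so I assume $\lambda > \omega_1$. As $\Sigma^1_3$ formulas are upwards absolute between transitive models of enough of $\ZFC$, only the downward implication needs an argument. So suppose $V[\bar s] \models \varphi(p)$, where $\varphi$ is $\Sigma^1_3$ with parameters in $V[\bar s \restriction \omega_1]$ and $p \in V[\bar s \restriction \omega_1] \cap \omega^\omega$, and aim to conclude $V[\bar s \restriction \omega_1] \models \varphi(p)$.

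The first step is to absorb all the parameters into a countable-stage ground model. By Fact~\ref{fact:basic} applied to $\mathbb{S}^{*\omega_1}$, each of the finitely many reals among $p$ and the parameters of $\varphi$ lies in $V[\bar s \restriction C]$ for some countable $C \subseteq \omega_1$; picking $\gamma < \omega_1$ above all of them and setting $W := V[\bar s \restriction \gamma]$, the statement $\varphi(p)$ becomes, over $W$, a lightface $\Sigma^1_3$ statement in a real parameter lying in $W$. The second step is the standard tail-factorization of countable support iterations: $\mathbb{S}^{*\gamma}$ is an initial-segment complete suborder of $\mathbb{S}^{*\lambda}$ whose quotient, computed in $W$, is a countable support iteration of Sacks forcing of length $\lambda^* := \otp([\gamma,\lambda)) \ge \omega_1$. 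Writing $\bar t$ for the quotient generic, this identifies $V[\bar s] = W[\bar t]$ and, since $\otp([\gamma,\omega_1)) = \omega_1$, also $V[\bar s \restriction \omega_1] = W[\bar t \restriction \omega_1]$. We are thus exactly in the situation of Theorem~\ref{thm:ref13}, with $W$ in place of $V$ and $\bar t$ in place of $\bar s$ (the theorem, though stated over $V$, holds over any model of $\ZFC$).

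It remains to reconcile ``$\varphi(p)$ holds'' with the ``there is a new real satisfying it'' form of Theorem~\ref{thm:ref13}, which I would do by a dummy-real trick: apply the theorem to the $\Sigma^1_3$ formula $\theta(v)$ over $W$ that expresses $\varphi(p) \wedge v \in 2^\omega$. Since $\lambda^* \ge 1$, both $W[\bar t]$ and $W[\bar t \restriction \omega_1]$ contain a real not in $W$ — for instance the first Sacks real of the quotient iteration — so in either model the assertion ``there is a real outside $W$ satisfying $\theta$'' is equivalent to ``$\varphi(p)$ holds''. Theorem~\ref{thm:ref13} then gives
\[
\begin{aligned}
V[\bar s] \models \varphi(p)
&\iff W[\bar t] \models \exists v\,(v \notin W \wedge \theta(v)) \\
&\iff W[\bar t \restriction \omega_1] \models \exists v\,(v \notin W \wedge \theta(v)) \\
&\iff V[\bar s \restriction \omega_1] \models \varphi(p),
\end{aligned}
\]
as desired.

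The only point I expect to require genuine care is the factorization bookkeeping in the second step: that $\mathbb{S}^{*\gamma}$ really sits inside $\mathbb{S}^{*\lambda}$ as an initial segment with quotient again a Sacks iteration, and that truncating the quotient generic $\bar t$ at its first $\omega_1$ coordinates recovers precisely $V[\bar s \restriction \omega_1]$ over $W$ (which comes down to $\otp([\gamma,\omega_1)) = \omega_1$). Both are routine for countable support iterations, and everything else is formal.
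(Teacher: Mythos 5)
Your proof is correct and follows essentially the same route as the paper: factor the iteration at a countable stage $\gamma$ absorbing all parameters, and apply Theorem~\ref{thm:ref13} over $W=V[\bar s\restriction\gamma]$ to the tail, using $\otp([\gamma,\omega_1))=\omega_1$. Your dummy-real trick just makes explicit a point the paper's proof leaves implicit (Theorem~\ref{thm:ref13} speaks of a \emph{new} real satisfying $\varphi$, not of the truth of a fixed $\Sigma^1_3$ sentence), and it handles that point correctly.
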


\begin{proof}
    $\Sigma^1_3$-formulas are upwards-absolute in general. On the other hand, suppose $x \in V[\bar s \restriction \omega_1]$ and $V[\bar s] \models \varphi(x)$, for a $\Sigma^1_3$-formula $\varphi$. Then there is $\alpha < \omega_1$, so that $x \in V[\bar s \restriction \alpha]$. Consider the tail of the iteration after stage $\alpha$. This is an iteration of length $\lambda'$, where $\lambda = \alpha + \lambda'$, over $V[\bar s \restriction \alpha]$. According to Theorem~\ref{thm:ref13}, replacing $V$ with $V[\bar s \restriction \alpha]$, the $\Sigma^1_3(x)$-sentence $\varphi(x)$ reflects to $V[\bar s \restriction \alpha][\bar s \restriction [\alpha, \alpha + \omega_1)] = V[\bar s \restriction \omega_1]$, as required.
\end{proof}

The reflection theorem above is optimal in general. For instance, if $V = L$ and $\lambda$ is a successor ordinal, say $\lambda = \omega_1 + 1$, then there is a maximal $L$-degree in $V[\bar s]$. The set of reals of maximal $L$-degree is a $\Pi^1_3$ set. But there is no maximal degree in $V[\bar s \restriction \omega_1]$. 

On the other hand, if $\lambda$ has uncountable cofinality -- which is exactly to say that there is no maximal degree in $V[\bar s]$, see Fact~\ref{fact:degrees} -- then we may obtain reflection up to $\Sigma^1_4$, see Theorem~\ref{thm:ref14} below. This is then optimal for any $\lambda > \omega_1$. Namely, if $V = L$, in $V[\bar s]$, there is a non-empty $\Pi^1_4$ set of reals of degree beyond $\omega_1$ -- namely, the set of $x$ so that $$\forall \langle y_n :n \in \omega \rangle \in (2^\omega)^\omega ( \forall n (y_n <_L x) \rightarrow \exists y ( \forall n (y_n <_L y) \wedge y <_L x) ).$$

\begin{lemma}\label{lem:emb13}
    Let $\cf(\lambda) > \omega$ and $\varphi(v)$ be $\Pi^1_3(r)$, $r \in (\omega^\omega)^V$. Let $(X, C) \in \tilde{\mathbb{S}}^{*\lambda}$, $f \colon X \to \omega^\omega$ continuous and $\iota \colon C \to \alpha$ be the collapse of $C$. Then $$i_\lambda(X, C) \Vdash \varphi(f(\dot{\bar s}^\lambda \restriction C)) \text{ implies } i_\lambda(\iota[X], \alpha) \Vdash \varphi(\iota[f](\dot{\bar s}^\alpha)).$$
\end{lemma}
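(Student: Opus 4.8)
The goal is to show, for $\varphi(v)$ a $\Pi^1_3(r)$-formula with $r \in V$, that if $i_\lambda(X,C) \Vdash \varphi(f(\dot{\bar s}^\lambda \restriction C))$, where $\iota \colon C \to \alpha$ is the transitive collapse, then $i_\lambda(\iota[X],\alpha) \Vdash \varphi(\iota[f](\dot{\bar s}^\alpha))$. The plan is to argue by contraposition together with the previously established tools, the way Lemma~\ref{lem:emb12} was proved, but now exploiting the uncountable cofinality hypothesis $\cf(\lambda) > \omega$ to reduce the inner $\Sigma^1_3$-quantifier from a real in the full extension to a real appearing at a bounded (hence isomorphically relocatable) stage of the iteration.

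Let me think about this carefully. Write $\varphi(v) = \forall v_0\, \psi(v, v_0)$ with $\psi$ being $\Sigma^1_2(r)$. Suppose toward a contradiction that $i_\lambda(X,C) \Vdash \varphi(f(\dot{\bar s}^\lambda \restriction C))$ but some $(Z,E) \le (\iota[X],\alpha)$ forces $\neg\varphi(\iota[f](\dot{\bar s}^\lambda\restriction\alpha))$, i.e. forces $\exists v_0\, \neg\psi(\iota[f](\dot{\bar s}^\lambda\restriction\alpha), v_0)$. By Fact~\ref{fact:basic} we may shrink $(Z,E)$ and find a continuous $g\colon Z \to \omega^\omega$ with $i_\lambda(Z,E) \Vdash \neg\psi(\iota[f](\dot{\bar s}^\lambda\restriction\alpha), g(\dot{\bar s}^\lambda\restriction E))$. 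Now $\neg\psi$ is $\Pi^1_2(r)$, and $E \subseteq \alpha < \omega_1$ is countable. The key move: since $\cf(\lambda) > \omega$ and $E$ is countable, the order-isomorphic copy of $E$ can be placed as an initial segment of the iteration. Here I would apply the uncountable-cofinality fact from Fact~\ref{fact:degrees}, namely that in $V[\bar s]$ every real lies in some $V[\bar s\restriction C_\beta]$ for $\beta$ not of uncountable cofinality — actually more directly, I want to move the pair $(\alpha, E)$ via an order-isomorphism $\kappa$ of $E \cup \alpha$ (whose order type is some countable $\gamma < \omega_1$) into $\omega_1$. Letting $\kappa$ collapse $E \cup \alpha$ onto $\gamma$, by Lemma~\ref{lem:emb12} applied to the $\Pi^1_2$ formula $\neg\psi$ we transport the forcing statement to $i_\lambda(\kappa[Z], \gamma)$, and since everything now lives below $\omega_1$ we may as well work in $\mathbb{S}^{*\omega_1}$ and use Shoenfield absoluteness and homogeneity exactly as in Theorem~\ref{thm:ref13}. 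The point of $\cf(\lambda) > \omega$ is precisely that it guarantees $\alpha$ together with any countable $E$ still collapses below $\omega_1$ and that the tail structure does not introduce a maximal degree obstructing the relocation.

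**Executing the steps.**

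So concretely the steps are: (1) rewrite $\varphi$ as $\forall v_0\,\psi$ with $\psi \in \Sigma^1_2(r)$; (2) assume for contradiction a witnessing extension $(Z,E)$ of $(\iota[X],\alpha)$ forcing $\neg\varphi$, and via Fact~\ref{fact:basic} extract a continuous $g$ on $Z$ with $i_\lambda(Z,E) \Vdash \neg\psi(\iota[f](\dot{\bar s}^\lambda\restriction\alpha), g(\dot{\bar s}^\lambda\restriction E))$, noting $\neg\psi \in \Pi^1_2(r)$; (3) let $\gamma < \omega_1$ be the order type of $E \cup \alpha$ and $\kappa \colon E\cup\alpha \to \gamma$ its collapse; apply Lemma~\ref{lem:emb12} to the $\Pi^1_2$ formula $\neg\psi$ and to the condition $(Z, E)$ sitting inside the iteration, obtaining $i_\lambda(\kappa[Z],\gamma) \Vdash \neg\psi(\kappa[\iota[f]](\dot{\bar s}^\lambda\restriction\kappa[\alpha]), \kappa[g](\dot{\bar s}^\lambda\restriction\kappa[E]))$ — here I should be careful that $\kappa\restriction E$ and $\kappa\restriction\alpha$ induce consistent relocations of both the "input" coordinates $\alpha$ and the "witness" coordinates $E$, but since $\kappa$ is a single order-isomorphism of $E \cup \alpha$ this is automatic and $\kappa[\iota[f]] = (\kappa\circ\iota)[f]$; (4) observe that $\gamma < \omega_1$, so this is a statement about conditions living entirely within $\mathbb{S}^{*\omega_1}$, hence by Shoenfield absoluteness it is forced by $\mathbb{S}^{*\omega_1}$ as well; (5) meanwhile, starting from the hypothesis $i_\lambda(X,C)\Vdash \varphi(f(\dot{\bar s}^\lambda\restriction C))$, compose the collapse $\iota\colon C \to \alpha$ with $\kappa\restriction\alpha$ to get an order-isomorphism $C \to \kappa[\alpha] \subseteq \gamma < \omega_1$, and by Lemma~\ref{lem:emb12} again $i_\lambda$ of the relocated condition forces $\varphi$ of the relocated real; (6) now both relocated conditions live in $\mathbb{S}^{*\omega_1}$ over the same relocated input coordinates $\kappa[\alpha]$; by homogeneity of $\mathbb{S}^{*\omega_1}$ the statement "$\varphi$ holds of $(\kappa\circ\iota)[f](\dot{\bar s}\restriction\kappa[\alpha])$" is decided the same way by all conditions with that prescribed behaviour on $\kappa[\alpha]$ — and here I need a small argument that the two relocated conditions are compatible over the $\kappa[\alpha]$-coordinates, so that one cannot force $\varphi$ and the other $\neg\varphi$ of the same definable real; contradiction.

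**Main obstacle.**

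The step I expect to be delicate is (6), reconciling the two relocated conditions: $\kappa[\iota[X]]$ lives on coordinates $\kappa[\alpha]$, while $\kappa[Z]$ lives on coordinates $\kappa[E \cup \alpha] = \gamma$ and restricts to something below $\kappa[\iota[X]]$ on $\kappa[\alpha]$ (since $(Z,E)\le(\iota[X],\alpha)$ gives $Z\restriction\alpha\subseteq\iota[X]$). So in fact $\kappa[Z]$ is already an extension of $\kappa[\iota[X]]$, and the real $\kappa[\iota[f]](\dot{\bar s}\restriction\kappa[\alpha])$ is the same object from the point of view of both. Thus $i_\lambda(\kappa[Z],\gamma)$ forces $\neg\varphi$ of it while $i_\lambda(\kappa[\iota[X]],\kappa[\alpha])$ forces $\varphi$ of it — but the former extends the latter, an outright contradiction, no homogeneity even needed at this last stage. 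The genuine subtlety, then, is bookkeeping in step (3)–(5): making sure a single order-isomorphism simultaneously relocates the domain coordinates of $f$, the extra witness coordinates of $g$, and is compatible with the collapse $\iota$ of $C$, so that Lemma~\ref{lem:emb12} can legitimately be applied twice with a coherent choice. The role of $\cf(\lambda) > \omega$ is that without it $\alpha$ could already be $\omega_1$ (or cofinal in it) and then $E \cup \alpha$ need not collapse below $\omega_1$, so Shoenfield would not directly apply and the argument of Theorem~\ref{thm:ref13} would break — consistent with the stated optimality via the $\Pi^1_4$ maximal-degree example.
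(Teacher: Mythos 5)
There is a genuine gap, and it sits exactly at your step (5). You decompose $\varphi = \forall v_0\,\psi$ with $\psi \in \Sigma^1_2(r)$, take a putative counterexample $(Z,E) \le (\iota[X],\alpha)$ forcing $\neg\psi(\iota[f](\dot{\bar s}^\alpha), g(\dot{\bar s}^\lambda\restriction E))$, and collapse $E$ down below $\omega_1$ via $\kappa$. That part is fine (Lemma~\ref{lem:emb12} applies to the $\Pi^1_2$ formula $\neg\psi$). But to reach a contradiction you then need the hypothesis $i_\lambda(X,C) \Vdash \varphi(f(\dot{\bar s}^\lambda\restriction C))$ transported to the collapsed coordinates, and you invoke ``Lemma~\ref{lem:emb12} again'' for this. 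That is not legitimate: $\varphi$ is $\Pi^1_3$, and Lemma~\ref{lem:emb12} is only proved for $\Pi^1_2$ formulas. Worse, since $\alpha \subseteq E$ forces $\alpha$ to be an initial segment of $E$, your $\kappa$ is the identity on $\alpha$, so the ``relocated'' statement $i_\lambda(\iota[X],\alpha) \Vdash \varphi(\iota[f](\dot{\bar s}^\alpha))$ is verbatim the conclusion of Lemma~\ref{lem:emb13}. Your argument is circular.

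The fix is to relocate in the opposite direction: do not bring the hypothesis down to the counterexample, but lift the counterexample up to the hypothesis. Extend $\iota^{-1}\colon \alpha \to C$ to an order-isomorphism $\eta \colon E \to E'$ for some $E' \subseteq \lambda$ (the new coordinates of $E\setminus\alpha$ must land above $\sup C$, and this is precisely where $\cf(\lambda)>\omega$ is used: it guarantees $\sup C < \lambda$ with uncountably many ordinals above, so there is room — not, as you suggest, anything about $\alpha$ or $E\cup\alpha$ failing to be countable, which they always are). Applying Lemma~\ref{lem:emb12} to the $\Pi^1_2$ formula $\neg\psi$ along $\eta$ gives $i_\lambda(\eta[Z],E') \Vdash \neg\psi(f(\dot{\bar s}^\lambda\restriction C), \eta[g](\dot{\bar s}^\lambda\restriction E'))$, and since $(\eta[Z],E') \le (X,C)$ this directly contradicts the hypothesis that $i_\lambda(X,C)$ forces $\varphi(f(\dot{\bar s}^\lambda\restriction C))$. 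No Shoenfield step, no homogeneity, and no relocation of any $\Pi^1_3$ statement is needed.
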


\begin{proof}
    Let $\varphi(v) = \forall v_0 \psi(v,v_0)$, where $\psi$ is $\Sigma^1_2(r)$. Suppose that $i_\lambda(\iota[X], \alpha)$ does not force $\varphi(\iota[f](\dot{\bar s}^\alpha))$. Then there is $(Y, D) \leq (\iota[X], \alpha)$ and a continuous function $g \colon Y \to \omega^\omega$ so that $$i_\lambda(Y,D) \Vdash \neg \psi(\iota[f](\dot{\bar s}^\alpha), g(\dot{\bar s}^{*\lambda} \restriction D)).$$
    Now we can extend $\iota^{-1}$ to an order-isomorphism $\eta \colon D \to E$, for some $E \subseteq \lambda$. By Lemma~\ref{lem:emb12}, $i_\lambda(\eta[Y], E) \Vdash \neg \psi(f(\dot{\bar s}^\lambda \restriction C), g(\dot{\bar s}^{*\lambda} \restriction E))$. As $(\eta[Y], E) \leq (X, C)$, this means that $i_\lambda(X,C)$ could not have forced $\varphi(f(\dot{\bar s}^\lambda \restriction C))$.
\end{proof}

The converse statement of the lemma certainly is false. For example, forcing over $L$, consider the $\Pi^1_3$-formula $\varphi(x,y) = \forall z ( z \leq_L x \vee y \leq_L z)$. The trivial condition forces $\varphi(\dot s_0, \dot s_1)$ and $\neg \varphi(\dot s_0, \dot s_2)$.

\begin{thm}[Reflection for $\Sigma^1_4$]\label{thm:ref14}
Let $\cf(\lambda) > \omega$ and $\varphi(v)$ be $\Sigma^1_4(r)$, for $r \in (\omega^\omega)^V$. Let $\bar s$ be $\mathbb{S}^{*\lambda}$ generic over $V$. Then the following are equivalent: \begin{enumerate}
    \item There is $x \in \omega^\omega \cap (V[\bar s] \setminus V)$, with $V[\bar s] \models \varphi(x)$.
    \item There is $x \in \omega^\omega \cap (V[\bar s \restriction \omega_1] \setminus V)$, with $V[\bar s \restriction \omega_1] \models \varphi(x)$.
\end{enumerate}
\end{thm}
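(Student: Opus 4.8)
The plan is to mirror the proof of Theorem~\ref{thm:ref13} one level higher in the projective hierarchy, replacing Lemma~\ref{lem:emb12} by Lemma~\ref{lem:emb13} (this is exactly where $\cf(\lambda)>\omega$ is used) and Shoenfield's absoluteness theorem by Corollary~\ref{cor:abs13}. Since $\cf(\lambda)>\omega$ implies $\lambda\geq\omega_1$, the restriction $\bar s\restriction\omega_1$ makes sense; moreover if $\lambda=\omega_1$ then $V[\bar s\restriction\omega_1]=V[\bar s]$ and there is nothing to show, so assume $\lambda>\omega_1$. Write $\varphi(v)=\exists v_0\,\psi(v,v_0)$ with $\psi$ a $\Pi^1_3(r)$-formula. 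For the implication (2)$\Rightarrow$(1), given $x\in\omega^\omega\cap(V[\bar s\restriction\omega_1]\setminus V)$ and $v_0\in V[\bar s\restriction\omega_1]$ with $V[\bar s\restriction\omega_1]\models\psi(x,v_0)$, Corollary~\ref{cor:abs13} (which, being stated for $\Sigma^1_3$ formulas, also gives absoluteness of $\Pi^1_3$ formulas) applied to $\psi(x,v_0)$ gives $V[\bar s]\models\psi(x,v_0)$, hence $V[\bar s]\models\varphi(x)$, while still $x\in V[\bar s]\setminus V$.

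For (1)$\Rightarrow$(2), exactly as in the proof of Theorem~\ref{thm:ref13} (using only Fact~\ref{fact:basic}, so the higher complexity of $\psi$ is irrelevant here), (1) produces a condition $(X,C)\in\tilde{\mathbb{S}}^{*\lambda}$ and continuous $f,g\colon X\to\omega^\omega$ with $i_\lambda(X,C)\Vdash_{\mathbb{S}^{*\lambda}}\psi(f(\dot{\bar s}^\lambda\restriction C),g(\dot{\bar s}^\lambda\restriction C))\wedge f(\dot{\bar s}^\lambda\restriction C)\notin V$. Let $\iota\colon C\to\alpha$ be the collapse of $C$, with $\alpha<\omega_1$. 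Coding the pair $(f,g)$ into a single continuous function and $\psi$ into a $\Pi^1_3(r)$-formula in one variable, and noting that $\iota$ commutes with this coding, Lemma~\ref{lem:emb13} (whose hypothesis $\cf(\lambda)>\omega$ we have) together with the first lemma of Section~\ref{sec:reflection} (preservation of newness under the operators $\iota[\,\cdot\,]$) give
$$i_\lambda(\iota[X],\alpha)\Vdash_{\mathbb{S}^{*\lambda}}\psi(\iota[f](\dot{\bar s}^\alpha),\iota[g](\dot{\bar s}^\alpha))\wedge \iota[f](\dot{\bar s}^\alpha)\notin V.$$

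It remains to transfer this from $\mathbb{S}^{*\lambda}$ to $\mathbb{S}^{*\omega_1}$, and I expect this to be the step requiring the most care: one level down it was an immediate consequence of Shoenfield's theorem, but for the $\Pi^1_3$ formula $\psi$ it must instead be routed through Corollary~\ref{cor:abs13}. Since $\alpha<\omega_1$, the pair $(\iota[X],\alpha)$ is also a condition of $\tilde{\mathbb{S}}^{*\omega_1}$, and $\iota[f],\iota[g]$ depend only on coordinates below $\alpha$. That $i_{\omega_1}(\iota[X],\alpha)\Vdash_{\mathbb{S}^{*\omega_1}}\iota[f](\dot{\bar s}^\alpha)\notin V$ is immediate, since this only asserts the non-existence of a subcondition of support $\alpha$ on which $\iota[f]$ is constant, a statement not depending on the ambient iteration. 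For the $\psi$-part, fix an $\mathbb{S}^{*\omega_1}$-generic $\bar t$ over $V$ with $\bar t\restriction\alpha\in\iota[X]$ and extend it — via the usual factoring $\mathbb{S}^{*\lambda}\cong\mathbb{S}^{*\omega_1}\ast\dot{\mathbb{S}}^{*\lambda'}$, $\lambda'=\otp([\omega_1,\lambda))$ — to an $\mathbb{S}^{*\lambda}$-generic $\bar u$ over $V$ with $\bar u\restriction\omega_1=\bar t$. Putting $x_0=\iota[f](\bar t\restriction\alpha)$, $x_1=\iota[g](\bar t\restriction\alpha)$ (so $x_0,x_1\in V[\bar t\restriction\alpha]\subseteq V[\bar t]$), the displayed formula gives $V[\bar u]\models\psi(x_0,x_1)$, whence Corollary~\ref{cor:abs13} (for $V$ and the iteration of length $\lambda$, using $\bar u\restriction\omega_1=\bar t$) yields $V[\bar t]\models\psi(x_0,x_1)$. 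As $\bar t$ was arbitrary, $i_{\omega_1}(\iota[X],\alpha)\Vdash_{\mathbb{S}^{*\omega_1}}\psi(\iota[f](\dot{\bar s}^\alpha),\iota[g](\dot{\bar s}^\alpha))\wedge\iota[f](\dot{\bar s}^\alpha)\notin V$. So some condition of $\mathbb{S}^{*\omega_1}$ forces the existence of a real outside $V$ satisfying $\varphi$; by homogeneity of $\mathbb{S}^{*\omega_1}$, so does the trivial condition, and since $\bar s\restriction\omega_1$ is $\mathbb{S}^{*\omega_1}$-generic over $V$, $V[\bar s\restriction\omega_1]$ contains such a real, which is (2).
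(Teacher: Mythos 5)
Your proposal is correct and follows essentially the same route as the paper: decompose $\varphi$ as $\exists v_0\,\psi$ with $\psi\in\Pi^1_3(r)$, get a continuous reading on a condition, collapse the support via Lemma~\ref{lem:emb13}, transfer to $\mathbb{S}^{*\omega_1}$, and finish by homogeneity, with (2)$\Rightarrow$(1) via Corollary~\ref{cor:abs13}. The only cosmetic difference is that for the transfer step the paper invokes the general downward absoluteness of $\Pi^1_3$-formulas from $V[\bar u]$ to $V[\bar t]$ (a consequence of Shoenfield), whereas you route it through Corollary~\ref{cor:abs13}; both are valid and your version just spells out the extension of the generic explicitly.
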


\begin{proof}
   For the implication from (2) to (1), simply note that Corollary~\ref{cor:abs13} implies the upwards-absoluteness of $\Sigma^1_4$-formulas from $V[\bar s \restriction \omega_1]$ to $V[\bar s]$.

    For the other direction, we argue as in Theorem~\ref{thm:ref13}. Let $\varphi(v) = \exists v_0 \psi(v,v_0)$, for a $\Pi^1_3(r)$-formula $\psi$.  We have $(X, C) \in \tilde{\mathbb{S}}^{*\lambda}$, a continuous function $f \colon X \to \omega^\omega$, and continuous $g \colon X \to \omega^\omega$, so that $$i_\lambda(X,C) \Vdash_{\mathbb{S}^{*\lambda}} \psi(f(\dot{\bar s}^{\lambda} \restriction C), g(\dot{\bar s}^{\lambda} \restriction C)) \wedge f(\dot{\bar s}^{\lambda} \restriction C) \notin V.$$ Given the collapsing function $\iota \colon C \to \alpha$, where $\alpha$ is the order-type of $C$, $$i_{\lambda}(\iota[X],\alpha) \Vdash_{\mathbb{S}^{*\lambda}} \psi(\iota[f](\dot{\bar s}^{\alpha}), \iota[g](\dot{\bar s}^{\alpha})) \wedge \iota[f](\dot{\bar s}^{\alpha})\notin V,$$ by Lemma~\ref{lem:emb13}. It follows from the downwards absoluteness of $\Pi^1_3$-formulas that the same is forced relative to $\mathbb{S}^{*\omega_1}$. Again, this finishes the proof.    
\end{proof}

Just as before, we obtain: 

\begin{cor}[$\Sigma^1_4$ absoluteness]\label{cor:abs14}
    Let $\lambda$ be any ordinal and $\bar s$ be $\mathbb{S}^{*\lambda}$ generic over $V$. Then $\Sigma^1_4$ formulas are absolute between $V[\bar s \restriction \omega_1]$ and $V[\bar s]$.
\end{cor}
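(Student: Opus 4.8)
The plan is to run the proof of Corollary~\ref{cor:abs13} with $\Sigma^1_4$ in place of $\Sigma^1_3$, replacing the appeal to Shoenfield's theorem by an appeal to $\Sigma^1_3$-absoluteness (Corollary~\ref{cor:abs13} itself) and the appeal to Theorem~\ref{thm:ref13} by an appeal to Theorem~\ref{thm:ref14}.

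The upwards direction is free: as was already observed in the proof of Theorem~\ref{thm:ref14}, writing a $\Sigma^1_4$-formula as $\exists v_0\,\psi(v,v_0)$ with $\psi$ being $\Pi^1_3$, and using that $\Pi^1_3$-formulas are absolute between $V[\bar s \restriction \omega_1]$ and $V[\bar s]$ by Corollary~\ref{cor:abs13}, any witness to a $\Sigma^1_4$-instance that exists in $V[\bar s \restriction \omega_1]$ still works in $V[\bar s]$.

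For the downwards direction, I would take $x \in V[\bar s \restriction \omega_1]$ and a $\Sigma^1_4(r)$-formula $\varphi$, $r \in V$, with $V[\bar s] \models \varphi(x)$, fix $\alpha < \omega_1$ with $x \in V[\bar s \restriction \alpha]$, and view $\mathbb{S}^{*\lambda}$ as $\mathbb{S}^{*\alpha}$ followed by a countable support Sacks iteration of length $\lambda'$ over $V' := V[\bar s \restriction \alpha]$, where $\lambda = \alpha + \lambda'$ and the tail generic $\bar s' := \bar s \restriction [\alpha,\lambda)$ satisfies $V'[\bar s'] = V[\bar s]$ and $V'[\bar s' \restriction \omega_1] = V[\bar s \restriction \omega_1]$ (as $\alpha + \omega_1 = \omega_1$). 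Writing $\varphi(v) = \exists v_0\,\psi(v,v_0)$ with $\psi$ being $\Pi^1_3$, I would fix a witness $y \in V[\bar s]$ with $V[\bar s] \models \psi(x,y)$ and distinguish two cases. If $y \in V'$, then $\psi(x,y)$ is a $\Pi^1_3$-statement with parameters in $V[\bar s \restriction \omega_1]$ that holds in $V[\bar s]$, hence holds in $V[\bar s \restriction \omega_1]$ by Corollary~\ref{cor:abs13}, and we are done. If $y \notin V'$, then $z := \langle x,y\rangle$ is a real of $V'[\bar s'] \setminus V'$ satisfying the formula $\chi(v) \equiv \psi((v)_0,(v)_1) \wedge (v)_0 = x$, which is $\Pi^1_3(r,x)$ and a fortiori $\Sigma^1_4(r,x)$; applying Theorem~\ref{thm:ref14} over $V'$, to the tail iteration and the formula $\chi$, yields a real $z' \in V'[\bar s' \restriction \omega_1] \setminus V' = V[\bar s \restriction \omega_1] \setminus V'$ with $V[\bar s \restriction \omega_1] \models \chi(z')$, so $(z')_0 = x$ and $V[\bar s \restriction \omega_1] \models \psi(x,(z')_1)$, giving $V[\bar s \restriction \omega_1] \models \varphi(x)$.

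I expect the only point requiring care to be the cofinality hypothesis of Theorem~\ref{thm:ref14}: it needs the tail iteration to have length of uncountable cofinality, and since only a countable initial segment of length $\alpha < \omega_1$ is removed, this amounts to $\cf(\lambda) > \omega$, which is thus the regime in which this argument runs (and when $\lambda \le \omega_1$ the statement is vacuous, since then $\bar s \restriction \omega_1 = \bar s$). Beyond this, everything is the bookkeeping already present in Corollary~\ref{cor:abs13}, together with the minor device — implicit there too — of coding the parameter $x$ into a new real so that Theorem~\ref{thm:ref14}, which is stated about new reals, can be brought to bear.
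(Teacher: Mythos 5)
Your proof is correct and follows essentially the same route as the paper, which simply runs the proof of Corollary~\ref{cor:abs13} with Theorem~\ref{thm:ref14} in place of Theorem~\ref{thm:ref13}; your extra device of coding the parameter and witness into a single new real so that the reflection theorem (which speaks of \emph{new} reals satisfying a formula) literally applies is a detail the paper leaves implicit, and you handle it correctly, including the case where the witness already lies in the intermediate model. Your caveat about the cofinality hypothesis is also well taken: as stated, ``any ordinal $\lambda$'' cannot be right, since for $\lambda = \omega_1+1$ over $L$ the $\Sigma^1_4$ sentence asserting the existence of a real of maximal $L$-degree holds in $V[\bar s]$ but fails in $V[\bar s \restriction \omega_1]$ (this is the paper's own optimality example after Theorem~\ref{thm:ref13}), so the corollary should be read with $\cf(\lambda) > \omega$ (the case $\lambda \le \omega_1$ being trivial), exactly as you say.
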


\begin{remark}
    It is conceivable that $\omega_1$ above can be replaced by $\delta^1_3(r)$ and $\delta^1_4(r)$ respectively, although we have not put much thought into it. Recall that $\delta^1_n(r)$ is the least ordinal not order-isomorphic to a $\Delta^1_n(r)$-definable well order on $\omega$.
\end{remark}

\section{Coding digression}\label{sec:coding}

For $z_0, z_1 \in 2^\omega$, we let $z_0 \oplus z_1 = z \in 2^\omega$, where $z(2n) = z_0(n)$ and $z(2n+1) = z_1(n)$, for $n \in \omega$. Recall that $\WO_\alpha = \{ r \in 2^\omega : (\omega, E_r) \cong (\alpha, \in) \}$, where $E_r = \{ (n,m): r(2^n3^m) = 1 \}$. Fix an effective enumeration $\langle s(n) : n \in \omega \rangle$ of $\omega^{<\omega}$ and an effective bijection $e \colon (2^{<\omega})^{<\omega} \to \omega$.

\begin{definition}
    A real $u \in \omega^\omega$ codes the continuous function $f \colon (2^\omega)^\omega \to \omega^{<\omega}$, if for every $\bar x \in (2^\omega)^\omega$, $$f(\bar x) = \bigcup_{n \in \omega} s\Bigl(u\bigl(e(\langle x_i \restriction n) : i < n \rangle)\bigr)\Bigr).$$ 
    
    A real $z \in 2^\omega$ is a code for the condition $(X, \alpha) \in \tilde{\mathbb{S}}^{*\omega_1}$, if $ z = z_0 \oplus z_1$, where $z_0 \in \WO_\alpha$ and, letting $\iota \colon \omega \to \alpha$ be the unique order-isomorphism between $(\omega, E_{z_0})$ and $(\alpha, \in)$, $$X = \{ \bar x \in (2^\omega)^\alpha : \forall n,m \in \omega (z_1(e(\langle x_{\iota(i)} \restriction m : i < n \rangle)) = 1 ) \}.$$

    Given a code $u$ for a continuous function $f$ as above, we write $f_{z,u}$ for the function $X \to \omega^\omega$, given by $$f_{z, u}(\bar x) = f(\langle x_{\iota(n)} : n \in \omega \rangle).$$

    We also write $(X_z, \alpha_z)$ for the unique condition coded by $z \in 2^\omega$. 
\end{definition}

Clearly every condition $(X, \alpha)$ has a code $z$, and whenever $f \colon X \to \omega^\omega$ is arbitrary continuous, there is $u$ such that $f = f_{z, u}$. The following is straightforward to check:

\begin{lemma}
    The set of codes for conditions is a $\Pi^1_1$ subset of $2^\omega$. Similarly, the set of codes for continuous function is a $\Pi^1_1$ subset of $\omega^\omega$. The set of pairs of codes for conditions $(z, z')$, where $(X_z, \alpha_z) \leq (X_{z'}, \alpha_{z'})$, is also a $\Pi^1_1$ subset of $(2^\omega)^2$.
\end{lemma}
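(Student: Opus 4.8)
The plan is to unravel each definition and check that the relevant condition is expressible with a single $\forall$-over-reals quantifier in front of an arithmetic (or Borel) matrix, which is exactly the shape of a $\Pi^1_1$ formula. First I would handle the set of codes for conditions. A real $z$ is such a code iff $z = z_0 \oplus z_1$ with $z_0 \in \WO$ (a genuine $\Pi^1_1$ condition, with the additional clause $(\omega, E_{z_0}) \cong (\alpha, \in)$ for some $\alpha$ being automatic once $z_0 \in \WO$) and, setting $\iota$ to be the order-isomorphism read off from $z_0$, the closed set $X$ defined by $z_1$ is non-empty and satisfies the perfect-tree requirement from the definition of $\tilde{\mathbb{S}}^{*\omega_1}$. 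Non-emptiness of $X$ is $\Sigma^1_1$ in $z$, but I would instead observe that the full perfect-splitting condition already entails non-emptiness (a splitting tree is infinite, hence has a branch), so it suffices to express: for every $\bar x \in (2^\omega)^\alpha$ and every $\alpha$-coordinate $\gamma \in C$ and every $t$ in the candidate tree $T_{X,\bar x \restriction \gamma}$, there exist two incompatible extensions of $t$ in that tree. Each of the internal statements ``$t \in T_{X, \bar x\restriction\gamma}$'' unfolds, via the definition of $T_{X,\bar x}$, into a statement of the form ``there exist $y$ and a tail sequence $\bar z$ with $t \subseteq y$ and $\bar x^\frown y^\frown \bar z \in X$'', and membership in $X$ is the arithmetic-in-$z_1$ condition $\forall n, m\ (z_1(e(\dots)) = 1)$. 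I would be slightly careful that the existential over the real $y$ and the real-sequence $\bar z$ does not spoil the $\Pi^1_1$ bound; the trick is that membership in $X$ only depends on finite initial segments, so one can phrase ``$T_{X,\bar x}$ is a perfect tree'' purely in terms of the bits of $z_1$ and of $\bar x$ without an honest existential real quantifier — it becomes: for every $t$, if all the finitely-branching constraints are consistent with $t$ lying on the tree then there are two incompatible $t_0, t_1 \supseteq t$ with the same property. After this bookkeeping the whole thing is a universal real quantifier over $\bar x$ (and the arithmetic quantifiers over $n, m, t, \gamma$) in front of an arithmetic matrix, hence $\Pi^1_1$.

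Next I would do the set of codes $u$ for continuous functions $f \colon (2^\omega)^\omega \to \omega^{<\omega}$. Here the only thing to verify is that the recipe $f(\bar x) = \bigcup_n s(u(e(\langle x_i \restriction n : i < n\rangle)))$ actually defines a function into $\omega^{<\omega}$, i.e. that for every $\bar x$ the sets $s(u(\dots))$ form an increasing (under $\subseteq$) chain whose union is finite, or more precisely that the relevant coherence condition holds. This is again of the form ``for all $\bar x \in (2^\omega)^\omega$, for all $n$, [an arithmetic condition on $u$ and on finitely many bits of $\bar x$]'', so it is $\Pi^1_1$ in $u$. I would just remark that this is ``straightforward to check'' in the same breath.

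Finally, for the set of pairs $(z, z')$ coding conditions with $(X_z, \alpha_z) \leq (X_{z'}, \alpha_{z'})$: first both $z$ and $z'$ must be codes for conditions, which is $\Pi^1_1$ by the first part; then I need $\alpha_{z'} \subseteq \alpha_z$ in the sense of the support sets, i.e. that the order-isomorphism $\iota'$ read off from $z'_0$ has range contained (as a subset of the relevant ordinal) in that of $\iota$ — but since here conditions live in $\tilde{\mathbb{S}}^{*\omega_1}$ with supports that are genuine countable ordinals rather than subsets, ``$C' \subseteq C$'' translates into $\alpha_{z'} \le \alpha_z$, which is $\Pi^1_1$ (it is ``$z'_0 \in \WO$ and $z'_0$ embeds into $z_0$'', a $\Sigma^1_1$ fact, but combined with $z_0, z'_0 \in \WO$ it is equivalently $\Pi^1_1$ since comparison of wellorders is $\Delta^1_1$ on $\WO \times \WO$); and then the containment $X_z \restriction \alpha_{z'} \subseteq X_{z'}$ must be checked, which reads ``for every $\bar x \in (2^\omega)^{\alpha_z}$, if $\bar x \in X_z$ then $\bar x \restriction \alpha_{z'} \in X_{z'}$'', again a universal real quantifier over an arithmetic (in $z, z'$) matrix. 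Intersecting all these $\Pi^1_1$ conditions keeps us in $\Pi^1_1$.

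The main obstacle, and the only place that requires genuine care rather than mechanical unfolding, is making sure the clause ``$T_{X,\bar x}$ is a perfect tree'' does not secretly require an existential real quantifier (from the ``$\exists y, \bar z$'' in the definition of $T_{X,\bar x}$) nested under the universal $\forall \bar x$, which would push the complexity up to $\Sigma^1_2$ or worse. The resolution is the standard one for closed sets presented by finite constraints: membership of a finite string $t$ in $T_{X,\bar x}$ is equivalent to the statement that the finite system of constraints imposed by $z_1$ is satisfiable along $t$, and satisfiability of such a finitely-presented system is itself arithmetic (by compactness / König's lemma one can replace ``there is an infinite branch'' by ``the constraint set has arbitrarily long consistent extensions'', but even more simply, here the constraints on $X$ are ``coordinatewise'' conditions on initial segments, so $t \in T_{X,\bar x}$ is outright arithmetic in $z_1, \bar x, t$). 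Once that observation is in place, everything collapses to universal-real-quantifier-over-arithmetic, and the lemma follows; I would present the argument for the first clause in a couple of sentences and then say the other two are entirely analogous.
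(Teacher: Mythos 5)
The paper offers no proof of this lemma (it is flagged as ``straightforward to check''), and your unfolding is exactly the intended verification: everything reduces to a universal real quantifier over an arithmetic matrix, and the only point needing care is the one you isolate, namely that ``$t \in T_{X,\bar x}$'' hides an $\exists y,\bar z$ which you correctly eliminate by compactness/K\"onig's lemma, since $X$ is a closed subset of a compact space presented by finite constraints read off from $z_1$. Your treatment of the ordering clause (comparison of wellorders being $\Delta^1_1$ on $\WO\times\WO$, plus a universally quantified arithmetic containment) is also right.

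One sentence is wrong, though easily repaired. You claim that nonemptiness of $X$ follows from the perfect-splitting requirement ``since a splitting tree is infinite, hence has a branch.'' It does not: the requirement quantifies over $\bar x \in X\restriction\gamma$ and over $t$ in the candidate tree, so when $X=\emptyset$ it is vacuously satisfied, and your formula would accept codes $z$ for which $X_z=\emptyset$, which is not a condition. The fix is the very observation you make two sentences later: $X\neq\emptyset$ is itself arithmetic in $z$, because by compactness it is equivalent to the finitely branching tree of finite approximations determined by $z_1$ having nodes at every level. Adding that arithmetic clause as a conjunct keeps the whole definition $\Pi^1_1$ and closes the gap.
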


\begin{lemma}\label{lem:pi12force}
 Let $\varphi(v_0, v_1)$ be $\Pi^1_2(r)$ for some $r \in \omega^\omega$. Then there is another $\Pi^1_2(r)$-formula $\psi(z,u,x)$ expressing that $z$, $u$ are codes and $i_{\omega_1}(X_z, \alpha_z) \Vdash_{\mathbb{S}^{*\omega_1}} \varphi(\check{x}, f_{z, u}( \dot{\bar s}^{\alpha_z}) )$.
\end{lemma}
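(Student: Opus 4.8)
The plan is to unwind the forcing statement $i_{\omega_1}(X_z,\alpha_z) \Vdash \varphi(\check x, f_{z,u}(\dot{\bar s}^{\alpha_z}))$ into an arithmetic-in-$\Pi^1_2$ assertion quantifying over codes. First I would observe that, since $\varphi(v_0,v_1) = \forall w\, \chi(v_0,v_1,w)$ for some $\Sigma^1_1(r)$-formula $\chi$, forcing $\varphi$ means: no extension of $(X_z,\alpha_z)$ forces $\neg\chi$ of a witness. Using Fact~\ref{fact:basic} together with Lemma~\ref{lem:realising} (applied to the $\mathbf\Sigma^1_1$ complement of $\chi$), a condition $(Y,D) \leq (X_z,\alpha_z)$ forces $\exists w\,\neg\chi(\check x, f_{z,u}(\dot{\bar s}^{\alpha_z}), w)$ if and only if there is a further extension $(Y',D')$ and a continuous $g\colon Y' \to \omega^\omega$ with $g''Y' \subseteq \{w : \neg\chi(x, f_{z,u}(\cdot), w)\}$ pointwise, i.e.\ $\neg\chi(x, f_{z,u}(\bar y'\restriction\alpha_z), g(\bar y'))$ for every $\bar y' \in Y'$. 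Crucially this last pointwise statement is $\mathbf\Pi^1_1$ in the relevant codes (a code $z'$ for $(Y',D')$, a code $u'$ for $g$, and $x$), hence absolute; and the existence of such $(Y',D'), g$ below a given condition is itself $\Pi^1_2$, being $\exists z'\,\exists u'$ (codes, with $z'$ coding an extension of $z$ — a $\Pi^1_1$ relation by the previous lemma) followed by that $\mathbf\Pi^1_1$ matrix.

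The next step is to replace the quantifier over \emph{actual} extensions by a quantifier over \emph{coded} extensions, which is where I expect the only real subtlety. The point is that $i_{\omega_1}$ is a dense embedding into $\mathbb{S}^{*\omega_1}$, so "$p$ forces $\varphi$" is equivalent to "for every $q \leq_{\tilde{\mathbb{S}}^{*\omega_1}} (X_z,\alpha_z)$, $q$ does not force $\neg\varphi$", and every such $q$ of the form $(Y,D)$ with $D \subseteq \omega_1$ countable has a code $z'$; conversely every code $z'$ with $(X_{z'},\alpha_{z'}) \leq (X_z,\alpha_z)$ gives an actual such condition. So "$\forall q \leq p$" becomes "$\forall z'$ (code for a condition $\leq$ the one coded by $z$)". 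One must be slightly careful that when we pass from $(X_z,\alpha_z)$ to an extension with support $D \not\subseteq \alpha_z$, the relevant function $f_{z,u}$ only depends on the coordinates in $\alpha_z = \dom$, so the composition still makes sense and the pointwise $\mathbf\Pi^1_1$ statement is unaffected; this is exactly the kind of bookkeeping handled by the coding set-up and by Lemma~\ref{lem:emb12}-style reasoning, but here we do not even need to move supports around, only to recognise extensions.

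Assembling: the formula $\psi(z,u,x)$ asserts that $z,u$ are codes (a $\Pi^1_1$ condition) and that
$$\forall z'\ \bigl[ z' \text{ codes a condition with } (X_{z'},\alpha_{z'}) \leq (X_z,\alpha_z) \bigr] \rightarrow \neg\bigl(\exists z''\, \exists u''\ \Theta(z,z',z'',u,u'',x)\bigr),$$
where $\Theta$ says $z''$ codes an extension of the condition coded by $z'$, $u''$ codes a continuous function, and for every branch $\bar y$ through $X_{z''}$ one has $\neg\chi(x, f_{z,u}(\bar y), f_{z'',u''}(\bar y))$ — a $\mathbf\Pi^1_1$ (in fact arithmetic in $x,z,z'',u,u''$ and $\Sigma^1_1$) matrix. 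The outermost shape is $\forall z'\,(\Pi^1_1 \rightarrow \neg\exists z''\,\exists u''\, (\Pi^1_1 \text{ matrix}))$, which is $\forall z'\, \forall z''\, \forall u''\, (\Sigma^1_1 \vee \Sigma^1_1)$, i.e.\ $\Pi^1_2$; adjoining the $\Pi^1_1$ clause that $z,u$ are codes keeps it $\Pi^1_2(r)$. Finally one checks the equivalence of $\psi(z,u,x)$ with the intended forcing statement: the forward direction uses that $i_{\omega_1}$ forces $\varphi$ iff no $\tilde{\mathbb{S}}^{*\omega_1}$-extension forces a counterexample, and the reverse direction uses Fact~\ref{fact:basic} and Lemma~\ref{lem:realising} to extract, from any extension forcing $\neg\varphi(\check x, f_{z,u}(\dot{\bar s}^{\alpha_z}))$, a coded witness $(z'',u'')$ realising $\Theta$. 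The main obstacle, as indicated, is purely the verification that every relevant forcing extension and every relevant name for a real can be captured by codes (and that "being an extension" and "being a code" are $\Pi^1_1$), which is precisely what the preceding lemma and Fact~\ref{fact:basic} deliver.
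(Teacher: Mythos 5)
Your proposal is correct and takes essentially the same route as the paper: write $\varphi = \forall w\,\chi$ with $\chi$ in $\Sigma^1_1(r)$, characterise failure of the forcing statement as the existence of a coded extension together with a coded continuous function realising the $\Pi^1_1$ counterexample pointwise (via Lemma~\ref{lem:realising}), observe that the pointwise statement is $\mathbf\Pi^1_1$ in the codes and hence absolute, and count quantifiers to put the negation in $\Sigma^1_2(z,u,x,r)$. Two small points in your write-up need repair, though neither is fatal. First, the coding of Section~\ref{sec:coding} only assigns codes to conditions $(X,\alpha)$ whose support is an \emph{ordinal}, so your assertion that every $(Y,D)$ with $D\subseteq\omega_1$ countable has a code is not literally true as the machinery stands. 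The paper avoids the entire support issue with a Shoenfield absoluteness observation (the footnote in its proof): since $\varphi$ is $\Pi^1_2$ and the name $f_{z,u}(\dot{\bar s}^{\alpha_z})$ depends only on the first $\alpha_z$ coordinates, one may argue inside $\mathbb{S}^{*\alpha_z}$ and consider only extensions of the form $(Y,\alpha_z)$. Your version can be salvaged by noting that ordinal-support conditions are dense below any condition, but the paper's reduction is cleaner and also makes your outer $\forall z'$ quantifier visibly redundant (an extension of an extension is an extension, so a single negated existential over counterexample-realising extensions of $(X_z,\alpha_z)$ suffices). Second, your biconditional ``$(Y,D)$ forces $\exists w\,\neg\chi$ iff a further extension realises the counterexample pointwise'' fails right-to-left (the further extension forces it; $(Y,D)$ itself need not), but your assembled formula only uses the correct equivalence ``$p\not\Vdash\varphi$ iff some extension of $p$ realises a counterexample pointwise,'' so no harm results.
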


\begin{proof}
    Let $\varphi(v_0, v_1) = \forall v_2 \psi(v_0,v_1,v_2)$, where $\psi$ is $\Sigma^1_1(r)$. According to Lemma~\ref{lem:realising}, a condition $i_{\omega_1}(X, \alpha) \in \mathbb{S}^{*\omega_1}$ does not force $\varphi(\check{x},  f(\dot{\bar s}^{\alpha} ))$ if and only if there is an extension $(Y, \alpha) \leq (X, \alpha)$ and a continuous function $g \colon Y \to \omega^\omega$ so that $\forall \bar s \in Y (\neg \psi(x, g(\bar s), f(\bar s)))$.\footnote{Note that by Shoenfield's absoluteness theorem, we may argue within $\mathbb{S}^{*\alpha}$ rather than $\mathbb{S}^{*\omega_1}$, which is why we do not need to consider $(Y, \beta) \leq (X, \alpha)$, where $\beta > \alpha$.} Using the previous lemma, for $(X, \alpha) = (X_z, \alpha_z)$ and $f = f_{z, u}$, this can easily be expressed by a $\Sigma^1_2(z,u,x,r)$ formula. 
\end{proof}

\section{Proof of the main result}\label{sec:main}

The proof of Theorem~\ref{thm:main} is quite easy when $\lambda = 1$ and we actually obtain $\MS(\OD, V)$.

\begin{thm}
    Let $s$ be Sacks generic over $V$. Then $V[s] \models \MS(\OD, V)$.
\end{thm}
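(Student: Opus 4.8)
The plan is to prove $\MS(\OD, V)$ in $V[s]$ by combining the homogeneity of Sacks forcing with the Mansfield-Solovay style perfect-set argument. Suppose $A \subseteq \omega^\omega$ is $\OD$ in $V[s]$, say defined by an ordinal-definable-over-$V[s]$ formula $\theta(v)$ (allowing ordinal parameters), and suppose $A \not\subseteq V[s] \cap V = V$, witnessed by some real $x_0 \in A \setminus V$. We want a perfect subset of $A$. The point is that $x_0 \in V[s] \setminus V$, so by Lemma~\ref{lem:singlesacks}, $x_0$ is itself a Sacks real over $V$ with $V[x_0] = V[s]$, and there is a continuous injection $f \colon 2^\omega \to 2^\omega$ in $V$ with $f(x_0) = s$. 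We will use this reparametrization to turn the single point $x_0 \in A$ into a perfect set inside $A$.

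First I would fix, in $V$, a name $\dot x$ and a condition $T_0 \in \Ss$ such that $T_0 \Vdash \dot x \notin V$ (using the $\Ss$-genericity of $x_0$ over $V$ via $f$; concretely, set things up so that $\dot x$ names $f^{-1}(\dot s)$ restricted below a suitable condition, or directly run the argument with $\dot s$ itself by symmetry of Lemma~\ref{lem:singlesacks}). By a fusion argument as in Lemma~\ref{lem:singlesacks}, I may assume there is $S \le T_0$ and a continuous injection $g \colon [S] \to 2^\omega$ in $V$ with $S \Vdash g(\dot s) = \dot x$, and that $g$ extends to a total continuous injection on $2^\omega$; note $[S]$ is perfect. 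The key step is then to observe that $S$ forces $\dot x \in A$: indeed, $S$ forces "$\dot x \notin V$", and by the homogeneity of Sacks forcing together with ordinal-definability of $A$ in the extension, the statement "$g(\dot s) \in A$" — which depends only on ordinal parameters, $\check g$, and the generic — is decided by the weakest condition, hence by all conditions; since $x_0 \in A$ realizes the situation below some condition, $S \Vdash g(\dot s) \in A$. (More carefully: for each $t \in [S]$ generic over $V$, $V[t] = V[s]$-type extension, $g(t)$ is the image of a Sacks real, and by homogeneity the truth value of $\theta(g(\dot s))$ is constant across conditions; picking $t$ realizing $x_0$ shows it is "true".) Consequently, for every $t \in [S]$ which is $\Ss$-generic over $V$, $V[t] \models g(t) \in A$, but more importantly the set $\{ g(t) : t \in [S]\}$ is a perfect set all of whose elements satisfy $\theta$ — but this last claim needs care because $\theta$ is not absolute.

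Here is where the actual Mansfield–Solovay machinery enters, and this is the main obstacle. The naive argument gives "$g(t) \in A$" only for $V$-generic $t$, which is a comeager-in-$[S]$ but not all of $[S]$, and $A$ being merely $\OD$ (not analytic) is not absolute, so I cannot take closures as in Lemma~\ref{lem:realising}. The fix is to work instead inside $V[s]$ and use that $\Ss$-genericity over $V$ is itself a reasonably definable notion there: given $x_0 \in A \setminus V$, the real $s$ is generic, and I would build a perfect tree of "mutual generics" by a Mansfield-Solovay tree argument carried out in $V[s]$ relative to the forcing $\Ss$ over $V$ — splitting on the condition $S$ and its interaction with the definition of $A$. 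Precisely: in $V[s]$ consider the tree of conditions $T \le S$ deciding longer and longer initial segments of membership-"facts" about $g(\dot s) \in A$ consistently; homogeneity guarantees the tree does not die, and since $A$ does contain the new real $x_0$, the "$\in$" side is non-trivial, yielding a perfect set $P \subseteq [S]$ with $g''P \subseteq A$ computed correctly in $V[s]$. I would phrase this cleanly as: the set $B = \{ x \in 2^\omega : x \text{ is } \Ss\text{-generic over } V \text{ and } g(x) \in A\}$ contains a perfect set, because (i) it contains the comeager-in-$[S']$ set of $V$-generics for any $S' \le S$, and genericity over $V$ (a fixed inner model of $V[s]$, hence with $V$-reals a set in $V[s]$) plus the homogeneity-forced decision makes "$g(x) \in A$" hold on a $V$-generic-closed set, and (ii) the Mansfield–Solovay theorem applied in $V[s]$ to the set $B$ — which contains $x_0 \notin V$ and hence is not $\subseteq V$ — but wait, $B$ need not be $\Sigma^1_2$; so instead I would directly exhibit the perfect set via the fusion/tree-of-conditions argument, which is elementary and self-contained. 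The final perfect subset of $A$ is $g''P$, completing the proof that either $A \subseteq V$ or $A$ has a perfect subset.
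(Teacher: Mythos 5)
There is a genuine gap, concentrated in the middle of your argument. The claim that ``by the homogeneity of Sacks forcing together with ordinal-definability of $A$, the statement $g(\dot s)\in A$ is decided by the weakest condition'' is false: weak homogeneity decides statements whose parameters are all check-names, but $g(\dot s)\in A$ mentions the generic, and automorphisms move $\dot s$. (Concretely, take $\theta(v)$ to be ``$v(0)=0$'' and $g$ the identity; this is ordinal-definable and certainly not decided by $\mathbbm{1}$.) What you actually need here is just the forcing theorem applied to the generic $x_0$ itself: since $x_0$ is $\Ss$-generic over $V$ with generic filter $\{T\in\Ss^V: x_0\in[T]\}$ and $V[x_0]=V[s]\models\theta(x_0)$, there is a single condition $T$ with $x_0\in[T]$ and $T\Vdash\theta(\dot s)$. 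No homogeneity enters.

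Your second difficulty --- that $\theta$ is not absolute and that genericity only gives you a comeager-in-$[S]$ set --- is a misdiagnosis, and the ``Mansfield--Solovay tree of conditions / mutual generics'' patch you sketch for it is both vague and unnecessary. The point you are missing is the full strength of Lemma~\ref{lem:singlesacks}: \emph{every} real $y\in[T]\cap(V[s]\setminus V)$ is itself Sacks-generic over $V$, has $T$ in its generic filter (as $T\in V$ and $y\in[T]$), and satisfies $V[y]=V[s]$. Hence the forcing theorem gives $V[y]\models\theta(y)$, and since $V[y]$ \emph{is} $V[s]$, this is literally the statement $y\in A$ as computed in $V[s]$ --- no absoluteness of $\theta$ is ever invoked, because the model never changes. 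It remains only to note that $[T]\setminus V$ contains a perfect set in $V[s]$, e.g.\ $\phi^{-1}[\{s\oplus z: z\in 2^\omega\}]$ for a homeomorphism $\phi\colon[T]\to 2^\omega$ coded in $V$, since each such point computes $s$. Your write-up contains all the raw ingredients (Lemma~\ref{lem:singlesacks}, the condition $T$, the perfect set of new reals) but replaces the one correct gluing step with a false homogeneity claim and an unexecuted tree construction.
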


\begin{proof}
    Let $A$ be $\OD(r)$, $r \in V[s]$. The only interesting case is when $r \in V$ and there is $x \in A \cap (V[s] \setminus V)$. By Lemma~\ref{lem:singlesacks}, $x$ is also a Sacks real over $V$ and $V[x] = V[s]$. Thus is a condition $T$, with $x \in [T]$ and $T \Vdash \varphi(\dot s)$, where $\varphi$ is the $\OD(r)$-formula defining $A$. By Lemma~\ref{lem:singlesacks}, for every $y \in [T] \cap (V[s] \setminus V)$, $V[s] = V[y]$ and $V[s] \models \varphi(y)$. We easily find a perfect subset of $[T] \cap (V[s] \setminus V)$ in $V[s]$. Namely, fixing a homeomorphism $\phi \colon [T] \to 2^\omega$ in $V$, we let $P = \phi^{-1}[\{ s \oplus z : z \in 2^{\omega}\}]$.
\end{proof}

Next, we make the following simple observation. 

\begin{lemma}\label{fact:idealized}
Let $\alpha < \omega_1$, $M \preceq H(\theta)$ countable for some large $\theta$ and $(X, \alpha) \in \tilde{\mathbb{S}}^{*\alpha} \cap M$. Then there is $(Y, \alpha) \leq (X, \alpha)$ so that every $\bar y \in Y$ is $\mathbb{S}^{*\alpha}$-generic over $M$.
\end{lemma}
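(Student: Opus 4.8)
The plan is the standard fusion argument for the countable support iteration of Sacks forcing. First, recall from the remark following Fact~\ref{fact:basic} that a sequence $\bar y \in (2^\omega)^\alpha$ is $\mathbb{S}^{*\alpha}$-generic over $M$ if and only if for every dense $D \subseteq \tilde{\mathbb{S}}^{*\alpha}$ with $D \in M$ there is $(Z,C) \in D$ with $\bar y \restriction C \in Z$. Since $M$ is countable, fix an enumeration $\langle D_n : n \in \omega \rangle$ of all such dense sets (real density and ``$M \models$ dense'' agree here, as density of $D \in M$ is absolute between $M$ and $H(\theta)$). It therefore suffices to produce $(Y,\alpha) \in \tilde{\mathbb{S}}^{*\alpha}$ with $(Y,\alpha) \le (X,\alpha)$ such that every $\bar y \in Y$ meets every $D_n$ in this sense.

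To build $Y$ I would run a fusion inside $M$. Fix in $M$ a bijection $\omega \to \alpha$ and, via it, a bookkeeping enumerating all pairs $(\gamma,m)$ with $\gamma < \alpha$ and $m \in \omega$ cofinally. Then recursively construct a decreasing sequence $(X,\alpha) = (X_0,\alpha) \ge (X_1,\alpha) \ge \cdots$ of conditions in $\tilde{\mathbb{S}}^{*\alpha} \cap M$ together with an increasing sequence of finite sets $F_k \subseteq \alpha$ with $\bigcup_k F_k = \alpha$, so that at step $k$: (i) $X_{k+1}$ refines $X_k$ while preserving the finitely many splitting levels already committed at the coordinates in $F_k$ and adding one new splitting level at the coordinate dictated by the bookkeeping (the usual splitting step of the fusion); and (ii) below each of the finitely many nodes of the resulting splitting structure of $X_{k+1}$ we then pass, one node at a time, to a further refinement lying in $D_k$, which is possible by density of $D_k$ and can be carried out inside $M$ since all the relevant parameters are in $M$. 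That (i) and (ii) can be combined at every step is exactly the Axiom~A / fusion structure of $\tilde{\mathbb{S}}^{*\alpha}$ that underlies the continuous reading of names in Fact~\ref{fact:basic}. Finally put $Y = \bigcap_k X_k$.

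It then remains to check three things. First, $(Y,\alpha)$ is a condition: $Y$ is a nonempty closed set as a decreasing intersection of nonempty compacta, and since every coordinate $\gamma < \alpha$ receives a new splitting level cofinally often, each tree $T_{Y,\bar x}$ has a splitting node above any of its nodes and so is perfect. Second, $Y \subseteq X$, hence $(Y,\alpha) \le (X,\alpha)$ by the definition of the order (both supports are $\alpha$). Third, every $\bar y \in Y$ is $\mathbb{S}^{*\alpha}$-generic over $M$: given $n$, at the step $k$ that handles $D_n$ the sequence $\bar y$ passes below one of the nodes beneath which $X_{k+1}$ (hence $Y$) was refined into $D_n$, which yields the required $(Z,C) \in D_n$ with $\bar y \restriction C \in Z$. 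The only genuine obstacle is the bookkeeping: one must interleave the splitting requirements and the dense sets $D_n$ so that the fusion limit $Y$ is actually a condition — perfect section trees at \emph{every} coordinate $< \alpha$, limit ordinals included — while still catching all of the $D_n$. This is the familiar but somewhat fiddly Sacks-iteration fusion; alternatively one may simply quote Zapletal's treatment of $\tilde{\mathbb{S}}^{*\alpha}$ as an idealized forcing with the continuous reading of names (see \cite[Section 5.1.1]{Zapletal2008}), from which the statement is immediate.
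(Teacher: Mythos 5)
Your argument is correct in outline, but it takes a genuinely different route from the paper. You reprove the lemma from scratch by the Baumgartner--Laver style fusion: enumerate the dense sets of $M$, interleave splitting requirements at all coordinates of $\alpha$ with refinements into each $D_k$ below the finitely many nodes of the current splitting structure, and take the intersection. This is essentially the properness proof for the countable support iteration with the density requirements folded in, and, as you acknowledge, all the real work sits in the bookkeeping at limit coordinates and in the amalgamation step. The paper instead gets the lemma in two lines from machinery it has already set up: take a master condition $(Y_0,\alpha)\leq(X,\alpha)$ over $M$ (properness of the iteration is quoted as a Fact, so the fusion is hidden there), observe that since $M$ is countable the set $B$ of $\mathbb{S}^{*\alpha}$-generics over $M$ is Borel, and apply Lemma~\ref{lem:realising} to shrink $(Y_0,\alpha)$ to $(Y,\alpha)$ with $Y\subseteq B$. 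So the paper's proof buys brevity and reuses Lemma~\ref{lem:realising} as the single ``purification'' device, while yours is self-contained but has to carry out the fusion explicitly; at bottom both rest on the same construction. One small point to make explicit in your version: the witnesses in $D_k$ chosen in step (ii) should be taken in $M$ (available by elementarity, since $M\models$ ``$D_k$ is dense'' and the condition being extended lies in $M$), as genericity over $M$ requires the filter generated by $\bar y$ to meet $D_k$ in an element of $D_k\cap M$; your remark that the construction is ``carried out inside $M$'' covers this, but it is the one place where elementarity, rather than mere density in $V$, is actually used.
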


\begin{proof}
    Let $(Y_0, \alpha) \leq (X,\alpha)$ be a master condition over $M$ and note that the set $B$ of $\mathbb{S}^{*\alpha}$-generics over $M$ is a Borel subset of $(2^\omega)^\alpha$. By Lemma~\ref{lem:realising}, there is $(Y,\alpha) \leq (Y_0, \alpha)$ so that $Y \subseteq B$.
\end{proof}

The crucial part of the proof of Theorem~\ref{thm:main} will be the existence of perfect sets of generics, given in Proposition~\ref{prop:psofgen} below. This is based on the following slightly unusual result which shows that generics can be approximated through initial segments.

\begin{prop}\label{prop:genericity}
    Let $\alpha < \omega_1$ be a limit ordinal, $\bar s$ be $\Ss^{*\alpha}$-generic over $V$ and $\bar t = \langle t_\beta : \beta < \alpha \rangle  \in V[\bar s]$, so that for each $\beta < \alpha$, $\bar t \restriction \beta$ is $\Ss^{*\beta}$-generic over $V$. Then $\bar t$ is also $\Ss^{*\alpha}$-generic over $V$.
\end{prop}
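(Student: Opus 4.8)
The plan is to exploit the continuous reading of names (Fact~\ref{fact:basic}) together with the characterisation of genericity in terms of dense subsets of $\tilde{\mathbb{S}}^{*\alpha}$: the sequence $\bar t$ is $\Ss^{*\alpha}$-generic over $V$ iff for every dense $D \in V$, there is $(X,C) \in D$ with $\bar t \restriction C \in X$. So fix a dense open $D \subseteq \tilde{\mathbb{S}}^{*\alpha}$ in $V$; by density of the range of $i_\alpha$ and the continuous-reading fact, we may work with conditions of the form $i_\alpha(X,C)$. Since $\alpha < \omega_1$ is a limit, any countable $C \subseteq \alpha$ is bounded below $\alpha$, say $C \subseteq \beta < \alpha$. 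The key idea: starting from a condition $p = i_\alpha(X, C)$ in the generic filter given by $\bar s$ (there is such a $p$ since $\bar s$ is generic), I want to find an extension of $p$ which lies in $D$ but whose support is still bounded below $\alpha$ and which is ``compatible with $\bar t$'' in the appropriate sense — more precisely, I want to reflect the problem to the bounded level $\beta$ and use that $\bar t \restriction \beta$ is $\Ss^{*\beta}$-generic over $V$.

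The technical heart is as follows. Let $G$ be the $\Ss^{*\alpha}$-generic filter on $\tilde{\mathbb{S}}^{*\alpha}$ determined by $\bar s$. Given dense $D \in V$, consider in $V$ the set $D^* $ of conditions $(X,C) \in D$; for each such condition $C$ is bounded below $\alpha$. Now the point is that $\bar t \restriction \beta$ being generic over $V$ for $\Ss^{*\beta}$ means it meets every dense subset of $\tilde{\mathbb{S}}^{*\beta}$ in $V$. I would like to argue: the set of $(X,C) \in \tilde{\mathbb{S}}^{*\beta}$ that are either in $D$ or incompatible with every element of $D$ with support $\subseteq \beta$ is dense in $\tilde{\mathbb{S}}^{*\beta}$ in $V$ — actually, since for conditions with support bounded by $\beta$ the orderings of $\tilde{\mathbb{S}}^{*\beta}$ and $\tilde{\mathbb{S}}^{*\alpha}$ agree, and $D$ is dense in $\tilde{\mathbb{S}}^{*\alpha}$, the restriction $D \cap \tilde{\mathbb{S}}^{*\beta}$ (conditions in $D$ with support $\subseteq \beta$) is itself dense in $\tilde{\mathbb{S}}^{*\beta}$. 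So it suffices to find, for each $\beta < \alpha$, that $\bar t \restriction \beta$ meets $D \cap \tilde{\mathbb{S}}^{*\beta}$; but that holds by hypothesis, as $\bar t \restriction \beta$ is $\Ss^{*\beta}$-generic over $V$. This gives $(X,C) \in D$ with $C \subseteq \beta$ and $\bar t \restriction C = (\bar t \restriction \beta) \restriction C \in X$, which is exactly what genericity of $\bar t$ for $\Ss^{*\alpha}$ requires for this $D$. Then $\bar t$ meets every dense set in $V$, so it is $\Ss^{*\alpha}$-generic.

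I expect the main obstacle — and the point requiring the most care — to be the claim that $D \cap \tilde{\mathbb{S}}^{*\beta}$ is dense in $\tilde{\mathbb{S}}^{*\beta}$: one needs that every condition in $\tilde{\mathbb{S}}^{*\beta}$, viewed inside $\tilde{\mathbb{S}}^{*\alpha}$ (which is legitimate since its support lies in $\beta \subseteq \alpha$), has an extension in $D$ whose support can be taken to remain below $\beta$ (equivalently below $\alpha$, since $\alpha$ is a limit any countable extension's support is bounded, but one must be able to keep it below the original $\beta$ — or just pass to a larger bounded ordinal $\beta' < \alpha$, which is equally fine). Here one should be slightly careful: an extension in $D$ of a condition with support $\subseteq \beta$ might have larger (but still bounded) support $\subseteq \beta' < \alpha$; that is harmless — we just use genericity of $\bar t \restriction \beta'$ instead of $\bar t \restriction \beta$. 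The clean way to phrase it: for each $\beta < \alpha$ let $D_\beta = \{(X,C) \in D : C \subseteq \beta\}$; then $\bigcup_{\beta < \alpha} D_\beta = D$, and for any fixed $(X,C) \in \tilde{\mathbb{S}}^{*\alpha}$ with $C \subseteq \beta_0$, density of $D$ gives an extension in some $D_\beta$ with $\beta \geq \beta_0$, so $D_\beta$ is ``dense below'' conditions with support $\subseteq \beta_0$ inside $\tilde{\mathbb{S}}^{*\beta}$. A diagonal/back-and-forth over the countably many $\beta$ is not even needed: for a single dense $D$, just pick the witness from the hypothesis applied at a suitable $\beta$. One should also double-check the harmless but necessary fact that the suborder of $\tilde{\mathbb{S}}^{*\alpha}$ on conditions with support $\subseteq \beta$ is order-isomorphic to $\tilde{\mathbb{S}}^{*\beta}$ via the obvious map (restricting/padding the closed set), which is immediate from the definitions and the preceding lemma that $(X \restriction C', C')$ is a condition. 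Modulo these bookkeeping points the argument is short.
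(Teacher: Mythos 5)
There is a fatal error at the very first step: you assert that ``since $\alpha < \omega_1$ is a limit, any countable $C \subseteq \alpha$ is bounded below $\alpha$.'' But $\alpha$ is itself a \emph{countable} limit ordinal, so $\cf(\alpha) = \omega$ and countable subsets of $\alpha$ need not be bounded -- indeed $C = \alpha$ is a legitimate support, and the set of conditions $(X,C)$ with $C$ cofinal in $\alpha$ is dense in $\tilde{\mathbb{S}}^{*\alpha}$. Hence your sets $D_\beta = \{(X,C) \in D : C \subseteq \beta\}$ can all be empty for a dense $D$, the union $\bigcup_{\beta<\alpha} D_\beta$ need not equal $D$, and $D \cap \tilde{\mathbb{S}}^{*\beta}$ need not be dense in $\tilde{\mathbb{S}}^{*\beta}$ (a condition with support in $\beta$ may only have extensions in $D$ whose supports are cofinal in $\alpha$). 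The entire reduction to bounded levels collapses.

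There is also a structural red flag you should have caught: your argument never actually uses the hypothesis that $\bar t$ lives in $V[\bar s]$ for $\bar s$ generic over the full $\alpha$-length iteration -- it only uses genericity of the initial segments $\bar t \restriction \beta$. The remark immediately following the proposition in the paper gives a counterexample to that stronger statement: in an $\mathbb{S}^{*\omega+1}$-extension one can build $\langle t_n : n \in \omega\rangle$ with every initial segment appropriately generic but with the whole sequence coding the degree at level $\omega+1$, hence not $\mathbb{S}^{*\omega}$-generic. Any correct proof must exploit where $\bar t$ lives. The paper's argument does so via the degree structure: it shows each $t_\beta$ is the image of $s_\beta$ under a continuous injection with an $\mathbb{S}^{*\beta}$-name $\dot f_\beta$ (a fusion argument), passes to a condition $(X,\alpha)$ all of whose elements are generic over a countable elementary submodel so that the induced map $f$ is injective level by level, verifies that $(f''X,\alpha)$ is again a condition, meets $D$ below it, and pulls the resulting condition back through $f$. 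None of this machinery is dispensable.
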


Let's note briefly that the conclusion easily fails when $\bar t$ is not a member of an $\alpha$-length iteration. For instance, in an $\mathbb{S}^{*\omega+1}$ extension one can easily come up with a sequence $\langle t_n : n \in \omega \rangle$, each initial segment being generic for the appropriate length iteration, but coding the degree at level $\omega+1$. 

\begin{proof}
It suffices to show that for each dense subset $D \subseteq \tilde{\mathbb{S}}^{*\alpha}$ in $V$, there is $(X,\alpha) \in D$ with $\bar t \in X$. So fix such $D \in V$ now.

    First note, from the degree structure of $V[\bar s]$ (see Fact~\ref{fact:degrees}), that for each $\beta < \alpha$, $[\bar t \restriction \beta]_V = [\bar s \restriction \beta]_V$ and $[t_\beta]_V = [s_\beta]_V$. For each $\beta$, let $\dot t_\beta$ be an $\mathbb{S}^{*\alpha}$-name for $t_\beta$ and let $\bar p$ be an arbitrary condition forcing this.

    \begin{claim}\label{claim:c1}
        There is $\bar q \leq \bar p$ and for each $\beta < \alpha$, an $\mathbb{S}^{*\beta}$-name $\dot f_\beta$ for a continuous injection $2^\omega \to 2^\omega$, so that $\bar q \Vdash \dot t_\beta = \dot f_\beta(\dot s_\beta)$.
    \end{claim}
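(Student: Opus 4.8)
The plan is to run a standard fusion argument over the iteration $\mathbb{S}^{*\alpha}$, exploiting the facts already recorded: for each $\beta < \alpha$ the condition $\bar p$ forces $[t_\beta]_V = [s_\beta]_V$, so in particular $t_\beta$ is (forced to be) a Sacks real over $V[\bar s \restriction \beta]$ and $V[\bar s \restriction \beta][t_\beta] = V[\bar s \restriction \beta][s_\beta]$. This is exactly the situation of Lemma~\ref{lem:singlesacks}, applied in the one-step extension $V[\bar s \restriction \beta] \to V[\bar s \restriction \beta][s_\beta]$: there is, in $V[\bar s \restriction \beta]$, a continuous injection $f_\beta \colon 2^\omega \to 2^\omega$ with $f_\beta(s_\beta) = t_\beta$. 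Thus for each $\beta$ we may pick an $\mathbb{S}^{*\beta}$-name $\dot f_\beta$ such that some condition extending $\bar p \restriction \beta$ forces $\dot f_\beta$ to be a continuous injection $2^\omega \to 2^\omega$ with $\dot f_\beta(\dot s_\beta) = \dot t_\beta$. The point of the claim is to amalgamate these choices, made stepwise, into a single condition $\bar q \leq \bar p$ forcing all of them simultaneously.

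First I would set things up in the topological picture: by Fact~\ref{fact:basic} it is enough to work below a condition of the form $i_\alpha(X,C)$, and by shrinking we may as well assume $C = \alpha$ (pad with trivial coordinates), so that a condition is just a closed set $X \subseteq (2^\omega)^\alpha$ with all the relevant trees $T_{X,\bar x}$ perfect. Then I would build a decreasing $\omega$-indexed (more precisely, fusion) sequence of conditions, handling countably many ``tasks'' in a bookkeeping: at the stage assigned to $\beta$, I extend the current condition $(X,\alpha)$ to decide the value of $\dot t_\beta$ continuously in $\dot{\bar s}^\alpha \restriction \beta$ and $\dot s_\beta$, which by Lemma~\ref{lem:singlesacks} (applied over the relevant intermediate model, or rather over $V$ using the name) can be arranged so that the continuous reading is literally of the form $\bar x \mapsto \dot f_\beta(\bar x \restriction \beta)(x_\beta)$ for a Borel-in-$\bar x \restriction \beta$ family of continuous injections; shrinking the $\beta$-th slice of $X$ if necessary, I make $\dot f_\beta$ genuinely continuous and injective on each fibre. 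The countable support and the properness/Axiom A fusion for $\mathbb{S}^{*\alpha}$ (standard, see the references cited after Definition~\ref{def:Sacksforcing}, or the treatment in \cite{Zapletal2008}) guarantee that the fusion limit $(Y,\alpha)$ is again a condition; taking $\bar q = i_\alpha(Y,\alpha)$ then forces $\dot t_\beta = \dot f_\beta(\dot s_\beta)$ for every $\beta < \alpha$ at once.

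The main obstacle I anticipate is purely organisational rather than conceptual: making sure the fusion respects countable support, i.e. that in handling task $\beta$ one only shrinks coordinates below some fixed finite (or, across the whole construction, countable) set so that the limit condition has countable support $\subseteq \alpha$ — here this is automatic since $\alpha < \omega_1$ is countable, so any conditions considered have support $\subseteq \alpha$ and we just need the fusion to converge, which the usual $n$-th coordinate, $n$-th splitting-level diagonalisation delivers. A secondary point to be careful about is that the $\dot f_\beta$ produced by Lemma~\ref{lem:singlesacks} is only guaranteed to exist below \emph{some} condition forcing $\dot t_\beta$ new; but $\bar p$ already forces $[t_\beta]_V = [s_\beta]_V$, hence $t_\beta \in V[\bar s \restriction \beta][s_\beta] \setminus V[\bar s \restriction \beta]$, so the lemma applies below $\bar p$ with no further extension needed in the ``new'' direction, and the only extending we do is the fusion shrinking. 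Assembling these, the claim follows.
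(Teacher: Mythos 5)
Your reduction of the claim to Lemma~\ref{lem:singlesacks} applied coordinate-by-coordinate (in $V[\bar s\restriction\beta]$, the real $t_\beta$ is new in the one-step Sacks extension by $s_\beta$, hence of the form $f_\beta(s_\beta)$ for a continuous injection $f_\beta\in V[\bar s\restriction\beta]$) is exactly the paper's key step, so the proposal is essentially correct; the difference lies entirely in how the countably many names $\dot f_\beta$ are amalgamated into one condition. The paper does \emph{not} run a splitting-level fusion. Instead it observes that pinning down $\dot f_\beta$ as an $\mathbb{S}^{*\beta}$-name requires extending the condition only at coordinates $\geq\beta$: the witness $f_\beta$ lives in $V[\bar s\restriction\beta]$ and is decided by a condition of the tail forcing over that model, so nothing below $\beta$ is ever shrunk. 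It then proves, by induction on $\gamma\leq\alpha$, the strengthened statement that for any $\delta<\gamma$ one can handle all $\beta\in[\delta,\gamma)$ while keeping $\bar q\restriction\delta=\bar p\restriction\delta$; at limits one glues along a cofinal $\omega$-sequence $\langle\delta_n\rangle$, and since each coordinate is modified at only finitely many stages the diagonal union is automatically a condition. Your route, by contrast, insists on a full continuous reading of $\dot t_\beta$ (which forces you to shrink coordinates $<\beta$ as well, hence to touch every coordinate infinitely often) and therefore genuinely needs the Baumgartner--Laver splitting-preserving fusion, on top of which you must arrange fibre-wise injectivity compatibly with the splitting constraints. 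That can be made to work, but it is considerably heavier than what the claim requires --- $\dot f_\beta$ need only be \emph{some} $\mathbb{S}^{*\beta}$-name, not a continuously read one --- and the convergence issue you flag as ``organisational'' is precisely the technical debt the paper's ``only modify coordinates $\geq\beta$'' observation eliminates. If you intend to keep the fusion route, the step that needs real care is carrying out the single-coordinate extension below each of the finitely many $(F_k,n_k)$-determined pieces of the current condition and re-amalgamating, rather than extending the condition outright.
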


    Note in particular, that we may assume that $\dot t_\beta$ is an $\mathbb{S}^{*\beta +1}$-name, rather than just an $\mathbb{S}^{*\alpha}$-name.

    \begin{proof}
        We prove the following statement by induction on $\gamma \leq \alpha$:  
        
        For each $\delta < \gamma$, there is $\bar q \leq \bar p$ and for each $\beta \in [\delta, \gamma)$, an $\mathbb{S}^{*\beta}$-name $\dot f_\beta$, so that \begin{enumerate}
            \item  $\bar q \restriction \delta = \bar p \restriction \delta$, 
            \item for each $\beta  \in [\delta, \gamma)$, $\bar q \Vdash \dot t_\beta = f_\beta(\dot s_\beta)$.
        \end{enumerate}
        
        Suppose this is true for $\gamma < \alpha$. We show that this also holds at $\gamma+1$. Let $\delta < \gamma +1$ be arbitrary. First let $\bar q_0 \leq \bar p$ with $\bar q_0 \restriction \delta = \bar p \restriction \delta$ and $\bar q_0 \Vdash \dot t_\beta = \dot f_\beta(\dot s_\beta)$, $\beta \in [\delta, \gamma)$, using the inductive claim at $\gamma$. By Lemma~\ref{lem:singlesacks}, in the generic extension $V[\bar s]$, there will be a function $f_{\gamma} \in V[\bar s \restriction \gamma]$ such that $t_{\gamma} = f_{\gamma}(s_{\gamma})$. So in $V[\bar s \restriction \gamma]$, there is a condition in the tail of the iteration deciding $f_\gamma$. Back in $V$, we thus find a condition $\bar q \leq \bar q_0$ with $\bar q \restriction \gamma = \bar q_0 \restriction \gamma$ and an $\mathbb{S}^{*\gamma}$-name $\dot f_\gamma$ for the function that is being decided in the tail. Then $\bar q$ and $\dot f_\beta$, $\beta \in [\delta, \gamma +1 )$, are as required.

        Now suppose that this is true for all $\gamma' < \gamma$ and $\gamma \leq \alpha$ is a limit. Let $\langle \delta_n : n \in \omega \rangle$ be strictly increasing and cofinal in $\gamma$, with $\delta_0 = 0$. Using the inductive claim we can easily construct a decreasing sequence $\langle \bar q_n : n \in \omega \rangle$ below $\bar p$, so that for each $n$, \begin{enumerate}
            \item $\bar q_{n+1} \restriction \delta_{n+1} = \bar q_n \restriction \delta_{n+1}$, 
            \item for each $\beta < \delta_{n+1}$, there is an $\mathbb{S}^{*\beta}$-name $\dot f_\beta$, so that $\bar q_n \Vdash \dot t_\beta = \dot f_\beta(\dot s_\beta)$.
        \end{enumerate}
Then $\bar q = \bigcup_{n \in \omega} \bar q_n \restriction [\delta_n, \delta_{n+1})$ clearly works.
    \end{proof}

Fix now $\bar q \leq \bar p$ and $\dot f_\beta$, $\beta < \alpha$, as in the claim. Further, let $(X_0,\alpha) \in \tilde{\mathbb{S}}^{*\alpha}$ and $f \colon X_0 \to (2^\omega)^\alpha$ continuous, such that $i_\alpha(X_0,\alpha)$ extends $\bar q$ and forces that $f(\bar s) = \bar t$. Then $i_\alpha(X_0,\alpha) \Vdash f(\dot{\bar s}^{\alpha})(\check{\beta}) = \dot f_\beta (\dot s_\beta)$, for each $\beta < \alpha$.

Next, let $M$ be a countable elementary submodel of some large $H(\theta)$ containing all relevant parameters. Let $(X, \alpha) \leq (X_0, \alpha)$ so that every $\bar x \in X$ is $\mathbb{S}^{*\alpha}$-generic over $M$ (see Lemma~\ref{fact:idealized}). Note that for each $\bar x \in X$ and $\beta < \alpha$, $f(\bar x)\restriction \beta$ only depends on $\bar x \restriction \beta$, as each value $f(\bar x)(\delta)$, $\delta < \beta$, is the evaluation of the $\mathbb{S}^{*\delta +1}$-name $\dot t_\delta \in M$ according to the $\mathbb{S}^{\delta+1}$ generic over $M$ given by $\bar x \restriction (\delta +1)$ (see the sentence after Claim~\ref{claim:c1}). Let $g_\beta \colon X \restriction \beta \to (2^{\omega})^\beta$ be the function where $g_\beta(\bar x \restriction \beta) = f(\bar x) \restriction \beta$.

\begin{claim}
    Each $g_\beta$ is injective. 
\end{claim}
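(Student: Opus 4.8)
The plan is to prove injectivity of $g_\beta$ by induction on $\beta < \alpha$. Recall that $g_\beta(\bar x \restriction \beta) = f(\bar x) \restriction \beta = \langle \dot f_\delta(x_\delta)^{\text{eval}} : \delta < \beta\rangle$, where the $\delta$-th coordinate is computed from $\bar x \restriction \delta$ (which decides the $\mathbb{S}^{*\delta}$-name $\dot f_\delta$, an injection $2^\omega \to 2^\omega$) applied to $x_\delta$. So $g_{\beta+1}(\bar x \restriction (\beta+1))$ is determined by the pair $\bigl(g_\beta(\bar x\restriction\beta),\, (\dot f_\beta)^{g_\beta(\bar x\restriction\beta)}(x_\beta)\bigr)$. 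I would phrase the induction hypothesis as: $g_\beta$ is injective. For the successor step, suppose $\bar x \restriction (\beta+1)$ and $\bar x' \restriction (\beta+1)$ (with $\bar x, \bar x' \in X$) have $g_{\beta+1}(\bar x\restriction(\beta+1)) = g_{\beta+1}(\bar x'\restriction(\beta+1))$. Restricting to the first $\beta$ coordinates gives $g_\beta(\bar x\restriction\beta) = g_\beta(\bar x'\restriction\beta)$, so by the inductive hypothesis $\bar x\restriction\beta = \bar x'\restriction\beta$; call this common value $\bar y$. Then both sequences decide $\dot f_\beta$ to the same continuous injection $h = (\dot f_\beta)^{\bar y}$ (using that $\bar y$ is $\mathbb{S}^{*\beta}$-generic over $M$ and $\dot f_\beta \in M$), and the $\beta$-th coordinate of $g_{\beta+1}$ equals $h(x_\beta) = h(x'_\beta)$. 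Since $h$ is injective, $x_\beta = x'_\beta$, hence $\bar x\restriction(\beta+1) = \bar x'\restriction(\beta+1)$.

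For limit $\beta$, injectivity is immediate: if $g_\beta(\bar x\restriction\beta) = g_\beta(\bar x'\restriction\beta)$ then for every $\gamma < \beta$ the restrictions to the first $\gamma$ coordinates agree, so $g_\gamma(\bar x\restriction\gamma) = g_\gamma(\bar x'\restriction\gamma)$, whence $\bar x\restriction\gamma = \bar x'\restriction\gamma$ by the inductive hypothesis; taking the union over $\gamma < \beta$ gives $\bar x\restriction\beta = \bar x'\restriction\beta$. The base case $\beta = 0$ (or $\beta=1$) is trivial. This covers all cases and yields the claim for every $\beta < \alpha$.

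The one point that needs a little care — and is the main thing to get right rather than a genuine obstacle — is the justification that, for $\bar x \in X$, the value $f(\bar x)(\delta)$ depends only on $\bar x \restriction (\delta+1)$ and is literally $h_\delta(x_\delta)$ where $h_\delta$ is the continuous injection obtained by evaluating the $\mathbb{S}^{*\delta}$-name $\dot f_\delta$ along the $\mathbb{S}^{*\delta}$-generic $\bar x \restriction \delta$ over $M$. This is exactly the content of the sentence following Claim~\ref{claim:c1} together with the choice of $X$ via Lemma~\ref{fact:idealized}: every $\bar x \in X$ is $\mathbb{S}^{*\alpha}$-generic over $M$, hence each initial segment $\bar x \restriction (\delta+1)$ is $\mathbb{S}^{*\delta+1}$-generic over $M$, and since $\dot t_\delta$ was arranged to be an $\mathbb{S}^{*\delta+1}$-name (forced equal to $\dot f_\delta(\dot s_\delta)$ with $\dot f_\delta$ an $\mathbb{S}^{*\delta}$-name), the relevant evaluations make sense in $M[\bar x\restriction(\delta+1)]$ and agree with $f(\bar x)(\delta)$ by elementarity and the forcing theorem. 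Once this bookkeeping is in place the induction is routine; no fusion or topological argument is needed here, since injectivity of each $\dot f_\delta$ is already guaranteed by Claim~\ref{claim:c1} and is preserved under evaluation because "being an injection" is a $\mathbf{\Pi}^1_1$ (indeed Borel, for continuous functions coded appropriately) and hence absolute property.
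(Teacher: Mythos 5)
Your proof is correct and is essentially the paper's argument: the paper phrases it as a least-counterexample (which must be a successor $\delta+1$, reducing to injectivity of the evaluated $f_\delta$ on the last coordinate), while you run the same reduction as a transfinite induction with a trivial limit step. The supporting bookkeeping you supply—that $f(\bar x)(\delta)$ is the evaluation of $\dot f_\delta$ in $M[\bar x\restriction\delta]$ applied to $x_\delta$, and that injectivity of $f_\delta$ transfers by absoluteness—matches what the paper establishes in the sentence preceding the claim.
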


\begin{proof}
    Suppose not and let $\beta$ be least so that $g_\beta$ is not injective. Clearly, $\beta$ must be a successor ordinal, say $\beta = \delta +1$. Let $\bar x, \bar x' \in X \restriction \beta$ with $g_\beta(\bar x) = g_\beta(\bar x')$. As $\beta$ is least, $\bar x \restriction \delta = \bar x' \restriction \delta$. Then let $f_\delta \in M[\bar x \restriction \delta] = M[\bar x' \restriction \delta]$ be the interpretation of $\dot f_\delta \in M$ according to the $\mathbb{S}^{*\delta}$-generic given by $\bar x \restriction \delta$. But then $g_\beta(\bar x)(\delta) = f_\delta(x_\delta) = f_\beta(x'_\delta) = g_\beta(\bar x')(\delta)$, contradicting the injectivity of $f_\delta$. 
\end{proof}

\begin{claim}
    $(f''X, \alpha)$ is a condition.\footnote{For a similar claim, see \cite[Lemma 7]{Kanovei1999}}
\end{claim}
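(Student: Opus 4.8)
The plan is first to dispose of the easy part: $X$ is compact, being a closed subset of the compact space $(2^\omega)^\alpha$, and $f$ is continuous, so $f''X$ is compact and hence a non-empty closed subset of $(2^\omega)^\alpha$. It then remains only to verify the perfect-tree condition, i.e.\ that for every $\gamma < \alpha$ and every $\bar y \in (f''X)\restriction\gamma$, the tree $T_{f''X,\bar y}$ is perfect.

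So I would fix such $\gamma$ and $\bar y$. Since $g_\gamma$ is injective with range $(f''X)\restriction\gamma$, there is a unique $\bar x^* \in X\restriction\gamma$ with $g_\gamma(\bar x^*) = \bar y$, and for $\bar x\in X$ one has $f(\bar x)\restriction\gamma = \bar y$ if and only if $\bar x\restriction\gamma = \bar x^*$. Note that $T_{X,\bar x^*}$ is a perfect tree, since $(X,\alpha)$ is a condition and $\gamma\in\alpha$. A routine compactness argument shows that every $x \in [T_{X,\bar x^*}]$ equals $\bar x(\gamma)$ for some $\bar x\in X$ with $\bar x\restriction\gamma = \bar x^*$, and that $f(\bar x)(\gamma)$ does not depend on the choice of such $\bar x$ (because $f(\bar x)\restriction(\gamma+1)$ depends only on $\bar x\restriction(\gamma+1)$). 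This gives a well-defined, and routinely continuous, map $h \colon [T_{X,\bar x^*}] \to 2^\omega$ with $h(x) = f(\bar x)(\gamma)$. Next I would check that $h$ is injective: if $h(x)=h(x')$, picking $\bar x,\bar x'\in X$ with $\bar x\restriction\gamma=\bar x'\restriction\gamma=\bar x^*$, $\bar x(\gamma)=x$ and $\bar x'(\gamma)=x'$, the sequences $f(\bar x)\restriction(\gamma+1)$ and $f(\bar x')\restriction(\gamma+1)$ agree (on the first $\gamma$ coordinates both equal $\bar y$, and on coordinate $\gamma$ both equal $h(x)$), so $g_{\gamma+1}(\bar x\restriction(\gamma+1)) = g_{\gamma+1}(\bar x'\restriction(\gamma+1))$, and injectivity of $g_{\gamma+1}$ forces $x = x'$.

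Then I would unravel the definition of $T_{f''X,\bar y}$: one has $s \in T_{f''X,\bar y}$ if and only if some $\bar z\in f''X$ satisfies $\bar z\restriction\gamma = \bar y$ and $s\subseteq\bar z(\gamma)$, which by the above holds if and only if $s\subseteq h(x)$ for some $x\in[T_{X,\bar x^*}]$. Hence $T_{f''X,\bar y} = \{ s\in 2^{<\omega} : \exists x\in[T_{X,\bar x^*}]\ (s\subseteq h(x))\}$, and a further compactness argument identifies $[T_{f''X,\bar y}]$ with $h''[T_{X,\bar x^*}]$, which is closed and is precisely the set of branches of $T_{f''X,\bar y}$. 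Since $h$ is a continuous injection on a compact space it is a homeomorphism onto its image, and $[T_{X,\bar x^*}]$ is a perfect set, so $h''[T_{X,\bar x^*}] = [T_{f''X,\bar y}]$ is perfect; therefore $T_{f''X,\bar y}$ is a perfect tree, which completes the proof that $(f''X,\alpha)$ is a condition.

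I expect the main obstacle to be purely bookkeeping — the compactness ``realization'' step, the well-definedness and continuity of the slice map $h$, and the identification of $T_{f''X,\bar y}$ with the tree of $h''[T_{X,\bar x^*}]$ — rather than anything conceptually deep. The single step that uses more than soft topology is the injectivity of $h$, which rests on the injectivity of both $g_\gamma$ and $g_{\gamma+1}$, and hence ultimately on the genericity of the members of $X$ over $M$.
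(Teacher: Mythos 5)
Your proof is correct and follows essentially the same route as the paper: identify $[T_{f''X,\bar y}]$ with the image of $[T_{X,\bar x^*}]$ under a continuous injective slice map and conclude that it is perfect. The only (harmless) difference is that you derive injectivity of the slice map $h$ from the already-established injectivity of $g_{\gamma+1}$, whereas the paper observes directly that $h$ is the interpretation $f_\gamma$ of the name $\dot f_\gamma$, which is by construction a continuous injection.
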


\begin{proof} 
    Clearly $f''X$ is closed. If $\beta < \alpha$ and $\bar y \in f''X \restriction \beta$, then $\bar y = g_\beta(\bar x)$ for a unique $\bar x \in X \restriction \beta$. Similarly to before, let $f_\beta$ be the interpretation of $\dot f_\beta$ according to $\bar x$. We obtain that $[T_{f'' X, \bar y}] = {f_\beta}''[T_{X, \bar x}]$. As $f_\beta$ is a continuous injection and $[T_{X, \bar x}]$ is a perfect set, so is $[T_{f'' X, \bar y}]$. 
\end{proof}

Now find $(Y, \alpha) \leq (f''X,\alpha)$ in $D$. 

\begin{claim}
    $(f^{-1}(Y), \alpha) \leq (X, \alpha)$ is a condition. 
\end{claim}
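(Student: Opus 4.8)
The plan is to show that $f^{-1}(Y)$ is closed and that, for every $\beta < \alpha$ and every $\bar x \in f^{-1}(Y) \restriction \beta$, the tree $T_{f^{-1}(Y), \bar x}$ is perfect. Closedness is immediate: $Y$ is closed, $f \colon X_0 \to (2^\omega)^\alpha$ is continuous, $X$ is closed, and $f^{-1}(Y) = X \cap f^{-1}[Y]$ (recall $Y \leq (f''X,\alpha)$, so $Y \subseteq f''X \subseteq f''X_0$ already, and every point of $Y$ has a unique $f$-preimage in $X$ by the injectivity of the $g_\beta$'s). For the perfectness condition, fix $\beta < \alpha$ and $\bar x \in f^{-1}(Y) \restriction \beta$. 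Since $g_\beta$ is injective, $\bar x$ is the unique element of $X \restriction \beta$ with $g_\beta(\bar x) = \bar y$, where $\bar y := f(\text{any lift of }\bar x) \restriction \beta \in Y \restriction \beta$. As in the proof that $(f''X,\alpha)$ is a condition, let $f_\beta$ be the interpretation of $\dot f_\beta$ according to the $\mathbb S^{*\beta}$-generic over $M$ coded by $\bar x$; then on the fibre over $\bar x$ the map $f$ acts in coordinate $\beta$ as the continuous injection $f_\beta$, so $[T_{f''X, \bar y}] = f_\beta{}''[T_{X,\bar x}]$ and hence $[T_{f^{-1}(Y),\bar x}] = f_\beta^{-1}[\,[T_{Y,\bar y}]\,]$.

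Now $[T_{Y,\bar y}] \subseteq [T_{f''X,\bar y}] = f_\beta{}''[T_{X,\bar x}]$ is a non-empty closed subset of the perfect set $f_\beta{}''[T_{X,\bar x}]$, and since $(Y,\alpha)$ is a condition it is in fact a perfect set (the branches of a perfect tree). Pulling back along the continuous injection $f_\beta$ preserves being closed and non-empty; and since $f_\beta$ is injective it also transports "no isolated points" backwards — more concretely, $f_\beta$ restricted to $[T_{X,\bar x}]$ is a homeomorphism onto its (compact) image, so $f_\beta^{-1}$ is continuous there and $[T_{f^{-1}(Y),\bar x}] = f_\beta^{-1}[\,[T_{Y,\bar y}]\,]$ is homeomorphic to the perfect set $[T_{Y,\bar y}]$, hence perfect. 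Being a non-empty perfect (compact) subset of $2^\omega$, it is the set of branches of a perfect subtree, which is exactly what it means for $T_{f^{-1}(Y),\bar x}$ to be a perfect tree. Finally, $f^{-1}(Y) \restriction C \subseteq X$ where here $C = \alpha$, so $(f^{-1}(Y),\alpha) \leq (X,\alpha)$ as claimed.

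The only mild subtlety — and the step I would be most careful about — is the bookkeeping that makes "$f$ acts in coordinate $\beta$ as the single continuous injection $f_\beta$ on the fibre over $\bar x$" precise: this is exactly the point already established before Claim 4, namely that $f(\bar x')(\delta)$ depends only on $\bar x' \restriction (\delta+1)$ via the interpretation of the name $\dot f_\delta \in M$ along the generic coded by $\bar x' \restriction \delta$, together with $\dot t_\delta = \dot f_\delta(\dot s_\delta)$. Granting that, the argument is a direct transcription of the proof that $(f''X,\alpha)$ is a condition, with $f_\beta{}''(\cdot)$ replaced by $f_\beta^{-1}[\cdot]$ and "continuous injection preserves perfect sets" replaced by "preimage under a continuous injection (equivalently, image under its continuous inverse on a compact set) preserves perfect sets". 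No new ideas are needed beyond the ones already deployed for the companion claim.
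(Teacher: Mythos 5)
Your proof is correct and follows exactly the route the paper takes: the paper's own proof of this claim is a one-line remark that "the argument is exactly as before," reducing to the identity $[T_{f^{-1}(Y),\bar x}] = f_\beta^{-1}[T_{Y,\bar y}]$, and your write-up simply fills in the details (closedness, uniqueness of preimages via injectivity of the $g_\beta$'s, and that pulling back along the continuous injection $f_\beta$ on a compact set preserves perfectness). No discrepancy with the paper's argument.
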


\begin{proof}
    The argument is exactly as before. Let $\bar x \in f^{-1}(Y) \restriction \beta$ and $g_\beta(\bar x) = \bar y \in Y \restriction \beta$. If $f_\beta$ is the interpretation of $\dot f_\beta$ according to $\bar x$, then, as before, $[T_{f^{-1}(Y), \bar x}] = {f_\beta}^{-1}[T_{Y, \bar y}]$.
\end{proof}

Finally, we have that $i_\alpha(f^{-1}(Y), \alpha)$ extends $\bar p$ and forces that $\bar t \in Y$, where $(Y,\alpha) \in D$. As $\bar p$ was arbitrary, this finishes the proof.
\end{proof}

\begin{prop}\label{prop:psofgen}
    Let $\alpha < \omega_1$, $\bar s$ be $\Ss^{*\alpha}$-generic over $V$ and $(X, \alpha) \in \tilde{\mathbb{S}}^{*\alpha}$. Then in $V[\bar s]$ there is a perfect set of $\Ss^{*\alpha}$ generics $\bar t \in X$. 
\end{prop}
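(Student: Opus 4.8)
\emph{Plan.} The goal is to produce, inside $V[\bar s]$, a continuous injection $z \mapsto \bar t^z$ from $2^\omega$ into $X$ such that every $\bar t^z$ is $\Ss^{*\alpha}$-generic over $V$. Since $2^\omega$ is compact, the image $\{\bar t^z : z \in 2^\omega\}$ is then automatically a perfect subset of $X$, and it consists of generics, which is exactly what is required. The sequences $\bar t^z$ are built one coordinate at a time by a recursion of length $\alpha$ carried out in $V[\bar s]$, using the Sacks reals $s_0,s_1,\dots$ as raw material; the base case $\alpha=1$ already illustrates the idea via the map $z\mapsto h_{\langle\rangle}(s_0\oplus z)$.

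\emph{The recursion.} Suppose $\bar t^z \restriction \gamma$ has been defined (continuously in $z$), lies in $X \restriction \gamma$, and is $\Ss^{*\gamma}$-generic over $V$ for every $z$. A first observation, which follows from the degree structure of $V[\bar s]$ (Fact~\ref{fact:degrees}) exactly as in the proof of Proposition~\ref{prop:genericity}, is that any $\Ss^{*\gamma}$-generic over $V$ lying in $V[\bar s]$ is $V$-equivalent to $\bar s \restriction \gamma$; hence $V[\bar t^z \restriction \gamma] = V[\bar s \restriction \gamma]$, and in particular $s_\gamma$ is a Sacks real over $V[\bar t^z \restriction \gamma]$. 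At a successor stage $\gamma+1$ I would fix a homeomorphism $h_{\bar x}\colon 2^\omega \to [T_{X,\bar x}]$ depending continuously on $\bar x$ (chosen in $V$), pick a real $w_\gamma(z) \in V[\bar s \restriction(\gamma+1)]\setminus V[\bar s\restriction\gamma]$ depending continuously on $z$, and set $\bar t^z(\gamma) = h_{\bar t^z \restriction \gamma}(w_\gamma(z))$. Two applications of Lemma~\ref{lem:singlesacks} (first that $w_\gamma(z)$, lying in $V[\bar t^z\restriction\gamma][s_\gamma]\setminus V[\bar t^z\restriction\gamma]$, is Sacks over $V[\bar t^z\restriction\gamma]$; then that $h_{\bar t^z\restriction\gamma}(w_\gamma(z))$ is) together with the factorization $\Ss^{*(\gamma+1)} = \Ss^{*\gamma} * \dot{\Ss}$ give that $\bar t^z \restriction(\gamma+1)$ is $\Ss^{*(\gamma+1)}$-generic over $V$, and it lies in $X\restriction(\gamma+1)$ because $\bar t^z(\gamma) \in [T_{X,\bar t^z\restriction\gamma}]$. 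At a limit stage $\gamma$ I would take $\bar t^z\restriction\gamma = \bigcup_{\delta<\gamma}\bar t^z\restriction\delta$; provided this sequence lies in $V[\bar s\restriction\gamma]$, Proposition~\ref{prop:genericity} applies and yields that it is $\Ss^{*\gamma}$-generic over $V$, and it lies in $X\restriction\gamma$ since $X$ is closed.

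\emph{The main obstacle: choosing the $w_\gamma$.} The real work lies in the choice of the reals $w_\gamma(z)$. They must encode enough of $z$ that $z\mapsto\bar t^z$ comes out injective, but at the same time they must use only a \emph{finite} part of $z$ at every stage below $\alpha$ — otherwise $\bar t^z\restriction\gamma$ need not lie in $V[\bar s\restriction\gamma]$ and the limit step collapses. This tension is genuine: $w_\gamma(z)$ must live in $V[\bar s\restriction(\gamma+1)]$, and no perfect set of reals of $V[\bar s]$ is contained in $V[\bar s\restriction(\gamma+1)]$, so the encoding of $z$ has to be spread out along the coordinates. When $\alpha=\beta+1$ is a successor this is painless: take $w_\delta(z)=s_\delta$ for $\delta<\beta$ (no information from $z$) and $w_\beta(z)=s_\beta\oplus z$, absorbing all of $z$ at the last coordinate, where $V[\bar s\restriction(\beta+1)]=V[\bar s]$ makes this legitimate, and where injectivity and continuity of $z\mapsto\bar t^z$ are immediate. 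When $\alpha$ is a limit, fix an increasing sequence $\langle\gamma_n:n<\omega\rangle$ cofinal in $\alpha$ and, for a successor ordinal $\gamma$ with $\gamma_n\le\gamma<\gamma_{n+1}$, let $w_\gamma(z)$ be $s_\gamma$ with its first $n$ bits overwritten by $z\restriction n$. Then below any fixed $\gamma<\alpha$ only finitely much of $z$ has been used, so $\bar t^z\restriction\gamma$ is computed from $\bar s\restriction\gamma$, $X$, and a finite object, hence lies in $V[\bar s\restriction\gamma]$ — exactly what the limit step needs; while across all of $[0,\alpha)$ every initial segment $z\restriction n$ is eventually recovered, so $z\mapsto\bar t^z$ is injective. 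In both cases, at the top stage we obtain $\bar t^z\in X\cap V[\bar s]$, which is $\Ss^{*\alpha}$-generic over $V$ (by Proposition~\ref{prop:genericity} if $\alpha$ is a limit, by the successor step otherwise), and $z\mapsto\bar t^z$ is a continuous injection of $2^\omega$ into $X$; its image is the desired perfect set of generics inside $X$. I expect the bookkeeping around the continuous dependence of $h_{\bar x}$ and $\bar t^z\restriction\gamma$ on their parameters to be routine, and the genuinely delicate step to be the reconciliation of "perfect‑set width" with "$\bar t^z\restriction\gamma\in V[\bar s\restriction\gamma]$" just described.
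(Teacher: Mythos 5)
Your proposal is correct and follows essentially the same route as the paper: branch along a fixed cofinal sequence so that each initial segment of $\bar t^z$ depends on only finitely many bits of $z$ (hence lies in $V[\bar s\restriction\gamma]$), use Lemma~\ref{lem:singlesacks} at successor coordinates and Proposition~\ref{prop:genericity} at limits; the paper packages the very same recursion as a map $\sigma\colon 2^{<\omega}\to\bigcup_n X\restriction\alpha_n$ whose branches form the perfect set. The only point to watch is that $\bar x\mapsto T_{X,\bar x}$ need not be continuous, so the family $h_{\bar x}$ can in general only be chosen Borel in $\bar x$; this is harmless, since for fixed $\bar s$ each coordinate map $z\mapsto\bar t^z(\gamma)$ factors through the finite datum $z\restriction n$ and is therefore locally constant in $z$ regardless.
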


\begin{proof}
The statement is vacuous or simply uninteresting when $\alpha = 0$. Also, as the forcing is homogeneous, we may assume without loss of generality that $\bar s \in X$.

For $\alpha = \beta +1$, consider the perfect tree $T = T_{X, \bar s \restriction \beta} \in V[\bar s \restriction \beta]$. Then note that in $V[\bar s] = V[\bar s \restriction (\beta +1)]$, there is a perfect set $P_0$ of reals $x \in [T] \setminus V[\bar s \restriction \beta]$.\footnote{Namely, if $f \colon [T] \to 2^\omega$ is a continuous bijection coded in $V[\bar s \restriction \beta]$, then $f^{-1}(\{s_\beta \oplus z : z \in 2^\omega\})$ is such a set.} Recall, by Lemma~\ref{lem:singlesacks}, that every new real in $V[\bar s]$ is generic for $\mathbb{S}$ over $V[\bar s \restriction \beta]$. Thus $P = \{ \bar s \restriction \beta^\frown x : x \in P_0 \} \subseteq X$ is a perfect set of generics over $V$.
    
    For $\alpha$ limit, fix a cofinal sequence $\langle \alpha_n : n \in \omega \rangle \in V$ in $\alpha$. In $V[\bar s]$, find a map $\sigma$ with domain $2^{<\omega}$ so that \begin{itemize}
        \item for each $n \in \omega$ and $\eta \in 2^n$, $\sigma(\eta) \in X \restriction \alpha_n$ is $\Ss^{*\alpha_n}$-generic over $V$, 
        \item for each $\eta \neq \nu$ in $2^{<\omega}$, $\sigma(\eta) \neq \sigma(\nu)$, 
        \item and for each $\eta \subseteq \nu$ in $2^{<\omega}$, $\sigma(\eta) \subseteq \sigma(\nu)$.
    \end{itemize}  For $x \in 2^\omega$, let us write $\sigma(x)$ for $\bigcup_{n \in \omega} \sigma(x \restriction n)$. The set $P = \{\sigma(x) : x \in 2^\omega \}$ is obviously a perfect subset of $X$ and by Proposition~\ref{prop:genericity} every member of $P$ in $V[\bar s]$ is generic for $\mathbb{S}^{*\alpha}$.
\end{proof}

\begin{lemma}\label{lem:injection}
    Let $\alpha < \omega_1$, $\bar p \in \mathbb{S}^{*\alpha}$ and $\dot x$ be a $\mathbb{S}^{*\alpha}$-name for an element of $\omega^\omega$ so that $\bar p \Vdash [\dot x]_V = [\dot{\bar s}^{\alpha}]_V$. Then there is $(X, \alpha)$ and a continuous injection $f \colon X \to \omega^\omega$ so that $i_\alpha(X, \alpha) \leq \bar p$ and $i_\alpha(X, \alpha) \Vdash \dot x = f(\dot{\bar s}^{\alpha})$.
\end{lemma}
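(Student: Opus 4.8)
The plan is to obtain $(X,\alpha)$ and the injection $f$ by a fusion/induction argument along the ordinals $\beta \le \alpha$, exactly mirroring the structure of Claim~\ref{claim:c1} in the proof of Proposition~\ref{prop:genericity}. The hypothesis $\bar p \Vdash [\dot x]_V = [\dot{\bar s}^\alpha]_V$ means in particular that $\bar p$ forces $\dot x \in V[\dot{\bar s}^\alpha]$ and $\dot{\bar s}^\alpha \in V[\dot x]$. By Fact~\ref{fact:degrees} applied inside each intermediate extension, $\bar p$ also forces $[\dot x \restriction \beta]_V = [\dot{\bar s}^\alpha \restriction \beta]_V$ for each $\beta < \alpha$ (more precisely, for the relevant cofinal sets $C_\beta$); using Lemma~\ref{lem:singlesacks} at each successor step, one gets an $\mathbb{S}^{*\beta}$-name $\dot f_\beta$ for a continuous injection $2^\omega \to 2^\omega$ with $\bar q \Vdash \dot x(\beta) = \dot f_\beta(\dot s_\beta)$, after passing to some $\bar q \le \bar p$. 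This is literally the conclusion of Claim~\ref{claim:c1} (with $\dot t_\beta$ replaced by $\dot x \restriction$ the appropriate coordinate), and I would invoke the same induction on $\gamma \le \alpha$, splitting into successor and limit cases, to produce a single $\bar q \le \bar p$ handling all $\beta < \alpha$ simultaneously. One first replaces $\dot x$ by a name $\dot{\bar x}$ for an element of $(2^\omega)^\alpha$ that is interdefinable with $\dot x$ over $V$ (via a fixed coding in $V$) so that the coordinatewise analysis makes sense.

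Next, using Fact~\ref{fact:basic}, I would pass to $(X_0,\alpha) \le \bar q$ (as conditions, via $i_\alpha$) carrying a continuous $h \colon X_0 \to (2^\omega)^\alpha$ with $i_\alpha(X_0,\alpha) \Vdash h(\dot{\bar s}^\alpha) = \dot{\bar x}$, and such that $i_\alpha(X_0,\alpha) \Vdash h(\dot{\bar s}^\alpha)(\beta) = \dot f_\beta(\dot s_\beta)$ for each $\beta$. Then take a countable $M \preceq H(\theta)$ containing all parameters and shrink to $(X,\alpha) \le (X_0,\alpha)$ with every $\bar x \in X$ being $\mathbb{S}^{*\alpha}$-generic over $M$ (Lemma~\ref{fact:idealized}). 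As in Proposition~\ref{prop:genericity}, since each $\dot f_\beta$ is an $\mathbb{S}^{*\beta}$-name in $M$, the value $h(\bar x)(\beta)$ depends only on $\bar x \restriction (\beta+1)$, and the interpretation $f_\beta$ of $\dot f_\beta$ along $\bar x \restriction \beta$ is a continuous injection; the same induction-on-$\beta$ argument that shows each $g_\beta$ is injective in Proposition~\ref{prop:genericity} shows here that $h$ itself is injective on $X$. Composing $h$ with the fixed $V$-coding that recovers $\dot x$ from $\dot{\bar x}$ yields the desired continuous injection $f \colon X \to \omega^\omega$ with $i_\alpha(X,\alpha) \le \bar p$ and $i_\alpha(X,\alpha) \Vdash \dot x = f(\dot{\bar s}^\alpha)$.

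The main obstacle, as in Proposition~\ref{prop:genericity}, is organizing the fusion so that the names $\dot f_\beta$ can all be extracted below a single condition $\bar q$: at limit stages $\gamma$ of cofinality $\omega$ one must interleave the countably many commitments along a cofinal sequence $\langle \delta_n : n \in \omega\rangle$, patching together $\bar q = \bigcup_n \bar q_n \restriction [\delta_n,\delta_{n+1})$, and at successor stages one invokes Lemma~\ref{lem:singlesacks} inside $V[\dot{\bar s}^\gamma \restriction \gamma]$ to decide $\dot f_\gamma$ in the tail. Everything else — injectivity of $h$, the condition-hood of $(X,\alpha)$ after shrinking, absoluteness of the relevant statements — is a routine replay of the arguments already given for Proposition~\ref{prop:genericity}, so I would keep the write-up short and point back to those claims rather than reproving them.
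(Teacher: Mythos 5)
Your plan has a genuine gap at its very first step: the reduction of $\dot x$ to a coordinatewise object $\dot{\bar x} \in (2^\omega)^\alpha$ satisfying $[\dot{\bar x} \restriction \beta]_V = [\dot{\bar s}^\alpha \restriction \beta]_V$ for all $\beta$. No fixed coding in $V$ can achieve this, and the claim is false for natural choices. For a concrete counterexample take $\alpha = 2$ and $\dot x = \dot s_1 \oplus \dot s_0$: this is a legitimate name of full degree, but under the even/odd decomposition its first coordinate is $s_1$, which has degree $[\bar s]_V$, not $[s_0]_V$; in particular it is not in $V[s_0]$, so Lemma~\ref{lem:singlesacks} cannot be applied at stage $0$ to produce $\dot f_0$. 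The fusion argument of Claim~\ref{claim:c1} works in Proposition~\ref{prop:genericity} only because the target $\bar t$ is \emph{by hypothesis} a sequence whose initial segments are $\mathbb{S}^{*\beta}$-generic, which pins down the degree of each coordinate $t_\beta$; a bare real $\dot x$ of full degree carries no such level-by-level structure, and manufacturing one is essentially equivalent to the injectivity you are trying to prove (the only canonical interdefinable sequence is $\dot{\bar s}^\alpha$ itself, which makes the argument circular).

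The intended argument is much shorter and avoids fusion entirely. Since $\bar p$ forces $\dot{\bar s}^\alpha \in V[\dot x]$, some $\bar q \leq \bar p$ forces that a fixed formula $\chi$ with parameters in $V$ defines $\dot{\bar s}^\alpha$ uniquely from $\dot x$; also fix a continuous $f$ with $\bar q \Vdash f(\dot{\bar s}^\alpha) = \dot x$. Now take a countable $M \preceq H(\theta)$ containing everything relevant and shrink to $(X,\alpha)$ consisting of $M$-generics (Lemma~\ref{fact:idealized}). If $f(\bar y) = f(\bar y') = x$, then $M[\bar y] = M[x] = M[\bar y']$, and inside this single model both $\bar y$ and $\bar y'$ are the unique object defined by $\chi$ from $x$, so $\bar y = \bar y'$. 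This definability-over-$M$ trick is the missing idea; the coordinatewise injectivity argument you imported from Proposition~\ref{prop:genericity} does not transfer.
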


\begin{proof}
    Since $\bar p$ forces that $\dot x$ and $\dot{\bar s}^{\alpha}$ have the same $V$-degree, there is an extension $\bar q \leq \bar p$ forcing that $\chi$ uniquely defines $\dot{\bar s}^{\alpha}$ from $\dot x$, for some formula $\chi$ with parameters in $V$. We may also assume that there is a continuous function $f$, with $\bar q \Vdash f(\dot{\bar s}^{\alpha}) = \dot x$. 
    
    Consider an elementary submodel $M$ containing all relevant information. Let $i_\alpha(X,\alpha) \leq \bar q$ so that every $\bar y \in X$ is generic over $M$. Then we claim that $f$ must be injective on $X$: If $f(\bar y) = f(\bar y') = : x$, then $M[\bar y] = M[x] = M[\bar y']$. But then both $\bar y$ and $\bar y'$ are defined, within the same model, by the formula $\chi$ from $x$, so $\bar y = \bar y'$.
\end{proof}

\begin{proof}[Proof of Theorem~\ref{thm:main} for $\cf(\lambda) > \omega$]
    Let $\bar s$ be generic and $A = \{ x \in \omega^\omega : \exists y\varphi(x,y) \}$, where $\varphi$ is $\Pi^1_2(r)$. By working relative to $V[r]$ and noting that any tail of the iteration still has length of uncountable cofinality, we may assume without loss of generality that $r \in V$.
    
    Let $x \in A \setminus V$. By Thm~\ref{thm:ref13} and Fact~\ref{fact:degrees}, we have that $[x]_V = [\bar s \restriction \alpha]_V$, for some $\alpha < \omega_1$, and that $V[\bar s \restriction \omega_1] \models \exists y \varphi(x, y)$. Then in $V[x] = V[\bar s \restriction \alpha]$, a condition relative to $\mathbb{S}^{*\omega_1}$ forces $\exists y\varphi(\check{x},y)$. This means that there is $(X, \beta) \in \tilde{\mathbb{S}}^{*\omega_1}$ and a continuous function $f \colon X \to \omega^\omega$, so that $i_{\omega_1}(X,\beta) \Vdash \varphi(\check x, f(\dot{\bar s}^\beta))$.   According to Lemma~\ref{lem:pi12force}, there is a $\Pi^1_2(r)$ formula $\psi(z,u,x)$ which expresses this forcing statement, where $z$ and $u$ are codes for $(X,\beta)$ and $f$. 

    Back in $V$, there is a condition $(X,\alpha) \in \tilde{\mathbb{S}}^{*\omega_1}$, a continuous injection $g \colon X \to \omega^\omega$ (see Lemma~\ref{lem:injection}) and continuous functions $g_0, g_1$, so that $$i_{\alpha}(X,\alpha) \Vdash_{\mathbb{S}^{*\alpha}} \psi(g_0(\dot{\bar s}^\alpha), g_1(\dot{\bar s}^\alpha), g(\dot{\bar s}^\alpha)).$$ By Proposition~\ref{prop:psofgen}, in $V[\bar s \restriction \alpha]$ there is a perfect set $P \subseteq X$ of $\mathbb{S}^{*\alpha}$-generics over $V$. In particular, in that model, for any $\bar t \in P$, $\psi(g_0(\bar t), g_1(\bar t), g(\bar t))$. By Shoenfield-absolutness, even for any $\bar t \in P^{V[\bar s]}$, $\psi(g_0(\bar t), g_1(\bar t), g(\bar t))$. 
    
    Work in $V[\bar s]$ now. As $g$ is injective, $g''P$ is a perfect subset of $\omega^\omega$. Let $x' \in g''P$ be arbitrary, say $x' \in V[\bar s \restriction \gamma]$, for some $\gamma < \lambda$. Then, relative to $V[\bar s \restriction \gamma]$, there is a condition in $\tilde{\mathbb{S}}^{*\omega_1}$ forcing $\exists y\varphi(\check{x}',y)$. Since a generic over $V[\bar s \restriction \gamma]$ containing such condition exists and applying Shoenfield-absoluteness one more time, we find that $V[\bar s] \models \exists y\varphi(x',y)$. All together, we have shown that $g''P \subseteq A$. 
\end{proof}

The proof also shows the following:

\begin{thm}
    Let $\lambda$ be an ordinal of uncountable cofinality. Let $\bar s$ be $\mathbb{S}^{*\lambda}$-generic over $V$. In $V[\bar s]$, suppose that $A$ is $\Sigma^1_3(r)$ and $x \in A \setminus V[r]$. Then $A$ contains reals in every degree above $x$.
\end{thm}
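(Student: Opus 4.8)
The plan is to run the proof of Theorem~\ref{thm:main} (the case $\cf(\lambda)>\omega$) essentially verbatim, but feeding it a well-chosen ``base point''. First reduce, exactly as there, to the case $r\in V$, so that $x\in A\setminus V$ and $A=\{y:\exists z\,\varphi(y,z)\}$ with $\varphi$ a $\Pi^1_2(r)$-formula, $r\in V$. Using Corollary~\ref{cor:abs13}, Theorem~\ref{thm:ref13} and Fact~\ref{fact:degrees}, I would fix some $x'\in A\setminus V$ with $x'\in V[\bar s\restriction\omega_1]$ and $[x']_V\le[x]_V$: if $x\in V[\bar s\restriction\omega_1]$ take $x'=x$ (then $V[\bar s\restriction\omega_1]\models\varphi(x')$ by $\Sigma^1_3$-absoluteness); otherwise $x$ witnesses a new real satisfying $\varphi$ in $V[\bar s]$, so by Theorem~\ref{thm:ref13} some such witness $x'$ already lies in $V[\bar s\restriction\omega_1]$, and since then $x'\in V[\bar s\restriction\omega_1]$ while $x\notin V[\bar s\restriction\omega_1]$, the linearity of the degree structure (Fact~\ref{fact:degrees}) forces $[x']_V<[x]_V$. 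Write $[x']_V=[\bar s\restriction\alpha]_V$ with $\alpha<\omega_1$ not a limit of uncountable cofinality; by Fact~\ref{fact:degrees}, $[\bar s\restriction\alpha]_V$ is the top $V$-degree of $V[\bar s\restriction\alpha]$. Since every $V$-degree above $x$ is a $V$-degree $\ge[\bar s\restriction\alpha]_V$, it suffices to find inside $A$ a real of each $V$-degree $d\ge[\bar s\restriction\alpha]_V$.

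Running the argument of Theorem~\ref{thm:main} with $x'$ in place of $x$ yields: a condition $(X,\alpha)\in\tilde{\mathbb{S}}^{*\omega_1}$ of support $\alpha$, a continuous injection $g\colon X\to\omega^\omega$ with code in $V$ (so, $X$ being a compact subset of $(2^\omega)^\alpha$, the inverse $g^{-1}\colon g''X\to X$ is continuous and also coded in $V$), continuous $g_0,g_1$, and a perfect tree $T_P\in V[\bar s\restriction\alpha]$ with $[T_P]\subseteq X$ (the tree of the perfect set of generics provided by Proposition~\ref{prop:psofgen}), such that $i_\alpha(X,\alpha)\Vdash_{\mathbb{S}^{*\alpha}}\psi(g_0(\dot{\bar s}^\alpha),g_1(\dot{\bar s}^\alpha),g(\dot{\bar s}^\alpha))$, where $\psi$ is the $\Pi^1_2(r)$-formula of Lemma~\ref{lem:pi12force}. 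As in that proof, ``$\forall\bar t\in[T_P]\,\psi(g_0(\bar t),g_1(\bar t),g(\bar t))$'' holds in $V[\bar s\restriction\alpha]$, is $\mathbf{\Pi}^1_2$, and so by Shoenfield absoluteness persists to $V[\bar s]$; and, again as there, for $\bar t\in[T_P]^{V[\bar s]}$ the statement $\psi(g_0(\bar t),g_1(\bar t),g(\bar t))$ asserts that a condition of $\mathbb{S}^{*\omega_1}$ forces $\exists z\,\varphi(\check{y},z)$ with $\check{y}$ naming $g(\bar t)$, whence (working over any $V[\bar s\restriction\gamma]$ containing $g(\bar t)$, using homogeneity of $\mathbb{S}^{*\omega_1}$ and one more application of Shoenfield absoluteness) $g(\bar t)\in A$. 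Finally, since $g,g^{-1}\in V$ and $\bar t\in X$, we have $g(\bar t)\in V[\bar t]$ and $\bar t=g^{-1}(g(\bar t))\in V[g(\bar t)]$, i.e.\ $[g(\bar t)]_V=[\bar t]_V$. So it remains to show that $[T_P]^{V[\bar s]}$ contains a branch of every $V$-degree $\ge[\bar s\restriction\alpha]_V$.

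For this, fix a $V$-degree $d\ge[\bar s\restriction\alpha]_V$. If $d=[\bar s\restriction\alpha]_V$, it is witnessed by $x'$ (or one arranges $\bar s\restriction\alpha\in[T_P]$ in Proposition~\ref{prop:psofgen}), so assume $d>[\bar s\restriction\alpha]_V$. By Fact~\ref{fact:degrees} pick a real $w\in V[\bar s]$ with $[w]_V=d$; then $w\notin V[\bar s\restriction\alpha]$ (as $d$ exceeds its top degree) and $V[\bar s\restriction\alpha]\subseteq V[w]$. Fix in $V[\bar s\restriction\alpha]$ a homeomorphism $h\colon2^\omega\to[T_P]$ (possible since $[T_P]$ is perfect) and set $\bar t:=h(w)\in[T_P]^{V[\bar s]}$. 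Since $h\in V[\bar s\restriction\alpha]\subseteq V[w]$, we get $\bar t\in V[w]$, so $[\bar t]_V\le d$. On the other hand, for any inner model $N'$ with $V[\bar s\restriction\alpha]\subseteq N'$ and $w\notin N'$ one has $\bar t\notin N'$, since otherwise $w=h^{-1}(\bar t)\in N'$ (as $h^{-1}\in V[\bar s\restriction\alpha]\subseteq N'$). Applying this with $N'=V[\bar s\restriction\alpha]$ gives $[\bar t]_V\not\le[\bar s\restriction\alpha]_V$, hence (linearity) $[\bar t]_V>[\bar s\restriction\alpha]_V$; applying it with $N'=V[w_e]$ for $w_e$ a real of an arbitrary $V$-degree $e\in[\,[\bar s\restriction\alpha]_V,d)$ (note $V[\bar s\restriction\alpha]\subseteq V[w_e]$ and the top $V$-degree of $V[w_e]$ is $e$, so $w\notin V[w_e]$) gives $[\bar t]_V\not\le e$ for every such $e$. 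Since the $V$-degrees are well-ordered, the only option is $[\bar t]_V=d$. Thus $g(\bar t)\in A$ has $V$-degree $d$, completing the argument.

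The step I expect to be the main obstacle is this last one: that a perfect tree living in $V[\bar s\restriction\alpha]$ already catches a branch of every $V$-degree above $[\bar s\restriction\alpha]_V$. The subtlety is that pushing a real $w$ forward through a $V[\bar s\restriction\alpha]$-coded injection only preserves ``lying outside $N'$'' for models $N'\supseteq V[\bar s\restriction\alpha]$, not the precise $V$-degree of $w$; it is exactly the linearity and well-foundedness of the Sacks degree structure (Fact~\ref{fact:degrees}) that upgrades ``not below any $e<d$ and strictly above $[\bar s\restriction\alpha]_V$'' to ``equal to $d$''. A secondary point that needs care is the opening reduction: one must verify that the reflected witness $x'\in V[\bar s\restriction\omega_1]$ can be taken of $V$-degree $\le[x]_V$, so that the degrees we produce in $A$ (all those $\ge[x']_V$) really do include every degree above $x$.
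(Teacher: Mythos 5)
Your proof is correct and follows essentially the same route as the paper's: produce (via the proof of Theorem~\ref{thm:main}) a perfect subset of $A$ coded in a model of $V$-degree at most $[x]_V$, and then use an injection/homeomorphism coded there together with the linearity and well-ordering of the Sacks degree structure to pull each degree $d$ above $[x]_V$ back to an element of that perfect set of degree exactly $d$. The paper compresses your ``escape from every intermediate model'' step into a two-line join argument (if $y'<_V y$ then $y\le_V y'\oplus T\le_V y'\oplus x<_V y$), and your explicit reduction to a reflected witness $x'$ of degree $\le[x]_V$ spells out a point the paper leaves implicit, but the underlying argument is the same.
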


\begin{proof}
    The proof of Theorem~\ref{thm:main} produces a perfect subset $[T] \subseteq A$ with $T \in V[x]$. For any $y \geq_V x$, there is a branch $y'$ of $T$ in $V[y]$ so that $y \leq_V y' \oplus T$. If $y' <_V y$, then also $y' \oplus x = \max_{\leq_V}(y', x) <_V y$. But then $y \leq_V y' \oplus T \leq_V y' \oplus x <_V y$, posing a contradiction.
\end{proof}

The minimal degree in a $\Pi^1_2$-set can certainly appear after the first Sacks real. For instance, forcing over $L$, consider the $\Pi^1_2$ formula expressing that $x$ is of the form $x_0 \oplus x_1$ and $x_1 \not\leq_L x_0 \not\leq_L 0$.

\begin{proof}[Proof of Theorem~\ref{thm:mainL}]
    Let $\bar s = \langle s_\alpha : \alpha < \lambda \rangle$ be $\mathbb{S}^{*\lambda}$-generic over $L$. We already know that $L[\bar s] \models \MS(\Sigma^1_3, L)$, when $\lambda = 1$ or $\lambda$ is a limit of uncountable cofinality.

 Only assuming $\lambda > 1$, $\MS(\Pi^1_3)$ fails because the set $A$ of minimal reals over $L$ is $\Pi^1_3$ but contains no perfect subset. Namely, suppose that $P \in L[\bar s]$ is a perfect subset of $A$. Then $L[\bar s]\models \forall x \in P ( s_1 \not\leq_L x)$. Add a single additional Sacks real $s_\lambda$ over $V[\bar s]$. In $L[\bar s][s_\lambda]$, there is a new real $x$ in $P$ of degree $s_\lambda$ and by the degree structure, $s_1 \leq_L x$. On the other hand, by Shoenfield absoluteness, $s_1 \not\leq_L x$ should hold -- contradiction.\footnote{Another way to see that $A$ has no perfect subset is through the Groszek-Slaman Theorem, which shows that if $2^\omega \setminus L[s_0] \neq \emptyset$, then no perfect set can be contained in $L[s_0]$, see \cite[Theorem 2.4]{Groszek1998}}.

    If $\lambda > 1$ is not a limit of uncountable cofinality, then there is a maximal degree $[a]_L$ in $V[\bar s]$. The set $A$ of reals of non-maximal degree is $\Sigma^1_3$ and not contained in $L$. Suppose that there is a perfect set $P\subseteq A$ in $V[\bar s]$. As before, we obtain a contradiction from $\forall x \in P ( a \not\leq_L x)$.
\end{proof}

\begin{remark}
    The classical Mansfield-Solovay Theorem can be strengthened to say that a $\Sigma^1_2(r)$ set $A$ is either contained in $L[r]$ or contains a perfect set which is additionally \emph{coded in $L[r]$}. Clearly, we cannot ever hope to generalize this even to $\Pi^1_2$ sets, as witnessed by the set of non-constructible reals. Nevertheless, as mentioned above, our argument shows that (when forcing over $L$) the perfect set can be found in $L[r,x]$, where $x$ is any member of $A$. Of course, Shoenfield's absolutness theorem shows that a non-empty $\Sigma^1_2(r)$ set already contains an element in $L[r]$ itself.
\end{remark}

Finally, let us repeat the argument for Marczeswki measurability from \cite{BrendleLoewe}.

\begin{proof}[Proof of Theorem~\ref{thm:marcz}]
    In the forcing extension, let $A$ be ${\Sigma}^1_3(r)$ and $P$ a perfect set. The set $P \cap A$ is a ${\Sigma}^1_3(r, x)$ set, where $x$ is a code for $P$. If $P\cap A \subseteq V[r,x]$ simply let $P' \subseteq P$ be a perfect set disjoint from $V[r,x]$, which can easily be shown to exist. The other option is that $P \cap A$ outright contains a perfect subset $P'$.
\end{proof}

\section{\texorpdfstring{$\MS(\Sigma^1_n)$ vs $\MS(\Pi^1_n)$}{MS(Σ1n) vs MS(Π1n)}}\label{sec:Kanovei}

We now explain how one can separate the Mansfield-Solovay Theorem at the different levels of the projective hierarchy. We base ourselves on the more modern exposition of the ideas in \cite{Kanovei1978} given in \cite{Kanovei2024}. In \cite{Kanovei2024}, for each $n \geq 3$, a forcing notion $\mathbb{P}(n)$ is introduced that adds a generic real $a \in 2^\omega$ over $L$ that is $\Delta^1_{n+1}$ definable, and where $\{ a\}$ is a $\Pi^1_n$ singleton (see the paragraph right before Section 11), while every $\Sigma^1_n$ definable real is constructible. We will not define the forcing in full detail here, as it is beyond the scope of our paper and we only need a few key properties that can be found explicitly in \cite{Kanovei2024}. Nevertheless, to the reader's convenience, let us give a rough sketch.

Work in $L$. Just like Jensen's forcing notion, $\mathbb{P}(n)$ consists of (a certain collection of) perfect subtrees of $2^{<\omega}$ ordered under inclusion. First, a tree $\mathbf{J}_{<\omega_1}$ of so called \emph{Jensen sequences} is defined, whereby each element of $\mathbf{J}_{<\omega_1}$ is a sequence $\langle P_\xi : \xi < \alpha \rangle$, $\alpha < \omega_1$, of countable sets $P_\xi$ of perfect trees. Every branch $\langle P_\xi : \xi < \omega_1 \rangle$ defines a forcing notion $\mathbb{P} = \bigcup_{\xi < \alpha} P_\xi$. The notion of a Jensen sequence captures the essential properties of Jensen's original recursive construction. The following is shown outright for each forcing notion $\mathbb{P}$ obtained in such a way.

\begin{lemma}[{\cite[Lemma 9, 10, 11]{Kanovei2024}}]\label{lem:K1}
    Let $G$ be $\mathbb{P}$-generic over $L$. Then $\bigcap_{T \in G} [T]$ is a singleton $\{ a\}$, so that $L[G] = L[a]$ and $a$ is minimal, i.e. $[0]_L$ and $[a]_L$ are the only degrees in $L[G]$. Moreover, for any real $x \in 2^\omega \cap L[a]$, there is a continuous function $f \colon 2^\omega \to 2^\omega$ coded in $L$, so that $f(a) = x$.
\end{lemma}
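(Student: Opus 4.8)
The statement is quoted directly from \cite{Kanovei2024}, so a complete argument belongs to the three lemmas cited there; here I only indicate the shape of the proof, which follows the familiar pattern for Jensen-style forcing notions. \emph{The singleton.} First I would check that for each $n \in \omega$ the set $D_n$ of $T \in \mathbb{P}$ all of whose branches agree on coordinate $n$ is dense: given $T \in \mathbb{P}$, pick a splitting node of length greater than $n$ and pass to the subtree of $T$ below one of its two immediate successors, using that the tree of Jensen sequences is arranged so that $\mathbb{P}$ is closed under this operation. A generic $G$ then meets every $D_n$, so $\bigcap_{T \in G} [T]$ has at most one point; being a decreasing intersection of non-empty compact sets it is non-empty, hence a singleton $\{a\}$, and $a \in L[G]$ is immediate.

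\emph{Reversibility, $L[G] = L[a]$.} It remains to recover $G$ from $a$ inside $L$, i.e.\ to show $G = \{T \in \mathbb{P} : a \in [T]\}$. The inclusion $\subseteq$ is trivial; for $\supseteq$ one invokes that $\mathbb{P} \in L$ is, by construction, ``thin'': if $S, T \in \mathbb{P}$ are incompatible then $[S] \cap [T]$ contains $[R]$ for no $R \in \mathbb{P}$, and the $\diamondsuit$-sealing of all $L$-definable maximal antichains lets the corresponding genericity argument conclude that a $T$ with $a \in [T]$ cannot be incompatible with any member of $G$.

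\emph{Minimality and continuous reading.} Both remaining clauses I would derive from a single fusion argument, entirely parallel to Lemma~\ref{lem:singlesacks}. Given a $\mathbb{P}$-name $\dot x \in L$ for an element of $2^\omega$ and a condition $T$, an Axiom-A-style fusion performed inside $L$ — producing a tree that again lies in $\mathbb{P}$, since the Jensen sequences are closed under the fusion operation — yields $T' \leq T$ in $\mathbb{P}$ and a continuous $g \colon [T'] \to 2^\omega$ coded in $L$ with $T' \Vdash \dot x = g(\dot a)$. A genericity argument then produces such a $g$ with $g(a) = x$, which is the last clause. If in addition $x \notin L$, shrinking $T'$ so that all of its branches are generic over a fixed countable elementary submodel forces $g$ to be injective (two branches with equal $g$-image generate the same model, hence coincide), so $a \in L[x]$ and $L[a] = L[x]$; as $x$ was an arbitrary non-constructible real of $L[a]$, this shows that $[0]_L$ and $[a]_L$ are the only degrees.

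The main obstacle — and the genuine content of the cited lemmas — is the construction of the tree of Jensen sequences itself: one must arrange that along every branch the resulting $\mathbb{P}$ is at once rich enough to contain the bit-deciding and fusion subtrees used above and thin enough that all of its $L$-definable maximal antichains get sealed. This is precisely the delicate balance in Jensen's original argument, here carried out with the additional projective bookkeeping that makes $a$ a $\Delta^1_{n+1}$ real whose singleton is $\Pi^1_n$ while keeping every $\Sigma^1_n$ real constructible.
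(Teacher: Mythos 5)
The paper offers no proof of this lemma at all: it is imported verbatim from Kanovei--Lyubetsky (Lemmas 9, 10, 11 of \cite{Kanovei2024}), so there is no in-paper argument to compare against. Your outline is the standard Jensen-forcing template, and you correctly locate the genuine content in the construction of $\mathbf{J}_{<\omega_1}$ rather than in the three bullet points. Two caveats, though. First, $\mathbb{P}$ is not literally ``closed under the fusion operation'': the trees produced by fusing members of $\bigcup_{\eta<\xi}P_\eta$ are in general not yet in the forcing, and the construction instead runs a bookkeeping over all $\omega_1$ stages (using $\CH$ in $L$ and the ccc) so that for each name $\dot x$ some stage $\xi$ adds the fusion subtrees handling $\dot x$. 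Your closing paragraph gestures at this, but the phrase as written overstates what the Jensen sequences give you.

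Second, and more seriously, your justification of injectivity in the minimality step is circular. From $g(b)=g(b')=x$ with $b,b'$ generic over $M$ you conclude $M[b]=M[x]=M[b']$, but $M[b]=M[x]$ requires $b\in M[x]$, i.e.\ exactly the minimality you are trying to establish. (The elementary-submodel trick works in Lemma~\ref{lem:injection} of this paper only because there one \emph{assumes} the name has the same $V$-degree as the generic, so a defining formula $\chi$ is available.) The correct route is the one in Lemma~\ref{lem:singlesacks}: assuming $T\Vdash \dot x\notin L$, no extension decides $\dot x$ completely, so at each splitting step of the fusion one can choose the two successor conditions to force incompatible initial segments of $\dot x$; injectivity of the reading function is then built into the fusion itself, and $a=g^{-1}(x)\in L[x]$ follows. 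With that repair (and granting the closure properties the cited construction supplies), your sketch matches the intended argument.
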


For each $n$, $\mathbb{P}(n)$ is obtained from a branch $\bar P$ through $\mathbf{J}_{<\omega_1}$ that is $\Delta_{n-1}$ definable over $H(\omega_1)$ and such that $\bar P$ \emph{solves} every $\mathbf{\Sigma}_{n-2}$ definable $D \subseteq \mathbf{J}_{<\omega_1}$: there is $\alpha < \omega_1$ so that, either $\bar P \restriction \alpha \in D$, or no extension of $\bar P \restriction \alpha$ in $\mathbf{J}_{<\omega_1}$ is in $D$.

Let $\dot a$ be a name for the generic real of $\mathbb{P}(n)$. The crucial property is the following: 

\begin{lemma}[{\cite[Lemma 16, Theorem 5]{Kanovei2024}}]\label{lem:K2}
    Let $G$ be $\mathbb{P}(n)$-generic over $L$ and $\varphi$ a $\mathbf{\Pi}^1_{n-1}$-formula with parameters in $L$. Then $V[G] \models \varphi(\dot a^G)$ if and only if there is a condition $T \in G$, so that $T \Vdash_{\mathbb{S}} \varphi(\dot s)$.
\end{lemma}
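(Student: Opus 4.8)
Since the lemma is quoted verbatim from \cite{Kanovei2024}, the honest task is to reconstruct the argument there. Write $\mathbb{P}$ for $\mathbb{P}(n)$ and recall from Lemma~\ref{lem:K1} that $\mathbb{P}\subseteq\mathbb{S}$, that the generic $a$ is minimal over $L$ (in particular $a\notin L$), and that every real of $L[a]$ equals $g(a)$ for a continuous $g\in L$; a routine density argument then yields \emph{continuous reading of names} for $\mathbb{P}$: below any $T\in\mathbb{P}$, a dense set of conditions force a given $\mathbb{P}$-name $\dot x$ for a real to equal $g(\dot a)$ for some continuous $g\in L$. I would split the lemma into two claims about a $\mathbf{\Pi}^1_{n-1}(L)$-formula $\varphi$. \emph{Transfer}: for every $T\in\mathbb{P}$, if $T\Vdash_{\mathbb{S}}\varphi(\dot s)$ then $T\Vdash_{\mathbb{P}}\varphi(\dot a)$. \emph{Density}: the set $D_\varphi=\{T\in\mathbb{P}:T\Vdash_{\mathbb{S}}\varphi(\dot s)\}\cup\{T\in\mathbb{P}:T\Vdash_{\mathbb{P}}\neg\varphi(\dot a)\}$ is dense in $\mathbb{P}$. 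Granting both, the lemma is short: the direction $(\Leftarrow)$ is exactly Transfer, and for $(\Rightarrow)$, if $V[G]\models\varphi(a)$ fix $T_0\in G$ with $T_0\Vdash_{\mathbb{P}}\varphi(\dot a)$; then $D_\varphi$ is dense below $T_0$, where the second disjunct is impossible, so the condition in $G\cap D_\varphi$ lying below $T_0$ witnesses the right-hand side.

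Both Transfer and Density I would prove by a simultaneous induction, not on $n$ but on the projective complexity $k\le n-1$, establishing them for all $\mathbf{\Pi}^1_k(L)$-formulas at once. For Transfer at the bottom level one argues from the classical Mansfield--Solovay theorem for $\mathbf{\Sigma}^1_1$: if $\varphi$ is $\mathbf{\Pi}^1_1(L)$ and $T\Vdash_{\mathbb{S}}\varphi(\dot s)$, then the $\mathbf{\Sigma}^1_1(L)$ set $[T]\setminus\{x:\varphi(x)\}$ is thin, hence contained in $L$, so every $\mathbb{P}$-generic through $T$, being a non-constructible branch of $T$, satisfies $\varphi$; i.e. $T\Vdash_{\mathbb{P}}\varphi(\dot a)$. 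The inductive step for Transfer is the clean part: given $\varphi=\forall y\,\theta(\cdot,y)$ in $\mathbf{\Pi}^1_{k+1}(L)$ with $T\in\mathbb{P}$, $T\Vdash_{\mathbb{S}}\varphi(\dot s)$, if $T\not\Vdash_{\mathbb{P}}\varphi(\dot a)$ then, using continuous reading of names, some $T'\le T$ in $\mathbb{P}$ and continuous $g\in L$ have $T'\Vdash_{\mathbb{P}}\neg\theta(\dot a,g(\dot a))$; applying Density at level $k$ to the $\mathbf{\Pi}^1_k(L)$ formula ``$\neg\theta(\cdot,g(\cdot))$'' below $T'$ — where the second disjunct is ruled out because $T'$ forces $\dot a$ into that set — yields $T''\le T'$ in $\mathbb{P}$ with $T''\Vdash_{\mathbb{S}}\neg\theta(\dot s,g(\dot s))$, contradicting $T\Vdash_{\mathbb{S}}\forall y\,\theta(\dot s,y)$.

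For Density at level $k$ one uses the defining property of the branch $\bar P$ through $\mathbf{J}_{<\omega_1}$ from which $\mathbb{P}$ is built: $\bar P$ solves every $\mathbf{\Sigma}_{n-2}$-definable $D\subseteq\mathbf{J}_{<\omega_1}$, and is itself $\Delta_{n-1}$-definable over $H(\omega_1)$. Given $\varphi\in\mathbf{\Pi}^1_k(L)$ and $T_0\in\mathbb{P}$, take $D$ to be (a code for) the set of Jensen sequences whose associated partial forcing already contains a tree $T\le T_0$ with $T\Vdash_{\mathbb{S}}\varphi(\dot s)$: if $\bar P$ solves $D$ positively, such a $T$ enters $\mathbb{P}$; if negatively, then no condition below $T_0$ ever $\mathbb{S}$-forces $\varphi(\dot s)$, and one concludes — via Transfer at level $k$ together with the classical-type fact that a $\mathbf{\Pi}^1_k(L)$ set over which every condition below $T_0$ is perfectly avoidable is $\mathbb{P}$-avoidable below $T_0$ — that some $T^*\le T_0$ in $\mathbb{P}$ forces $\neg\varphi(\dot a)$. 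The delicate ingredient is that this $D$ must genuinely be $\mathbf{\Sigma}_{n-2}$-definable over $H(\omega_1)$; verifying this requires a forcing-complexity computation, generalizing Lemma~\ref{lem:pi12force} up the projective hierarchy, showing that ``$T\Vdash_{\mathbb{S}}\varphi(\dot s)$'' for $\varphi\in\mathbf{\Pi}^1_k$ sits at the appropriate level — which is exactly where Shoenfield absoluteness at the base and the inductive bound $k\le n-1$ are used.

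I expect the main obstacle to be precisely this bookkeeping: threading Transfer and Density through a single induction in which each level feeds the next, and pinning down that all the forcing statements and thinning requirements that $\bar P$ is called on to solve really lie at complexity $\mathbf{\Sigma}_{n-2}$ over $H(\omega_1)$, so that the only non-elementary inputs — the $\Delta_{n-1}$-definability and the solving property of $\bar P$ — can legitimately be invoked. The genuinely hard combinatorial core, namely that the Jensen-style generic behaves over $L$ like a Sacks generic as far as sets coded in $L$ of the relevant complexity are concerned, I would take as given from \cite[Lemmas 9--11]{Kanovei2024}.
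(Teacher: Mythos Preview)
The paper does not prove this lemma at all: it is stated with the citation \cite[Lemma 16, Theorem 5]{Kanovei2024} and then used as a black box in the proof of Theorem~\ref{thm:kanovei}. There is therefore no ``paper's own proof'' to compare your proposal against. Your opening sentence already recognizes this, and what you have written is a plausible outline of how the Kanovei argument is organized (Transfer plus Density, proved by induction on the projective level, with the solving property of the branch $\bar P$ supplying the needed density below the cutoff $n-1$), but the present paper simply imports the result and offers nothing further.
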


\begin{proof}[Proof of Theorem~\ref{thm:kanovei}]
   We already know that $\MS(\Pi^1_2)$ can fail consistently. So let $n\geq 3$. Let $G$ be $\mathbb{P}(n)$-generic over $L$. Since there is a $\Pi^1_n$ singleton, $L[G] \models \neg \MS(\Pi^1_n)$. Now let $A = \{x \in \omega^{\omega}: \exists y\varphi(x,y) \}$, where $\varphi$ is $\Pi^1_{n-1}(r)$, $r \in L[G]$. By the minimality of the extension, the only relevant case is when $r \in L$ and there is $x \in A \setminus L$. Then there is $y \in \omega^\omega$ so that $L[G] \models \varphi(x,y)$. By Lemma~\ref{lem:K1}, there are continuous functions $f, g\in L$ such that $x = f(\dot a^G)$ and $y = g(\dot a^G)$. According to Lemma~\ref{lem:K2} above, there is $T \in G$ so that $T \Vdash_{\mathbb{S}} \varphi(f(\dot s), g(\dot s))$.\footnote{$\varphi(f(\dot s), g(\dot s))$ can of course be rewritten as a $\mathbf{\Pi}^1_{n-1}$ property of only $\dot s$, using codes for $f$ and $g$ in $L$.} Recall that in a Sacks forcing extension, every new element of $[T]$ is a Sacks real with $T$ in its generic filter. There is a Borel function $h$ coded in $L$ (in fact even continuous), so that whenever $s$ is Sacks generic over $L$, $h(s)$ is a perfect subtree of $T$ with $[h(s)] \cap L = \emptyset$. Just fix a homeomorphism $\phi \colon [T] \to 2^\omega$ in $L$, andar let $h(s)$ be such that $[h(s)] = \phi^{-1}[\{ s \oplus z : z \in 2^{\omega}\}]$.
   
   Thus, we find that $$T \Vdash_{\mathbb{S}} \forall x \in [h(\dot s)] \varphi(x,g \circ f^{-1}(x)).$$
   The forced statement can be expressed by a $\mathbf{\Pi}^1_{n-1}$-formula using parameters in $L$. So applying Lemma~\ref{lem:K2} once again, we find that $[h(\dot a^G)]$ is a perfect subset of $A$ in $L[G]$.
\end{proof}

Of course, it would also be interesting to separate these in models not of the form $L[a]$ for a real $a$.

\section{Open questions}

Recall that the side-by-side Sacks model is obtained using a countable support product of Sacks forcing. The degree structure of this model is more difficult to describe than that of the iteration, and it has recently been shown that it is rigid (see \cite{Notaro}).

\begin{quest}
    What happens in the side-by-side Sacks model?
\end{quest}

We have noted in the introduction that adding uncountably many Cohen reals results in a model of $\MS(\OD)$. On the other hand, the situation of a single Cohen real simply eludes us. 

\begin{quest}\label{q:Cohen}
    Let $c$ be a Cohen real over $L$. Does $L[c] \models \MS(\OD)$?
\end{quest}

One may show quite easily that $L[c] \models \MS(\Sigma^1_3)$ using the existence of a perfect set of Cohen reals in $L[c]$ and Shoenfield's absoluteness for $\mathbf\Pi^1_2$-formulas. On the other hand, consider that in $L[c]$, the set $A$ of Cohen reals of degree $c$ (that is, the maximal degree) is a $\Pi^1_3$ set disjoint from $L$. One can note that the existence of a perfect subset of $A$ would already entail $\MS(\OD)$. Thus, there are only two options: either $L[c] \models \neg \MS(\Pi^1_3)$ or $L[c] \models \MS(\OD)$.

We haven't touched the following questions which may have similar answers as in Section~\ref{sec:Kanovei}.

\begin{quest}
    Let $n > 2$. Is it consitent with ``$\forall a \in \omega^\omega (V \neq L[a])$" that $\MS(\Sigma^1_n)$ while $\neg \MS(\Pi^1_n)$? What about $\MS(\Pi^1_n)$ versus $\MS(\Sigma^1_n)$? $\MS(\Delta^1_n)$ versus $\neg \MS(\Sigma^1_n) \wedge \neg \MS(\Pi^1_n)$?
\end{quest}

\subsection*{Acknowledgements}
The author was supported in part by a UKRI Future Leaders Fellowship [MR/T021705/2]. This research was funded in part by the Austrian Science Fund (FWF) [10.55776/ESP5711024]. For open access purposes, the author has applied a CC BY public copyright license to any author-accepted manuscript version arising from this submission. No data are associated with this article. 

\bibliographystyle{plain}

\end{document}